\newcommand{\tf}{time-frequency}
\newcommand{\tfs}{time-frequency shift}
\newtheorem{theorem}{Theorem}[section]
\newtheorem{lemma}[theorem]{Lemma}
\newtheorem{corollary}[theorem]{Corollary}
\newtheorem{proposition}[theorem]{Proposition}
\newtheorem{definition}[theorem]{Definition}
\newtheorem{example}[theorem]{Example}
\newtheorem{remark}[theorem]{Remark}
\newcommand{\beqa}{\begin{eqnarray*}}
\newcommand{\eeqa}{\end{eqnarray*}}
\DeclareMathOperator*{\Lift}{Lift}
\newcommand{\field}[1]{\mathbb{#1}}
\newcommand{\bR}{\field{R}}        %  real numbers
\newcommand{\bC}{\field{C}}        %  complex numbers
\def\la{\lambda}
\def\cF{\mathcal{F}}              % Calligraphic Letters
\def\cS{\mathcal{S}}
\def\cD{\mathcal{D}}
\def\cE{\mathcal{E}}
\def\cG{\mathcal{G}}
\def\cM{\mathcal{M}}
\def\cK{\mathcal{K}}
\def\cA{\mathcal{A}}
\def\cI{\mathcal{I}}
\def\rd{\bR^d}
\def\rdd{{\bR^{2d}}}
\def\lrd{L^2(\rd)}
\def\R{\right)}
\def\<{\left<}
\def\>{\right>}
\def\mv1{M_v^1}
\def\mpq{M^{p,q}}
\def\phas{(x,\xi )}
\def\mn{(m,n)}
\def\mn'{(m',n')}
\newcommand{\norm}[1]{\lVert#1\rVert}
\def\R{\mathbb{R}}
\def\Ren{\mathbb{R}^d}
\def\sch{\mathcal{S}}
\def\Sn2{S_{2}(L^{2}(\Ren))}
\def\S1{S_{1}(L^{2}(\Ren))}
\def\sig00{\sigma_{0,0}}
\def\la{\langle}
\def\ra{\rangle}
\begin{document}
\begin{abstract} 
We introduce new frames, called \emph{metaplectic Gabor frames}, as natural generalizations of Gabor frames in the framework of metaplectic Wigner distributions, cf. \cite{CR2021, CR2022, CGR2022, Giacchi,Zhang21bis, Zhang21}. Namely, we develop the theory of metaplectic atoms in a full-general setting and prove an inversion formula for metaplectic Wigner distributions on $\rd$. Its discretization provides  metaplectic Gabor frames. 

Next, we deepen the understanding of the so-called shift-invertible metaplectic Wigner distributions, showing that they can be represented, up to chirps,  as rescaled short-time Fourier transforms. 
%To derive the expression of the adjoint of metaplectic atoms, we also derive the explicit intertwining relationship between metaplectic operators and complex conjugation. 
As an application,  we derive a new characterization of modulation and Wiener amalgam spaces. Thus, these metaplectic distributions (and related frames) provide meaningful definitions of local frequencies and can be used to measure effectively the local frequency content of signals. 
\end{abstract}

\title[Metaplectic Gabor Frames and Symplectic Analysis of spaces]{Metaplectic Gabor Frames and Symplectic Analysis of time-frequency spaces}

\author{Elena Cordero}
\address{Universit\`a di Torino, Dipartimento di Matematica, via Carlo Alberto 10, 10123 Torino, Italy}
\email{elena.cordero@unito.it}
\author{Gianluca Giacchi}
\address{Università di Bologna, Dipartimento di Matematica,  Piazza di Porta San Donato 5, 40126 Bologna, Italy; Institute of Systems Engineering, School of Engineering, HES-SO Valais-Wallis, Rue de l'Industrie 21, 1950 Sion, Switzerland; Lausanne University Hospital and University of Lausanne, Lausanne, Department of Diagnostic and Interventional Radiology, Rue du Bugnon 46, Lausanne 1011, Switzerland. The Sense Innovation and Research Center, Avenue de Provence 82
1007, Lausanne and Ch. de l’Agasse 5, 1950 Sion, Switzerland.}
\email{gianluca.giacchi2@unibo.it}
%\author{Luigi Rodino}
%\address{}
%\email{}

\thanks{}
\subjclass[2010]{42C15,42B35,42A38}

%\subjclass{35S05,35S30,
%47G30, 42C15}
%\date{}
\keywords{Frames, time-frequency analysis, modulation spaces, Wiener amalgam spaces, time-frequency representations, metaplectic group, symplectic group}
\maketitle

\section{Introduction}
%Standing at the edge between discrete and continuous analysis, Gabor theory is a powerful tool that finds its applications in many disciplines, other than time-frequency analysis, such as signal processing, complex analysis and commutative geometry.
%
Frames were originally  introduced by  Duffin and Schaeffer in \cite{DS1952} and today they have become popular in many different fields, such as sampling theory, phase retrival, operator theory (they almost diagonalize several classes of  pseudodifferential and Fourier integral operators), PDE's, nonlinear
sparse approximation,  wireless communications,  signal processing,
quantum  mechanics and computing (cf. \cite{Elena-book,GrochenigMystery,heil,PILIPOVIC-Stoeva} and references therein). Any environment may require a suitable frame, tailored for the matter, so that the main concern is to find \emph{the right atoms} to represent a function.   

For a  fixed window $g\in L^2(\rd)\setminus\{0\}$, define 
 $M_\xi g(t):=e^{2\pi i\xi\cdot t}g(t)$, $\xi\in\rd$,  and $T_xg(t)=g(t-x)$, $x\in \rd$,  the \textit{modulation}  and the \textit{translation}  operator, respectively.
Their composition $\pi(z)=M_\xi T_x$, $z=\phas$, is called a \emph{time-frequency shift}. Let $\Lambda$ be a sequence of points in $\rdd$ (that is, a \emph{discrete} set in $\rdd$). 
Then the Gabor system generated by $g$ and $\Lambda$ is the set of time-frequency shifts 
$$\cG(g,\Lambda)=\{\pi(\lambda)g\}_{\lambda\in\Lambda}.$$

The Gabor system is a Gabor frame if it is a frame: namely, if  there exist $A,B>0$ such that the inequalities
\begin{equation}\label{defGFintro}
	A\norm{f}_2^2\leq\sum_{\lambda\in\Lambda}|\la f,\pi(\lambda)g\ra|^2\leq B\norm{f}_2^2
\end{equation}
hold for every function $f\in L^2(\rd)$. This implies the \emph{reproducing formula}
\begin{equation}\label{discreteinvSTFTintro}
	f = \sum_{\lambda\in \Lambda}\la f,\pi(\lambda)g\ra\pi(\lambda)\gamma,
\end{equation}
for a suitable function $\gamma\in\lrd$ (called \textit{dual window}).

Observe that the elements of a Gabor frame are simply generated by
time-frequency shifts of a single window function and are called  \emph{Gabor atoms}.
They arise naturally from the discretization of the
 \textit{short-time Fourier transform} (STFT),  defined as 
\begin{equation}\label{STFTintro}
	V_gf(x,\xi)=\la f,M_\xi T_x g\ra, \qquad \phas\in\rdd.
\end{equation}
% Since $V_gf(x,\cdot)=\cF(f T_x\bar g)$, the function $V_gf(x,\cdot)$ may be thought as the local frequency content of $f$ at time $x$. 

In fact, the STFT  decomposes a signal $f\in\lrd$ as integral superposition of the time-frequency shifts  $\pi(x,\xi)$ as follows:
\begin{equation}\label{invSTFTintro}
	f = \frac{1}{\la \gamma,g\ra}\int_{\rdd}V_gf(x,\xi)\pi(x,\xi) \gamma dxd\xi, \qquad f\in L^2(\rd),
\end{equation}
where $g,\gamma\in L^2(\rd)$, $\gamma,g\in L^2(\rd)$ satisfy $\la \gamma,g\ra\neq0$, and the integral is intended in the weak sense of vector-valued integration. 

In the practice, integrals are approximated by the partial sums of their Riemann sums, so that, using the equality $|V_gf(\lambda)|=|\la f,\pi(\lambda)g\ra|$,   the Gabor reproducing formula in \eqref{discreteinvSTFTintro} can be viewed as a discretization  of \eqref{invSTFTintro}.  Equivalently, it expresses $f$ as a discrete superposition of fundamental \textit{atoms}. \\% opens the question of which families $\cG(g,\Lambda)$ define Gabor frames of $L^2(\rd)$, a problem that is still a wide open object of research. \\

In this paper we are mainly concerned with the characterization of the fundamental  spaces in time-frequency analysis: modulation and Wiener amalgam spaces.
They were introduced by H. Feichtinger in the 80's \cite{Feichtinger_1981_Banach} (cf. Galperin and Samarah \cite{Galperin2004} for the quasi-Banach setting) and have become popular in the last twenty years, since they have been proved to be the right environment for many different topics:  signal analysis, PDE's,  quantum mechanics, approximation theory \cite{KB2020,Elena-book}.

 Let $m$ be a $v$-moderate weight, see Section \ref{subsec:Lpq} below for details. We say that a tempered distribution $f$ belongs to the \textit{modulation space} $M^{p,q}_m(\rd)$, $0<p,q\leq\infty$, if $V_gf\in L^{p,q}_m(\rdd)$. Consequently, these spaces are used to measure the local time-frequency content of signals in terms of Lebesgue (quasi-)norms.

Differently from the framework of $L^p$ spaces, where the convolution is not even well-defined for $L^p$ functions with $0<p<1$, discrete convolution inequalities hold also in the quasi-Banach setting. These facts, together with Gabor theory, are  used  to prove the atomic decomposition of modulation spaces \cite{book,Galperin2004}. 
%If $m$ is a $v$-moderate weight, see Section \ref{subsec:Lpq} below, a tempered distribution $f$ belongs to the modulation space $M^{p,q}_m(\rd)$ if $V_gf\in L^{p,q}_m(\rdd)$. Consequently, these spaces are used to measure the local time-frequency content of signals.
Namely, if $\cG(g,\Lambda)$ is a Gabor frame, formula (\ref{discreteinvSTFTintro}) holds with unconditional convergence in $M^{p,q}_m(\rd)$ ($0<p,q<\infty$) and with weak-$\ast$ convergence in $M^\infty_{1/v}(\rd)$ otherwise. Moreover,
\begin{align*}\label{atomicDecintro}
	\norm{f}_{M^{p,q}_m}&\asymp \norm{(V_gf(\lambda_1,\lambda_2))_{(\lambda_1,\lambda_2)\in\Lambda}}_{\ell^{p,q}_m(\Lambda)}\\
	&=\left(\sum_{\lambda_2}\left(\sum_{\lambda_1}|V_gf(\lambda_1,\lambda_2)|^p
m(\lambda_1,\lambda_2)^p\right)^{q/p}\right)^{1/q}.\notag
\end{align*}

%with obvious modifications if $\max\{p,q\}=\infty$.\\
%Other important time-frequency representations are the so-called (cross) \textit{$\tau$-Wigner distributions}. For a fixed parameter $\tau\in\bR\setminus\{0,1\}$, they can be defined for all $f\in L^2(\rd)$ as
%\begin{equation}\label{defWtauintro}
%	W_\tau(f,g)(x,\xi)=\la f,\pi_\tau(x,\xi)g\ra,
%\end{equation}
%where $g\in L^2(\rd)$ and $\pi_\tau(x,\xi)g(t)=(\tau|1-\tau|)^{-d/2}e^{-2\pi i\frac{x\cdot\xi}{\tau}}\pi(\frac{1}{1-\tau}x,\frac{1}{\tau}\xi)$. The cases $\tau=0,1$ are known respectively as (cross) \textit{Rihacek} and \textit{conjugate-Rihacek} distributions and can still be written as in (\ref{defWtauintro}), but only if $f,g\in\cS(\rd)$.  

In this paper we extend the characterization above to more general frames, including the Gabor ones. As well as Gabor frames arise as discretization of the reproducing formula for the STFT, we introduce frames that come from discretizations of a more general class of TF-representations, including the STFT. Namely, the latter representation, as well as the  \textit{$\tau$-Wigner distributions} (see Section \ref{subsec:23} below), are examples of the so-called \textit{metaplectic Wigner distributions}, introduced in \cite{CR2021} and studied in \cite{CR2022, CGR2022, Giacchi}.

 For a fixed metaplectic operator $\hat\cA\in Mp(2d,\bR)$, the metaplectic Wigner distribution $W_\cA$ is defined by
\begin{equation}\label{defMetWigintro}
	W_\cA(f,g)=\hat\cA(f\otimes \bar g),\quad f,g\in L^2(\rd).
\end{equation}
We refer to Section \ref{subsec:26} for the definition of metaplectic operators. If the pointwise evaluations $W_\cA(f,g)(x,\xi)$, $x,\xi\in\rdd$, are well defined, $W_\cA(f,g)(x,\cdot)$ can also be used to represent the local frequency content of the signal $f$ at time $x$ differently and more suitably, according to the context. For this reason, it is important to establish whether a metaplectic Wigner distribution can  be used to measure the local frequency content of signals or, stated differently, when 
\begin{equation}\label{charBanachintro}
	\norm{f}_{M^{p,q}_m}\asymp\norm{W_\cA(f,g)}_{L^{p,q}_m}.
\end{equation}

For any metaplectic Wigner distribution $W_\cA$ there exists a matrix $E_\cA\in \bR^{2d\times 2d}$ such that
\[
	|W_\cA(\pi(w)f,g)(z)|=|W_\cA(f,g)(z-E_\cA w)|, \qquad w\in\rdd,
\]
and $W_\cA$ is \textit{shift-invertible} if $E_\cA\in GL(2d,\bR)$. It was shown in \cite{CGshiftinvertible} that if $W_\cA$ is shift-invertible and $E_\cA$ is upper-triangular, then (\ref{charBanachintro}) holds for all $1\leq p,q\leq\infty$. Nevertheless, the nature of shift-invertible Wigner distributions was still poorly-understood and no explicit characterization of them was provided. \\

In the attempt to prove (\ref{charBanachintro}) for the quasi-Banach setting $0<p,q\leq\infty$, the question arises whether an equivalent of (\ref{invSTFTintro}) can be proved for metaplectic Wigner distributions. Roughly speaking, Moyal's identity:
\[
	\la W_\cA(f,g),W_\cA(\varphi,\gamma)\ra = \la f,\varphi\ra \overline{\la g,\gamma\ra},\quad f,g,\varphi,\gamma\in L^2(\rd),
\]
 implies that
\begin{equation}\label{metAtomsintro}
	\la f,\varphi\ra = \frac{1}{\la \gamma,g\ra}\int_{\rdd}W_\cA(f,g)(z)\overline{W_\cA(\varphi,\gamma)(z)}dz.
\end{equation}
This suggests to define the \textit{metaplectic atoms} $\pi_\cA(z)$, $z\in\rd$, implicitly on $\cS(\rd)$ as the distributions characterized by:
\[
\la \varphi,\pi_\cA(z)\gamma\ra={ W_\cA(\varphi,\gamma)(z) }, \qquad \varphi\in\cS(\rd),
\]
so that (\ref{metAtomsintro}) becomes the vector-valued integral:
\[
	f = \frac{1}{\la\gamma,g\ra}\int_{\rdd}W_\cA(f,g)(z)\pi_\cA(z)\gamma dz.
\]
A \textit{metaplectic Gabor system} of $L^2(\rd)$ is defined  as the family $$\cG_\cA(g,\Lambda)=\{\pi_\cA(\lambda)g\}_{\lambda\in\Lambda},$$ with $g\in L^2(\rd)$ and $\Lambda\subset\rdd$ a discrete set. If the family above is a frame, that is, it satisfies the inequalities in \eqref{defGFintro} with $\pi_\cA(\lambda)g$ in place of $\pi(\lambda)g$, we call it a \emph{metaplectic Gabor frame}. \\

In this work, we develop the theory of metaplectic Gabor frames, showing that the related frame operator enjoys similar property to the Gabor one.  In particular, in Theorem \ref{thmFrames} below, under the shift-invertibility assumption of $W_\cA$ it is shown the equivalence of the following statements:

\vspace{0.1truecm}
\noindent
\emph{(i) $\cG_\cA(g,\Lambda)$ is a metaplectic Gabor frame with bounds $A,B$;\\
(ii) $\cG(\widehat{\delta_\cA}g,E_\cA^{-1}\Lambda)$ is a Gabor frame with bounds $|\det(E_\cA)|A,|\det(E_\cA)|B$;}\\

\vspace{0.1truecm}\noindent
with $\widehat{\delta_\cA}$ being a suitable metaplectic operator called \emph{deformation operator}, see Definition 4.5 in the sequel.

Special instances of metaplectic Gabor frames are the $\hbar$-Gabor frames introduced by M. de Gosson in \cite{DGosson}, see Example \ref{exDG2} in Section 6.
This result generalizes \cite[Proposition 7]{DGosson} because in our case $E_\cA$ needs not to be symplectic.\\

Another outcome of this manuscript is given by the characterization of the shift-invertibility property of $W_\cA$.
We prove that $W_\cA$ is shift-invertible if and only if $W_\cA$ is, roughly speaking,  a STFT up to linear change of variables and products-by-chirps (Corollary \ref{cor43} in Section \ref{sec:SIU}):

\vspace{0.1truecm}
\emph{ $W_\cA$ is shift-invertible if and only if, up to a sign, for any $f,g\in L^2(\rd)$} 
$$
	W_\cA(f,g)(z)=|\det(E_\cA)|^{-1/2}\Phi_{N_\cA}(E_\cA^{-1}z)V_{\widehat{\delta_\cA} g}f(E_\cA^{-1}z),\quad z\in\rdd,
$$
where $	\Phi_{N_\cA}(t)=e^{\pi i t\cdot N_\cA t},\,\,t\in\rd,$
with an appropriate matrix $N_\cA$.%, see Corollary \ref{cor43} in Section \ref{sec:SIU}. 

This characterization shows that the property of measuring local time-frequency content of signals is basically a typical feature of the STFT. \\
As application,  we complete the characterization of modulation and Wiener amalgam spaces started in \cite{CR2022, CGR2022, CGshiftinvertible}, cf. Theorem \ref{thmF} below. This result shows that, under the shift-invertibility assumption, the characterization in \eqref{charBanachintro} holds for every $0<p,q\leq\infty$.\\

{\bf Outline.}  This work is divided as follows. We present preliminaries and notation in Section \ref{sec:preliminaries}. Section \ref{sec:TFatoms} is devoted to metaplectic atoms, defined implicitly as in (\ref{metAtomsintro}), and to an equivalent of inversion formula (\ref{invSTFTintro}) for metaplectic Wigner distributions. In Section \ref{sec:SIU}, we characterize shift-invertible Wigner distributions in terms of the STFT. %In Section \ref{sec:WDA}, we express the metaplectic atoms of a sub-class of shift-invertible Wigner distributions, the so-called \textit{Wigner-decomposable} distributions, which represent the straightforward generalization of the STFT. 
We compute the metaplectic atoms of the distributions which belong to the Cohen's class in Section \ref{sec:CA}. In Section \ref{sec:MGF} we define metaplectic Gabor frames, characterizing those related to shift-invertible distributions. In Section \ref{sec:CMS} we complete the characterization of modulation spaces and Wiener amalgams in terms of shift-invertibility. We devote the Appendix to the proof of an intertwining formula between metaplectic operators and complex conjugation, which is used to obtain the expression of the adjoint of metaplectic atoms in Section \ref{sec:TFatoms}.

\section{Preliminaries}\label{sec:preliminaries}
\textbf{Notation.} We denote %$t^2=t\cdot t$,  $t\in\rd$, and
$xy=x\cdot y$ (scalar product on $\Ren$).  The space   $\sch(\Ren)$ is the Schwartz class, which is a Frech\'{e}t space with seminorms
\[
	\rho_{\alpha,\beta}(f):=\sup_{x\in\rd}|x^\alpha D^\beta f(x)|, \qquad \alpha,\beta\in\mathbb{N}^d,
\]
whereas its dual $\sch'(\Ren)$ is the space of tempered distributions. The brackets  $\la f,g\ra$ denote the extension to $\sch' (\Ren)\times\sch (\Ren)$ of the inner product $\la f,g\ra=\int f(t){\overline {g(t)}}dt$ on $L^2(\Ren)$ (conjugate-linear in the second component). We write a point in the phase space (or \tf\ space) as
$z=(x,\xi)\in\rdd$, and  the corresponding phase-space shift (\tfs )
acts on a function or distribution  as
\begin{equation}
\label{eq:kh25}
\pi (z)f(t) = e^{2\pi i \xi\cdot t} f(t-x), \, \quad t\in\rd.
\end{equation}
In the following, we will use the composition law of time-frequency shifts: for all $z=(z_1,z_2),w=(w_1,w_2)\in\rdd$,
\begin{equation}\label{commLawTFs}
	\pi(z)\pi(w)=e^{-2\pi i z_1\cdot w_2}\pi(z+w).
\end{equation}
Trivially $\pi(0)=id_{L^2}$ and it is easy to verify that
\begin{equation}
\label{inverseTFs}
	\pi(z)^{-1}=\pi(z)^\ast=e^{-2\pi iz_1\cdot z_2}\pi(-z).
\end{equation}
Time-frequency shifts are isometries of $L^2(\rd)$. If $t\in\rd$, the Dirac delta distribution $\delta_t\in\cS'(\rd)$ is characterized by
\[
	\la\delta_t,\varphi\ra:=\overline{\varphi(t)} \qquad \varphi\in\cS(\rd).
\]

%We shall work with  lattices in the phase-space $\Lambda\subset \rdd$,  $\Lambda=A\zdd$,  with $A\in GL(2d,\R)$ and we will denote by $Q$ a fundamental domain containing the origin.  
%$\cC_0^\infty(\rdd)$ denotes the space of smooth functions with compact support. 
The notation $f\lesssim g$ means that there exists $C>0$ such that $ f(x)\leq Cg(x)$ holds for all $x$. The symbol $\lesssim_t$ is used when we stress that $C=C(t)$. If $ g\lesssim f\lesssim g$ or, equivalently, $ f \lesssim g\lesssim f$, we write $f\asymp g$. For two measurable functions $f,g:\rd\to\bC$, we set $f\otimes g(x,y):=f(x)g(y)$. If $X,Y$ are vector spaces, $X\otimes Y$ is the unique completion of $\text{span}\{x\otimes y : x\in X, y\in Y\}$. If $X(\rd)=L^2(\rd)$ or $\cS(\rd)$, the set $\text{span}\{f\otimes g:f,g\in X(\rd)\}$ is dense in $X(\rdd)$. Thus, for all $f,g\in\cS'(\rd)$, the operator $f\otimes g\in\cS'(\rdd)$ is defined by its action on $\varphi\otimes\psi\in\cS(\rdd)$ by
\[
	\la f\otimes g,\varphi\otimes\psi\ra = \la f,\varphi\ra\la g,\psi\ra
\]
extends uniquely to a tempered distribution of $\cS'(\rdd)$. %The subspace $\text{span}\{f\otimes g: f,g\in\cS'(\rd)\}$ is dense in $\cS'(\rdd)$.

$GL(d,\bR)$ denotes the group of $d\times d$ invertible matrices.

%A set $\Lambda\subset \rd$ is called \emph{lattice} if there is a matrix $A\in GL(d,\bR)$  such that $\Lambda=A\zd =\{Az : z\in\zd\}$. 

\subsection{Weighted mixed norm spaces}\label{subsec:Lpq}
We denote by $v$ a continuous, positive, even, submultiplicative weight function on $\rdd$, i.e., 
$ v(z_1+z_2)\leq v(z_1)v(z_2)$, for all $ z_1,z_2\in\rdd$. %Observe that since $v$ is even, positive and submultiplicative, it follows that $v(z)\geq1$ for all $z \in\rdd$. 
We say that $w\in \mathcal{M}_v(\rdd)$ if $w$ is a positive, continuous, even weight function on $\rdd$ that is {\it
	$v$-moderate}:
$ w(z_1+z_2)\lesssim v(z_1)w(z_2)$  for all $z_1,z_2\in\rdd$. Fundamental examples are the polynomial weights
\begin{equation}\label{vs}
	v_s(z) =(1+|z|)^{s},\quad s\in\bR,\quad z\in\rdd.
\end{equation}
Two weights $m_1,m_2$ are equivalent if $m_1\asymp m_2$. For example, $v_s(z)\asymp (1+|z|^2)^{s/2}$.\\

If $m\in\cM_v(\rdd)$, $0<p,q\leq\infty$ and $f:\rdd\to\bC$ measurable, we set 
\[
	\norm{f}_{L^{p,q}_m}:=\left(\int_{\rd}\left(\int_{\rd}|f(x,y)|^pm(x,y)^p dx\right)^{q/p}dy\right)^{1/q}=\norm{y\mapsto\norm{f(\cdot,y)m(\cdot,y)}_{p}}_{q},
\]
with the obvious adjustments when $\min\{p,q\}=\infty$. The space of measurable functions $f$ having $\norm{f}_{L^{p,q}_m}<\infty$ is denoted by $L^{p,q}_m(\rdd)$. %If $m\in\cM_v(\rdd)$ and $1\leq p,q\leq\infty$, then $L^{p,q}_{m}(\rdd)\ast L^1_v(\rdd)\hookrightarrow L^{p,q}_m(\rdd)$.\\
We recall the following partial generalization of the results in \cite{Fuhr}, which gathers the content of \cite[Theorems A2 and A3]{CGshiftinvertible}:

\begin{proposition}\label{thmA12}
		(i) Consider $A,D\in GL(d,\bR)$, $B\in \bR^{d\times d}$ and $0<p,q\leq\infty$. Define the {\bf upper triangular} matrix
		\begin{equation}\label{uppertr}
			S=\begin{pmatrix}
				A & B\\
				0_{d\times d} & D
			\end{pmatrix}.
		\end{equation}
		The mapping $\mathfrak{T}_S:f\in L^{p,q}(\rdd)\to |\det(S)|^{1/2}f\circ S$  is an isomorphism of $L^{p,q}(\rdd)$ with bounded inverse $\mathfrak{T}_{S^{-1}}$.\\
		(ii) Let $m\in\mathcal{M}_v(\rdd)$, $S\in GL(2d,\bR)$ and $0<p,q\leq\infty$. Consider the operator $(\mathfrak{T}_S)_m: f\in L^{p,q}_m(\rdd)\mapsto |\det(S)|^{1/2}f\circ S.$ If $m\circ S\asymp m$, then
	$\mathfrak{T}_S:L^{p,q}(\rdd)\to L^{p,q}(\rdd)$ is bounded if and only if $(\mathfrak{T}_S)_m:L^{p,q}_m(\rdd)\to L^{p,q}_m(\rdd)$ is bounded.
	\end{proposition}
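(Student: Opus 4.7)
The plan is to establish (i) by an explicit Fubini-plus-change-of-variables computation that is made legitimate by the block upper-triangular structure of $S$, and then derive (ii) as an essentially formal consequence via the canonical isometry $f\mapsto fm$ between $L^{p,q}_m$ and $L^{p,q}$.

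For part (i), write $S(x,y)=(Ax+By,\,Dy)$ and start from
\[
\norm{\mathfrak{T}_S f}_{L^{p,q}}^q
= |\det S|^{q/2}\int_{\rd}\Big(\int_{\rd}|f(Ax+By,Dy)|^p\,dx\Big)^{q/p}dy.
\]
The decisive point is that in the inner integral the variable $y$ is frozen, so the substitution $u=Ax+By$ (affine in $x$) has Jacobian $|\det A|$ and converts the inner $L^p$-norm into $|\det A|^{-1/p}\norm{f(\cdot,Dy)}_p$. A second substitution $v=Dy$ in the outer integral contributes $|\det D|^{-1/q}$. Combining and using $|\det S|=|\det A||\det D|$, one obtains
\[
\norm{\mathfrak{T}_S f}_{L^{p,q}}
= |\det A|^{\frac{1}{2}-\frac{1}{p}}|\det D|^{\frac{1}{2}-\frac{1}{q}}\norm{f}_{L^{p,q}},
\]
which proves boundedness. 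Replacing $S$ by $S^{-1}$ (still upper triangular with blocks $A^{-1}$, $D^{-1}$) yields the same identity for the inverse, whence $\mathfrak{T}_S$ is an isomorphism. The reasoning is purely pointwise, so it extends verbatim to the quasi-Banach range $0<p,q\leq\infty$; the only care needed is to first apply Tonelli to nonnegative integrands, which is standard.

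For part (ii), I would use that multiplication by $m$ is an isometric isomorphism $L^{p,q}_m\to L^{p,q}$. The trick is to compare the two functions
\[
\bigl(\mathfrak{T}_S f\bigr)(z)\,m(z)=|\det S|^{1/2}f(Sz)\,m(z),
\qquad
\mathfrak{T}_S(fm)(z)=|\det S|^{1/2}f(Sz)\,m(Sz).
\]
The hypothesis $m\circ S\asymp m$ says precisely that these two are pointwise equivalent, so
\[
\norm{\mathfrak{T}_S f}_{L^{p,q}_m}\asymp\norm{\mathfrak{T}_S(fm)}_{L^{p,q}}.
\]
From here, boundedness of $\mathfrak{T}_S$ on $L^{p,q}$ transfers to $L^{p,q}_m$ by applying it to $fm$, and the converse follows by the same identity since every $g\in L^{p,q}$ is of the form $fm$ with $f=g/m\in L^{p,q}_m$ (note $1/m$ is also $v$-moderate up to equivalence, so this is meaningful).

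I do not expect a real obstacle here; the only subtle points are keeping the exponents $1/p,1/q$ straight in the quasi-Banach regime and justifying that $m\circ S\asymp m$ implies $m\circ S^{-1}\asymp m$ (immediate by substituting $S^{-1}z$ for $z$ in the equivalence). The essential feature of the argument — and the reason the upper-triangular hypothesis cannot be dropped in (i) — is that only a triangular $S$ permits the iterated change of variables variable-by-variable; a general invertible $S$ couples $x$ and $y$ and the iterated $L^{p,q}$-norm need not be invariant, which is exactly why (ii) is formulated as an equivalence conditional on the unweighted boundedness rather than as an absolute result.
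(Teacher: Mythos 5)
Your proof is correct: the iterated change of variables in part (i), which works precisely because the triangular structure decouples the inner integration in $x$ (for frozen $y$) from the outer one in $y$, yields the exact norm identity $\norm{\mathfrak{T}_S f}_{L^{p,q}}=|\det A|^{\frac12-\frac1p}|\det D|^{\frac12-\frac1q}\norm{f}_{L^{p,q}}$ for all $0<p,q\leq\infty$, and the reduction of (ii) to the unweighted case via the isometry $f\mapsto fm$ together with $m\circ S\asymp m$ is sound in both directions. The paper itself gives no proof of this proposition --- it is quoted from Theorems A2 and A3 of \cite{CGshiftinvertible} --- but your argument is the standard one underlying that reference, so there is nothing to correct.
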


\subsection{Fourier transform}
In this work, the Fourier transform of $f\in \cS(\rd)$ is defined as
\[
	\hat f(\xi)=\int_{\rd} f(x)e^{-2\pi i\xi\cdot x}dx, \qquad \xi\in\rd.
\]
If $f\in\cS'(\rd)$, the Fourier transform of $f$ is defined by duality as the tempered distribution characterized by
\[
	\langle \hat f,\hat\varphi\rangle=\langle f,\varphi\rangle, \qquad \varphi\in\cS(\rd).
\]
We denote with $\cF f:=\hat f$ the Fourier transform operator. It is a surjective automorphism of $\cS(\rd)$ and $\cS'(\rd)$, as well as a surjective isometry of $L^2(\rd)$.

%If $1\leq j\leq d$, the \textit{partial Fourier transform} with respect to the $j$th coordinate is defined  as
%\begin{equation}\label{FJ}
%	\cF_j f(t_1,\ldots, t_{j-1},\xi_j,t_{j+1}\ldots,t_d)=\int_{-\infty}^\infty f(t_1,\ldots,t_d)e^{-2\pi it_j\xi_j}dt_j,\quad f\in L^1(\rd).
%\end{equation}
%Analogously, the definition is transported on $\cS'(\rd)$ in terms of antilinear duality pairing: for all $f\in\cS'(\rd)$,
%\[
%	\langle \cF_j f,\varphi\rangle :=\langle f,\cF_j^{-1}\varphi\rangle, \qquad \varphi\in\cS(\rd).
%\]
%Observe that $\cF_j\cF_k=\cF_k\cF_j$ for all $1\leq j, k\leq d$. In particular, 
%\begin{equation}\label{compFj}
%	\cF=\cF_{\sigma(1)}\circ\ldots\circ\cF_{\sigma(d)}
%	\end{equation}
%	holds that for all permutation $\sigma\in Sym(\{1,\ldots,d\})$. 
If $f\in\cS(\rdd)$, we set $\cF_2f(x,\eta):=\int_{\rd}f(x,y)e^{-2\pi i\eta\cdot y}dy$, the partial Fourier transform with respect to the second variables, which is a surjective isomorphism of $\cS(\rdd)$ to itself. This definition extends to $L^2(\rdd)$ by density and to $\cS'(\rdd)$ by duality. Namely, if $f\in\cS'(\rdd)$, $$ \langle \cF_2f,\varphi\rangle = \langle f,\cF_2^{-1}\varphi\rangle, \qquad \varphi\in\cS(\rdd).$$
%
%If $f\in\cS'(\rdd)$, we set $\cF_2f$, the partial Fourier transform with respect to the second variables, characterized by its action on linear combinations of tensor products by $\cF_2(\sum_{j=1}^N\alpha_jf_j\otimes g_j)=\sum_{j=1}^N\alpha_jf_j\otimes\widehat{g_j}$ ($f_j,g_j\in\cS'(\rd)$, $\alpha_j\in\bC$, $j=1,\ldots,N$). 
%%Finally, for all $1\leq j\leq d$,
%%	\[
%%		\cF_j^2 f(x_1,\ldots,x_d)=f(x_1,\ldots,x_{j-1},-x_j,x_{j+1},\ldots,x_d).
%%	\]

\subsection{Time-frequency analysis tools}\label{subsec:23}
The \textit{short-time Fourier transform} of $f\in L^2(\rd)$ with respect to the window $g\in L^2(\rd)$ is the time-frequency representation defined as
\[
	V_gf(x,\xi)=\int_{\rd}f(t)\overline{g(t-x)}e^{-2\pi i\xi\cdot t}dt, \qquad (x,\xi)\in\rdd.
\]
This definition extends to $(f,g)\in\cS'(\rd)\times\cS(\rd)$ by antilinear duality as $V_gf(x,\xi)=\langle f,\pi(x,\xi)g\rangle$. 
%Among all the equivalent definitions of $V_gf$, we recall that $$V_gf=\cF_2\mathfrak{T}_L(f\otimes\bar g),$$ where $\mathfrak{T}_LF(x,y)=F(y,y-x)$ and $\cF_2$ is the partial Fourier transform with respect to the second coordinate, cf. Example \ref{es22} below. 
%This equality allows to extend the definition of $V_gf$ up to $\cS'(\rd)\times\cS'(\rd)$.
The reproducing formula for the STFT reads as follows: for all $g,\gamma\in L^2(\rd)$ such that $\la g,\gamma\ra\neq0$,
\begin{equation}\label{STFTinv}
	f=\frac{1}{\la\gamma,g\ra}\int_{\rdd}V_gf(x,\xi)\pi(x,\xi)\gamma  dxd\xi,
\end{equation}
where the identity holds in $L^2(\rd)$ as a vector-valued integral in the weak sense (see, e.g., \cite[Subsection 1.2.4]{Elena-book}).

In high-dimensional complex features information processing $\tau$-Wigner distributions  ($\tau\in\bR$) play a crucial role \cite{ZJQ21}. They are defined as
\begin{equation}\label{tauWigner}
	W_\tau(f,g)(x,\xi)=\int_{\rd} f(x+\tau t)\overline{g(x-(1-\tau)t)}e^{-2\pi i\xi \cdot t}dt, \qquad  \phas\in\rd,
\end{equation}
for $f,g\in L^2(\rd)$.
The cases $\tau=0$ and $\tau=1$ are the so-called (cross-)\textit{Rihacek distribution}
\begin{equation}\label{RD}
W_0(f,g)(x,\xi)=f(x)\overline{\hat g(\xi)}e^{-2\pi i\xi\cdot x}, \quad \phas\in\rd,
\end{equation}
 and (cross-)\textit{conjugate Rihacek distribution}
 \begin{equation}\label{CRD}
 W_1(f,g)(x,\xi)=\hat f(\xi)\overline{g(x)}e^{2\pi i\xi\cdot x}, \quad \phas\in\rd.
 \end{equation}
 %Observe that $W_\tau f(x,\xi)=\cF_2\mathfrak{T}_{L_\tau}(f\otimes\bar g)$, where for any $F$ on $\rdd$, $$\mathfrak{T}_{L_\tau}F(x,y)=F(x+\tau y,x-(1-\tau)y),\quad x,y\in\rd.$$

\subsection{Modulation  spaces \cite{KB2020,F1,Feichtinger_1981_Banach,book,Galperin2004,Kobayashi2006,PILIPOVIC2004194}} \label{subsec:MSs}
Fix $0<p,q\leq\infty$, $m\in\mathcal{M}_v(\rdd)$, and $g\in\cS(\rd)\setminus\{0\}$. The \textit{modulation space} $M^{p,q}_m(\rd)$ is classically defined as the space of tempered distributions $f\in\cS'(\rd)$ such that $$\norm{f}_{M^{p,q}_m}:=\Vert V_gf\Vert_{L^{p,q}_m}<\infty.$$ If $\min\{p,q\}\geq1$, the quantity $\norm{\cdot}_{M^{p,q}_m}$ defines a norm, otherwise a quasi-norm. Different windows give rise to equivalent (quasi-)norms. Modulation spaces are (quasi-)Banach spaces and the following continuous inclusions hold: \\
if $0<p_1\leq p_2\leq\infty$, $0<q_1\leq q_2\leq\infty$ and $m_1,m_2\in\mathcal{M}_{v}(\rdd)$ satisfy $m_2\lesssim m_1$: $$ \cS(\rd)\hookrightarrow M^{p_1,q_1}_{m_1}(\rd)\hookrightarrow M^{p_2,q_2}_{m_2}(\rd)\hookrightarrow\cS'(\rd).$$ In particular, $M^1_v(\rd)\hookrightarrow M^{p,q}_m(\rd)$ whenever $m\in\mathcal{M}_v(\rdd)$ and $\min\{p,q\}\geq1$. %We will also use the inclusion $M^1_{m\otimes 1}(\rdd)\hookrightarrow L^1_m(\rdd)$. 
We denote with $\cM^{p,q}_m(\rd)$ the closure of $\cS(\rd)$ in $M^{p,q}_m(\rd)$, which coincides with the latter whenever $p,q<\infty$. Moreover, if $1\leq p,q<\infty$, $(M^{p,q}_m(\rd))'=M^{p',q'}_{1/m}(\rd)$, where $p'$ and $q'$ denote the Lebesgue conjugate exponents of $p$ and $q$ respectively. Finally, if $m_1\asymp m_2$, then $M^{p,q}_{m_1}(\rd)=M^{p,q}_{m_2}(\rd)$ for all $p,q$.
%
%\subsection{Wiener amalgam spaces \cite{feichtinger-wiener-type,Feichtinger_1990_Generalized,Rauhut2007Winer}} For  $0<p,q\leq\infty$,  $m_1,m_2\in\cM_v(\rdd)$, the \textit{Wiener amalgam space} $W(\cF L^p_{m_1},L^q_{m_2})(\rd)$,   is  defined as the space of tempered distributions $f\in\cS'(\rd)$ such that for some (hence, all) window $g\in\cS(\rd)\setminus\{0\}$, 
%\[
%	\norm{f}_{W(\cF L^p_{m_1},L^q_{m_2})}:=\norm{ x\mapsto m_2(x)\norm{V_gf(x,\cdot)m_1}_p}_{q}<\infty.
%\]
%Using (\ref{fundid}), we have that $\norm{f}_{W(\cF L^p_{m_1},L^q_{m_2})}=\norm{\hat f}_{M^{p,q}_{m_1\otimes m_2}}$, so that $$\cF M^{p,q}_{m_1\otimes m_2}(\rd)=W(\cF L^p_{m_2},L^q_{m_1})(\rd).$$ 
%Also, for $p=q$, 
%\begin{equation}\label{WeM}
%W(\cF L^p_{m_1},L^p_{m_2})(\rd)=M^p_{m_1\otimes m_2}(\rd).\end{equation}
%

\subsection{The symplectic group $Sp(d,\mathbb{R})$ and the metaplectic operators}\label{subsec:26}
	A matrix $S\in\bR^{2d\times 2d}$ is symplectic, we write $S\in Sp(d,\bR)$, if 
	\begin{equation}\label{fundIdSymp}
	S^TJS=J,\end{equation} where the matrix $J$ is defined as
	\begin{equation}\label{defJ}
	J=\begin{pmatrix}
		0_{d\times d} & I_{d\times d}\\
		-I_{d\times d} & 0_{d\times d}
	\end{pmatrix}.
\end{equation}
	In this work, $I_{d\times d}\in\bR^{d\times d}$ is the identity matrix and $0_{d\times d}$ is the matrix of $\bR^{d\times d}$ having all zero entries.
	
	We represent $S\in Sp(d,\bR)$ as a block matrix 
	\begin{equation}\label{blocksA}
		S=\begin{pmatrix} A & B\\
		C & D\end{pmatrix}
	\end{equation}
	with $A,B,C,D\in\bR^{d\times d}$. It is straightforward to verify that $S\in\bR^{2d\times 2d}$ is symplectic if and only if the following conditions hold:
	\begin{align*}
			&(R1) \qquad \text{$A^TC=C^TA$},\\
			&(R2) \qquad \text{$B^TD=D^TB$},\\
			&(R3) \qquad \text{$A^TD-C^TB=I_{d\times d}$},
	\end{align*}
	and it can be proved that $\det(S)=1$ and the inverse of $S$ is explicitly given in terms of the blocks of $S$ as
	\begin{equation}\label{invSymp}
		S^{-1}=\begin{pmatrix} D^T & -B^T\\
		-C^T & A^T\end{pmatrix}.
	\end{equation}
	%If $\cA\in Sp(d,\bR)$ and $\det(B)\neq0$, $\cA$ is called \textbf{free}. 
 %The matrix $S\in Sp(d,\bR)$ with block decomposition \eqref{blocksA} is called \emph{free} if $\det B\not=0$.
  
	For $E\in GL(d,\bR)$ and $C\in\bR^{d\times d}$, $C$ symmetric, we define
	\begin{equation}\label{defDLVC}
		\cD_E:=\begin{pmatrix}
			E^{-1} & 0_{d\times d}\\
			0_{d\times d} & E^T
		\end{pmatrix} \qquad \text{and} \qquad V_C:=\begin{pmatrix}
			I_{d\times d} & 0\\ C & I_{d\times d}
		\end{pmatrix}.
	\end{equation}
	$J$ and the matrices in the form $V_C$ ($C$ symmetric) and $\cD_E$ ($E$ invertible) generate the group $Sp(d,\bR)$.\\ %Also, if $\cA\in Sp(d,\bR)$, there exist $\cA_1,\cA_2\in Sp(d,\bR)$ free such that $\cA=\cA_1\cA_2$. 

Let $\rho$ be the Schr\"odinger representation of the Heisenberg group, that is $$\rho(x,\xi;\tau)=e^{2\pi i\tau}e^{-\pi i\xi\cdot x}\pi(x,\xi),$$ for all $x,\xi\in\rd$, $\tau\in\bR$. We will use the following tensor product property:  for all $f,g\in L^2(\rd)$, $z=(z_1,z_2),w=(w_1,w_2)\in\rdd$,
\[
	\rho(z;\tau)f\otimes\rho(w;\tau)g=e^{2\pi i\tau}\rho(z_1,w_1,z_2,w_2;\tau)(f\otimes g).
\]
For all $S\in Sp(d,\bR)$, $\rho_S(x,\xi;\tau):=\rho(S (x,\xi);\tau)$ defines another representation of the Heisenberg group that is equivalent to $\rho$, i.e., there exists a unitary operator $\hat S:L^2(\rd)\to L^2(\rd)$ such that
\begin{equation}\label{muAdef}
	\hat S\rho(x,\xi;\tau)\hat S^{-1}=\rho(S(x,\xi);\tau), \qquad  x,\xi\in\rd, \ \tau\in\bR.
\end{equation}
This operator is not unique, but if $\hat S'$ is another unitary operator satisfying (\ref{muAdef}), then $\hat S'=c\hat S$, for some constant $c\in\bC$, $|c|=1$. The set $\{\hat S : S\in Sp(d,\bR)\}$ is a group under composition and it admits a subgroup that contains exactly two operators for each $S\in Sp(d,\bR)$. This subgroup is called \textbf{metaplectic group}, denoted by $Mp(d,\bR)$. It is a realization of the two-fold cover of $Sp(d,\bR)$ and the projection \begin{equation}\label{piMp}
	\pi^{Mp}:Mp(d,\bR)\to Sp(d,\bR)
\end{equation} is a group homomorphism with kernel $\ker(\pi^{Mp})=\{-id_{{L^2}},id_{{L^2}}\}$.

Throughout this work, if $\hat S\in Mp(d,\bR)$, the matrix $S$ (without the caret) will always be the unique symplectic matrix such that $\pi^{Mp}(\hat S)=S$.

\begin{proposition}{\cite[Proposition 4.27]{folland89}}\label{Folland427}
	Every operator $\hat S\in Mp(d,\bR)$ maps $\cS(\rd)$ isomorphically to $\cS(\rd)$ and it extends to an isomorphism on $\cS'(\rd)$.
\end{proposition}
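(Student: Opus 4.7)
The plan is to reduce the claim to a generator-by-generator check, exploiting the fact that $Sp(d,\bR)$ is generated by the three families of matrices described right above the statement: $J$, the dilations $\cD_E$ ($E\in GL(d,\bR)$), and the shears $V_C$ ($C$ symmetric). Because the projection $\pi^{Mp}:Mp(d,\bR)\to Sp(d,\bR)$ is a surjective group homomorphism, the metaplectic group is generated, up to the central kernel $\{\pm \mathrm{Id}_{L^2}\}$, by any choice of preimages $\widehat{J}$, $\widehat{\cD_E}$, $\widehat{V_C}$. Since composition of isomorphisms of $\cS(\rd)$ (resp.\ $\cS'(\rd)$) is an isomorphism, and since a sign flip trivially preserves both spaces, it suffices to verify the statement for each generator separately.

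First I would exhibit explicit preimages of the three generating families, obtained by solving the intertwining relation \eqref{muAdef}:
\begin{itemize}
\item[(i)] For $S=\cD_E$, one checks directly that $\widehat{\cD_E}f(x)=|\det E|^{1/2}f(Ex)$ satisfies \eqref{muAdef}. Linear change of variables by an invertible $E$ is a standard isomorphism of $\cS(\rd)$ (continuity in the seminorms $\rho_{\alpha,\beta}$ is routine) and extends to $\cS'(\rd)$ by duality.
\item[(ii)] For $S=V_C$ with $C$ symmetric, the operator $\widehat{V_C}f(x)=e^{-\pi i x\cdot Cx}f(x)$ verifies \eqref{muAdef}. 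Multiplication by the smooth function $e^{-\pi i x\cdot Cx}$ of polynomial-bounded derivatives is a topological automorphism of $\cS(\rd)$, and its transpose gives the extension to $\cS'(\rd)$.
\item[(iii)] For $S=J$, one takes $\widehat{J}=c\,\cF$ for an appropriate unimodular constant $c$, so that \eqref{muAdef} holds by the standard intertwining properties of the Fourier transform with time-frequency shifts. Since $\cF$ is a surjective isomorphism of $\cS(\rd)$ and of $\cS'(\rd)$ (recalled in Section 2.2), so is $\widehat{J}$.
\end{itemize}

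Having these three building blocks, I would then conclude as follows: any $\hat S\in Mp(d,\bR)$ can be written, up to a sign, as a finite product $\hat S=\pm\,\hat S_1\cdots \hat S_N$ where each $\hat S_j$ is of type (i), (ii), or (iii). By the previous step, each $\hat S_j$ is an isomorphism of $\cS(\rd)$ and extends to an isomorphism of $\cS'(\rd)$; hence so does their composition $\hat S$. The inverse is realized as the product in reverse order (each generator being individually invertible in $Mp(d,\bR)$ by choosing the corresponding inverse matrix), which gives the two-sided continuity.

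The main obstacle is not any one of these checks individually, all of which are routine, but the reduction step: one must verify that the chosen preimages $\widehat{\cD_E}$, $\widehat{V_C}$, $\widehat{J}$ truly lie in $Mp(d,\bR)$ (as opposed to only in the larger group of all unitaries satisfying \eqref{muAdef}), so that the factorization of an arbitrary $\hat S$ into these generators indeed holds in $Mp(d,\bR)$ up to sign. This is a cocycle/central-extension argument: since $\pi^{Mp}$ has kernel $\{\pm \mathrm{Id}\}$ and $Sp(d,\bR)$ is generated by $\{J,\cD_E,V_C\}$, every element of $Mp(d,\bR)$ differs by at most a sign from a product of the explicit preimages, and the sign is absorbed trivially. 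Once this is settled, the conclusion on $\cS(\rd)$ and $\cS'(\rd)$ follows immediately from the three elementary verifications above.
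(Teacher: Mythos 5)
Your argument is correct and is essentially the standard proof of this fact: the paper itself offers no proof, only the citation to Folland, where the result is obtained by exactly this kind of factorization into elementary metaplectic operators, and all the ingredients you use --- the generation of $Sp(d,\bR)$ by $J$, $\cD_E$, $V_C$ and the explicit lifts $\cF$, $\mathfrak{T}_E$, $\phi_C$ --- are already recorded in Section 2.6 and Example \ref{es22}. The only remark worth adding is that the ``main obstacle'' you flag is harmless for this particular statement: even if a chosen preimage lay only in the larger group of unitaries satisfying \eqref{muAdef} rather than in $Mp(d,\bR)$, it would differ from the metaplectic lift by a unimodular constant, which does not affect being a topological isomorphism of $\cS(\rd)$ or of $\cS'(\rd)$, so no cocycle or central-extension analysis is actually required.
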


For $C\in\R^{d\times d}$, define 
\begin{equation}\label{chirp}
	\Phi_C(t)=e^{\pi i t\cdot Ct},\quad t\in\rd.
\end{equation}
If we add the assumptions  $C$ symmetric and invertible, then we can compute explicitly its Fourier transform, that is  
\begin{equation}\label{ft-chirp}
\widehat{\Phi_C}=|\det(C)|\,\Phi_{-C^{-1}}.
\end{equation}

\begin{example}\label{es22} For certain $\hat S\in Mp(d,\bR)$, the projection $S$ is known. Let $J$, $\cD_L$ and $V_C$ be defined as in (\ref{defJ}) and (\ref{defDLVC}), respectively. Then,
	\begin{enumerate}
		\item[\it (i)] $\pi^{Mp}(\cF)=J$;
		\item[\it (ii)] if $\mathfrak{T}_E:=|\det(E)|^{1/2}\,f(E\cdot)$, then $\pi^{Mp}(\mathfrak{T}_E)=\cD_E$;
		\item[\it (iii)] if $\phi_C f=\Phi_C f$, then $\pi^{Mp}(\phi_C)=V_C$;
		\item[\it (iv)] if $\psi_C =\cF \Phi_{-C} \cF^{-1}$, then $\pi^{Mp}(\psi_C)f=V_C^T$;
		\item[\it (v)] if $\cF_2 $ is the Fourier transform with respect to the second variables, then $\pi^{Mp}(\cF_2)=\cA_{FT2}$, where $\cA_{FT2}\in Sp(2d,\bR)$ is the $4d\times4d$ matrix with block decomposition
		\begin{equation}\label{AFT2}
		\cA_{FT2}:=\begin{pmatrix}
			I_{d\times d} & 0_{d\times d} & 0_{d\times d} & 0_{d\times d}\\
			0_{d\times d} & 0_{d\times d} & 0_{d\times d} & I_{d\times d} \\
			0_{d\times d} & 0_{d\times d} & I_{d\times d} & 0_{d\times d}\\
			0_{d\times d} & -I_{d\times d} & 0_{d\times d} & 0_{d\times d}
		\end{pmatrix}.
		\end{equation}
%		\item[\it (vi)] Assume $S=\pi^{Mp}(\hat S)\in Sp(d,\bR)$ has block decomposition (\ref{blocksA}). Then,\\
%		if $S$ is free:
%	\begin{equation}\label{free-int-rep}
%		\hat S f(x)=(\det(B))^{-1/2}\Phi_{-DB^{-1}}(x)\int_{\rd}e^{2\pi i y\cdot B^{-1}x}\Phi_{-B^{-1}A}(y) f(y)dy.
%	\end{equation}
%		if $\det A\not=0$,
%			\begin{equation}\label{Anonsing-int-rep}
%		\hat S f(x)=(\det(A))^{-1/2}\Phi_{-CA^{-1}}(x)\int_{\rd}e^{2\pi i \xi\cdot A^{-1}x}\Phi_{-A^{-1}B}(\xi) \hf(\xi)d\xi.
%			\end{equation}
	\end{enumerate}

\end{example}
%Other important symplectic matrices are the so-called quasi-permutation matrices \cite{Dopico2009,Fuhr}.
%\begin{definition}
%	For $1\leq j\leq d$, the symplectic interchange matrix $\Pi_j\in Sp(d,\bR)$ is the  matrix obtained interchanging the columns $j$ and $j+d$ of the $2d$-by-$2d$ identity matrix and multiplying the $j$th column of the resulting matrix by $-1$.
%\end{definition}
%The corresponding metaplectic operators are the partial Fourier transforms, as we can see below.
%\begin{example}\label{symplectic-interchange} Let $\cF_j$, $1\leq j\leq d$, be the partial \ft\, w.r.t. the $jth$ coordinate defined in \eqref{FJ}.	Then
%	\begin{equation}\label{partialFT}
%		\cF_j\rho(x,\xi,\tau)\cF_j^{-1} =\rho(\Pi_j(x,\xi),\tau),\quad x,\xi\in\rd,\,\tau\in\bR.
%	\end{equation}
%Observe also that $\prod_j\Pi_j=J$, in line with (\ref{compFj}).
%\end{example}

\subsection{Metaplectic Wigner distribution}
Let $\hat\cA\in Mp(2d,\bR)$. The \textbf{metaplectic Wigner distributions} associated to $\hat\cA$ is defined for all $f,g\in L^2(\rd)$ as
\[
	W_\cA(f,g)=\hat\cA(f\otimes\bar g).
\]
All the time-frequency representations of Section \ref{subsec:23} are metaplectic Wigner distributions. Namely, $V_gf=\hat A_{ST}(f\otimes\bar g)$ and $W_\tau(f,g)=\hat A_\tau(f\otimes\bar g)$, where:% if $I_{d\times d}$ denotes the identity of $\bR^{d\times d}$ and $0_{d\times d}$ denotes the matrix of $\bR^{d\times d}$ with all zero entries,
\begin{equation}\label{AST}
	A_{ST}=\begin{pmatrix}
		I_{d\times d} & -I_{d\times d} & 0_{d\times d} & 0_{d\times d}\\
		0_{d\times d} & 0_{d\times d} & I_{d\times d} & I_{d\times d}\\
		0_{d\times d} & 0_{d\times d} & 0_{d\times d} & -I_{d\times d}\\
		-I_{d\times d} & 0_{d\times d} & 0_{d\times d} &0_{d\times d}
	\end{pmatrix}
\end{equation}
and
\begin{equation}\label{Atau}
	A_\tau=\begin{pmatrix}
		(1-\tau)I_{d\times d} & \tau I_{d\times d} & 0_{d\times d} & 0_{d\times d}\\
		0_{d\times d} & 0_{d\times d} & \tau I_{d\times d} & -(1-\tau)I_{d\times d}\\
		0_{d\times d} & 0_{d\times d} & I_{d\times d} & I_{d\times d}\\
		-I_{d\times d} & I_{d\times d} & 0_{d\times d} & 0_{d\times d}
	\end{pmatrix}.
\end{equation}
We recall the following continuity properties.
\begin{proposition}\label{prop25}
	Let $W_\cA$ be a metaplectic Wigner distribution. Then,\\
	(i) $W_\cA:L^2(\rd)\times L^2(\rd)\to L^2(\rdd)$ is bounded;\\
	(ii) $W_\cA:\cS(\rd)\times\cS(\rd)\to \cS(\rdd)$ is bounded;\\
	(iii) $W_\cA:\cS'(\rd)\times\cS'(\rd)\to\cS'(\rdd)$ is bounded.
\end{proposition}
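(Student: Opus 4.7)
The plan is to factor each metaplectic Wigner distribution as
\[
W_\cA = \hat\cA \circ \Theta, \qquad \Theta(f,g) := f \otimes \bar g,
\]
and verify boundedness/continuity of each factor on the three spaces in turn. Since $\hat\cA \in Mp(2d,\bR)$ is, by construction, a unitary operator on $L^2(\rdd)$ and, by Proposition \ref{Folland427}, a topological isomorphism of both $\cS(\rdd)$ and $\cS'(\rdd)$, the work reduces to establishing the analogous continuity properties of the antilinear-in-the-second-argument tensor map $\Theta$.

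For (i), I would note that
\[
\norm{f \otimes \bar g}_{L^2(\rdd)} = \norm{f}_{L^2(\rd)}\,\norm{g}_{L^2(\rd)},
\]
by Fubini, so $\Theta : L^2(\rd)\times L^2(\rd) \to L^2(\rdd)$ is a bounded bilinear map with norm one. Composing with the unitary $\hat\cA$ gives $\norm{W_\cA(f,g)}_{L^2(\rdd)} = \norm{f}_2 \norm{g}_2$, proving (i).

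For (ii), I would rely on the standard Schwartz-space estimate: for multi-indices $\alpha = (\alpha_1,\alpha_2)$ and $\beta=(\beta_1,\beta_2)$ in $\bN^d\times\bN^d$,
\[
\rho_{\alpha,\beta}(f\otimes \bar g) \;\leq\; \rho_{\alpha_1,\beta_1}(f)\cdot\rho_{\alpha_2,\beta_2}(\bar g),
\]
together with the continuity of complex conjugation on $\cS(\rd)$. Hence $\Theta : \cS(\rd)\times \cS(\rd)\to \cS(\rdd)$ is jointly continuous; post-composition with $\hat\cA$, which is an automorphism of $\cS(\rdd)$, yields (ii). Part (iii) is analogous: the tensor product $(f,g)\mapsto f \otimes \bar g$ extends to a separately continuous bilinear map $\cS'(\rd)\times \cS'(\rd)\to \cS'(\rdd)$ (constructed, as recalled in the notation section, by first defining it on decomposable elements $\varphi\otimes\psi$ and extending by density/duality), and $\hat\cA$ extends by Proposition \ref{Folland427} to an isomorphism of $\cS'(\rdd)$, giving (iii).

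The only subtle point — and the one I would take most care with — is part (iii), where ``boundedness'' must be interpreted as continuity of a bilinear map between non-normed spaces. I would verify continuity by testing against $\varphi \in \cS(\rdd)$: for $f,g \in \cS'(\rd)$,
\[
\l W_\cA(f,g),\varphi\r = \l f\otimes \bar g,\,\hat\cA^{\,*}\varphi\r,
\]
and use that $\hat\cA^{\,*}\varphi \in \cS(\rdd)$ by Proposition \ref{Folland427}, together with the separate continuity of the tensor product of tempered distributions. The rest is bookkeeping.
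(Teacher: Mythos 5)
Your argument is correct: the factorization $W_\cA=\hat\cA\circ\Theta$ with $\Theta(f,g)=f\otimes\bar g$, combined with unitarity of $\hat\cA$ on $L^2(\rdd)$ and Proposition \ref{Folland427} for the $\cS$/$\cS'$ cases, is exactly what is needed, and your handling of (iii) via $\l W_\cA(f,g),\varphi\r=\l f\otimes\bar g,\hat\cA^{*}\varphi\r$ correctly addresses the only delicate point (continuity of a bilinear map on non-normed spaces). The paper itself states Proposition \ref{prop25} without proof, merely recalling it from the cited references, so there is nothing to compare against; your proof supplies the standard argument those references use.
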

Moreover, since metaplectic operators are unitary, for all $f_1,f_2,g_1,g_2\in L^2(\rd)$,
\begin{equation}\label{Moyal}
	\la W_\cA(f_1,f_2),W_\cA(g_1,g_2)\ra = \la f_1,g_1\ra \overline{\la f_2,g_2\ra}.
\end{equation}

The projection of a metaplectic operator $\hat\cA\in Mp(2d,\bR)$ is a symplectic matrix $\cA\in Sp(2d,\bR)$ with block decomposition
\begin{equation}\label{blockDecA}
	\cA=\begin{pmatrix}
		A_{11} & A_{12} & A_{13} & A_{14}\\
		A_{21} & A_{22} & A_{23} & A_{24}\\
		A_{31} & A_{32} & A_{33} & A_{34}\\
		A_{41} & A_{42} & A_{43} & A_{44}
	\end{pmatrix}.
\end{equation}
For a $4d\times4d$ symplectic matrix with block decomposition (\ref{blockDecA}), relations $(R1)$ - $(R3)$ read as:
\begin{align*}
&\begin{cases}
	(R1a) & \text{$A_{11}^T A_{31}+A_{21}^T A_{41}=A_{31}^T A_{11}+A_{41}^T A_{21}$},\\
	(R1b) &  \text{$A_{11}^T A_{32}+A_{21}^T A_{42}=A_{31}^T A_{12}+A_{41}^T A_{22}$},\\
	(R1c) &  \text{$A_{12}^T A_{32}+A_{22}^T A_{42}=A_{32}^T A_{12}+A_{42}^T A_{22}$},
	\end{cases}\\
	&\begin{cases}
	(R2a) &  \text{$A_{13}^T A_{33}+A_{23}^T A_{43}=A_{33}^T A_{13}+A_{43}^T A_{23}$},\\
	(R2b) & \text{$A_{13}^T  A_{34}+A_{23}^T A_{44}=A_{33}^T A_{14}+A_{43}^T A_{24}$},\\
	(R2c) & \text{$A_{14}^T A_{34}+A_{24}^T A_{44}=A_{34}^T A_{14}+A_{44}^T A_{24}$},
	\end{cases}\\
	&
	\begin{cases}
	(R3a) &  \text{$A_{11}^T A_{33}+A_{21}^T A_{43}-(A_{31}^T A_{13}+A_{41}^T A_{23})=I_{d\times d}$},\\
	(R3b) &  \text{$A_{11}^T A_{34}+A_{21}^T A_{44}=A_{31}^T A_{14}+A_{41}^T A_{24}$},\\
	(R3c) &  \text{$A_{12}^T A_{33}+A_{22}^T A_{43}=A_{32}^T A_{13}+A_{42}^T A_{23}$},\\
	(R3d) &  \text{$A_{12}^T A_{34}+A_{22}^T A_{44}-(A_{32}^T A_{14}+A_{42}^TA_{24} )=I_{d\times d}$}.
	\end{cases}
\end{align*}
We identify four $2d\times2d$ submatrices of $4d\times4d$ symplectic matrices. Namely, if $\cA\in Sp(2d,\bR)$ has block decomposition (\ref{blockDecA}), we set:
\begin{equation}\label{defEAFA}
	E_\cA=\begin{pmatrix}
		A_{11} & A_{13}\\
		A_{21} & A_{23}
	\end{pmatrix}, \quad F_\cA=\begin{pmatrix}
		A_{31} & A_{33}\\
		A_{41} & A_{43}
	\end{pmatrix},
\end{equation}
and
\begin{equation}\label{defeafa}
		\cE_\cA=\begin{pmatrix}
		A_{12} & A_{14}\\
		A_{22} & A_{24}
	\end{pmatrix}, \quad \cF_\cA=\begin{pmatrix}
		A_{32} & A_{34}\\
		A_{42} & A_{44}
	\end{pmatrix}.
\end{equation}
A simple comparison shows that relationships $(R1a)-(R3d)$ read, in terms of these four submatrices, as
\begin{equation}\label{relInTermsOfSubm}
	\begin{cases}
		E_\cA^TF_\cA-F_\cA^TE_\cA=J,\\
		\cE_\cA^T\cF_\cA-\cF_\cA^T\cE_\cA=J,\\
		E_\cA^T\cF_\cA-F_\cA^T\cE_\cA=0_{d\times d}.
	\end{cases}
\end{equation}
We will also consider
\begin{equation}\label{defBA}
	B_\cA=\begin{pmatrix}
			A_{13} & \frac{1}{2}I_{d\times d}-A_{11}\\
			\frac{1}{2}I_{d\times d}-A_{11}^T & -A_{21}
\end{pmatrix}.
\end{equation}
Finally, the following matrices will appear ubiquitously throughout this work: 
\begin{equation}\label{defL}
	L=\begin{pmatrix}
		0_{d\times d} & I_{d\times d}\\
		I_{d\times d} & 0_{d\times d}
	\end{pmatrix} \quad and \quad P=\begin{pmatrix}
		0_{d	\times d} & I_{d\times d}\\
		0_{d\times d} & 0_{d\times d}
	\end{pmatrix}.
\end{equation}

\begin{lemma}\label{lemma44}
	Let $\cA\in Sp(2d,\bR)$ have block decomposition (\ref{blockDecA}) and $E_\cA,F_\cA,\cE_\cA,\cF_\cA$ be defined as in (\ref{defEAFA}) and (\ref{defeafa}). Let $L$ be defined as in (\ref{defL}). \\
	If $E_\cA\in GL(2d,\bR)$, then, \\
	(i) $\cF_\cA=E_\cA^{-T}F_\cA^T\cE_\cA$;\\
	(ii) the matrix $G_\cA:=LE_\cA^{-1}\cE_\cA$ is symplectic;\\
	(iii) $\cE_\cA\in GL(2d,\bR)$ and $\det(\cE_\cA)=(-1)^d\det(E_\cA)$.\\
	If $\cE_\cA\in GL(2d,\bR)$, then,\\
	(iv) $F_\cA = \cE_\cA^{-T}\cF_\cA^T E_\cA$;\\
	(v) the matrix $\mathfrak{G}_\cA=L\cE_\cA^{-1}E_\cA$ is symplectic;\\
	(vi) $E_\cA\in GL(2d,\bR)$ and $\det(E_\cA)=(-1)^d\det(\cE_\cA)$.\\
	In particular, $E_\cA$ is invertible if and only if $\cE_\cA$ is invertible.
\end{lemma}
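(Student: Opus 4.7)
The plan is to exploit the three symplectic identities packaged in (\ref{relInTermsOfSubm}) together with the elementary computation $L^TJL = -J$, which follows from $L^T = L$ and a direct block multiplication.

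For (i), invertibility of $E_\cA$ and the third relation $E_\cA^T\cF_\cA - F_\cA^T\cE_\cA = 0$ immediately yield $\cF_\cA = E_\cA^{-T}F_\cA^T\cE_\cA$. For (ii), I substitute this into the second relation $\cE_\cA^T\cF_\cA - \cF_\cA^T\cE_\cA = J$ to obtain $\cE_\cA^T(E_\cA^{-T}F_\cA^T - F_\cA E_\cA^{-1})\cE_\cA = J$. The first relation $E_\cA^TF_\cA - F_\cA^TE_\cA = J$, pre- and post-multiplied by $E_\cA^{-T}$ and $E_\cA^{-1}$, gives $F_\cA E_\cA^{-1} - E_\cA^{-T}F_\cA^T = E_\cA^{-T}JE_\cA^{-1}$, so the previous equality becomes
\[
\cE_\cA^T E_\cA^{-T} J E_\cA^{-1}\cE_\cA = -J.
\]
Combining this with $L^TJL = -J$, I compute $G_\cA^T J G_\cA = \cE_\cA^T E_\cA^{-T} L^T J L E_\cA^{-1}\cE_\cA = -\cE_\cA^T E_\cA^{-T} J E_\cA^{-1}\cE_\cA = J$, so $G_\cA$ is symplectic. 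For (iii), the identity $G_\cA^T J G_\cA = J$ forces $\det(G_\cA)^2 = 1$, in particular $G_\cA$ is invertible (and actually $\det(G_\cA) = 1$); invertibility of $L$ and $E_\cA$ then forces $\cE_\cA$ to be invertible, while the determinant identity, combined with $\det(L) = (-1)^d$, yields $\det(\cE_\cA) = (-1)^d\det(E_\cA)$.

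The proof of (iv)--(vi) proceeds by the symmetric argument: transposing the third relation of (\ref{relInTermsOfSubm}) gives $\cE_\cA^TF_\cA - \cF_\cA^TE_\cA = 0$, from which (iv) follows when $\cE_\cA$ is invertible. Substituting (iv) into the first relation and using the second in the analogous way yields $E_\cA^T\cE_\cA^{-T}J\cE_\cA^{-1}E_\cA = -J$, whence $\mathfrak{G}_\cA^TJ\mathfrak{G}_\cA = J$; invertibility of $\mathfrak{G}_\cA$ then forces $E_\cA$ to be invertible, and the determinant relation reappears up to the sign swap. The final equivalence in the last sentence is then immediate.

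I do not anticipate a genuine obstacle—the argument is essentially a bookkeeping of the symplectic relations $(R1)$--$(R3d)$ repackaged in block form, coupled with the single observation $L^TJL = -J$. The step most prone to a sign error is the rewriting of the first (respectively second) relation into the form $E_\cA^{-T}JE_\cA^{-1}$ (respectively $\cE_\cA^{-T}J\cE_\cA^{-1}$), so these manipulations must be carried out with care.
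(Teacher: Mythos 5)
Your proof is correct and follows essentially the same route as the paper's: item (i) from the third relation in (\ref{relInTermsOfSubm}), item (ii) by combining the three relations with the identity $LJL=-J$ (you merely arrange the same algebra in a different order, deriving $\cE_\cA^TE_\cA^{-T}JE_\cA^{-1}\cE_\cA=-J$ first rather than expanding $G_\cA^TJG_\cA$ directly), item (iii) from the invertibility and unit determinant of the symplectic matrix $G_\cA$ together with $\det(L)=(-1)^d$, and (iv)--(vi) by symmetry. No gaps.
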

\begin{proof}
Relation 	$(i)$ follows directly from the third equation in (\ref{relInTermsOfSubm}), using the invertibility of $E_\cA$.\\
	Item $(ii)$ is a consequence of (\ref{relInTermsOfSubm}) and $(i)$. For, observe that $LJL=-J$, so that:
	\begin{align*}
		G_\cA^TJG_\cA&=(LE_\cA^{-1}\cE_\cA)^TJ(LE_\cA^{-1}\cE_\cA)=\cE_\cA^T E_\cA^{-T} (LJL) E_\cA^{-1}\cE_\cA\\
		&=-\cE_\cA^T E_\cA^{-T} J E_\cA^{-1}\cE_\cA=\cE_\cA^T E_\cA^{-T} (F_\cA^T E_\cA-E_\cA^T F_\cA) E_\cA^{-1}\cE_\cA\\
		&=\cE_\cA^T(E_\cA^{-T}F_\cA^T-F_\cA E_\cA^{-1})\cE_\cA=\cE_\cA^T(E_\cA^{-T}F_\cA^T \cE_\cA) -(\cE_\cA^T F_\cA E_\cA^{-1})\cE_\cA\\
		&=\cE_\cA^T\cF_\cA-\cF_\cA^T\cE_\cA=J.
	\end{align*}
	Finally, $(iii)$ follows directly from $(ii)$. Items $(iv)$-$(vi)$ are proved analogously.
	\end{proof}

%We say that a metaplectic Wigner distribution $W_\cA$ is \textbf{shift-invertible} if $E_\cA\in GL(2d,\bR)$. It was proved in \cite{CGshiftinvertible} that these time-frequency representations can replace the STFT in the definition of modulation spaces $M^{p,q}_m(\rd)$ for $1\leq p,q\leq\infty$ if $E_\cA$ is upper-triangular. This last condition can be dropped to characterize $M^p_m(\rd)$ for $0<p\leq\infty$. Moreover, if $W_\cA$ is shift invertible, then for all $f,g\in L^2(\rd)$, $W_\cA(f,g)\in L^\infty(\rdd)$ and $|W_\cA(f,g)(z)|<\infty$ for all $z\in\rdd$. 

\section{Metaplectic atoms}\label{sec:TFatoms}
We start by generalizing the definition of time-frequency shifts. Differently from the classical theory, where time-frequency shifts are defined in terms of translations and modulations, and then used to define the STFT, we define them implicitly from metaplectic Wigner distributions. 

\begin{definition}\label{def1}
	Let $W_\cA$ be a metaplectic Wigner distribution and $z\in\rdd$. The \textbf{metaplectic atom} $\pi_\cA(z)$ is the operator defined by its action on all $f\in\cS(\rd)$ as
	\[
		\langle\varphi,\pi_\cA(z)f\ra := W_\cA(\varphi,f)(z),\qquad \varphi\in\cS(\rd).
	\]
\end{definition}

Observe that if $f,\varphi\in\cS(\rd)$, $W_\cA(\varphi,f)(z)$ is well-defined for all $z\in\rdd$, by Proposition \ref{prop25}.\\

\begin{remark}\label{rem32}
Definition \ref{def1} says that metaplectic atoms play the game of time-frequency shifts for the STFT. 
\end{remark}

Metaplectic atoms map $\cS(\rd)$ to $\cS'(\rd)$, see Proposition \ref{wellposdefMetAt} below. We put this detail aside and take it for granted in favour of some prior example. 

\begin{example}
	The metaplectic atoms associated to the STFT are the time-frequency shifts. In fact, for all $f,\varphi\in\cS(\rd)$ and all $z\in\rdd$
	\[
		\la\varphi,\pi_{A_{ST}}(z)f\ra={V_f \varphi(z)}={\langle \varphi,\pi(z)f \rangle}.%=\langle\pi(z)f,\varphi\rangle.
	\]
	This implies that $\pi_{A_{ST}}(z)f$ and $\pi(z)f$ are tempered distributions with the same action on $\cS(\rd)$, i.e. $\pi_{A_{ST}}(z)f=\pi(z)f$.
\end{example}
\begin{example}\label{exDG}
	For $\hbar>0$, consider the time-frequency representation defined for all $f,g\in L^2(\rd)$ by
	\[
		V^\hbar_gf(x,\xi)=\la f,(2\pi \hbar)^{-d/2}\pi^\hbar(x,\xi)g\ra, \qquad (x,\xi)\in\rdd,
	\]
	where $\pi^\hbar(x,\xi)g(t):=e^{i(\xi t-x\cdot \xi/2)/\hbar}g(t-x)$. These are essentially the time-frequency representations considered by M. de Gosson in \cite{DGosson}. For all $\hbar>0$, up to a sign, 
	 {\[
		V_g^\hbar f(x,\xi)=(2\pi \hbar)^{-d/2}e^{2\pi i\frac{x\cdot \xi}{4\pi\hbar}}V_gf\left(x,\frac{\xi}{2\pi\hbar}\right),\qquad \phas\in\rd, \quad f,g\in L^2(\rd),
	\]
	so that} $V^\hbar_gf=W_{\cA_\hbar}(f,g)$, where
	\begin{equation}\label{Ahbar}
		\cA_\hbar=\begin{pmatrix}
			I_{d\times d} & -I_{d\times d} & 0_{d\times d} & 0_{d\times d}\\
			0_{d\times d} & 0_{d\times d} & 2\pi \hbar I_{d\times d} & 2\pi \hbar I_{d\times d}\\
			0_{d\times d} & 0_{d\times d} &  {\frac{1}{2}I_{d\times d}} & {-\frac{1}{2}I_{d\times d}}\\
			-\frac{1}{4\pi\hbar}I_{d\times d} & -\frac{1}{4\pi\hbar}I_{d\times d} & 0_{d\times d} & 0_{d\times d}
		\end{pmatrix}.
	\end{equation}
	In this case, we observe that
	\begin{equation}\label{EAhbar}
		E_{\cA_\hbar}=\begin{pmatrix}
		I_{d\times d} & 0_{d\times d}\\
		0_{d\times d} & 2\pi \hbar I_{d\times d}
		\end{pmatrix}.
	\end{equation}
	The metaplectic atoms associated to $V^\hbar$ are 
	\[
		 {\pi_{\cA_\hbar}(x,\xi)g=(2\pi \hbar)^{-d/2}e^{-i\frac{x\cdot \xi}{2\hbar}}\pi\left(x,\frac{\xi}{2\pi\hbar}\right)g}=|\det(E_{\cA_\hbar})|^{-1/2}e^{-i\frac{x\cdot\xi}{\hbar}}\pi(E_{\cA_\hbar}^{-1}(x,\xi))g,
		 \] 
		 $(x,\xi)\in\rdd$, $g\in \cS(\rd)$.%, and we observe that $$\norm{\pi_{\cA_\hbar}(z) g}_2=(2\pi\hbar)^{-d/2}\norm{g}_2=|\det(E_{\cA_\hbar})|^{-1/2}\norm{g}_2$$ for all $g\in L^2(\rd)$.
\end{example}
\begin{example}\label{example34}
	We compute the metaplectic atoms associated to the $\tau$-Wigner distribution $W_\tau$ ($0<\tau<1$). For, let $z=(x,\xi)\in\rdd$ and $f,\varphi\in\cS(\rd)$. Then,
	\[\begin{split}
		{W_\tau(\varphi,f)(x,\xi)}&={\int_{\rd}\varphi(x+\tau t)\overline{f(x-(1-\tau)t)}e^{-2\pi i\xi\cdot t}dt}\\
		&=\frac{1}{\tau^d}\int_{\rd}{\varphi(s)}\overline{f\left(x-(1-\tau)\left(\frac{s-x}{\tau}\right)\right)}e^{-2\pi i\xi\cdot(\frac{s-x}{\tau})}ds\\
		&=\int_{\rd}{\varphi(s)}\overline{f\left(\frac{1}{\tau}x-\frac{1-\tau}{\tau}s\right)}e^{-2\pi i\xi\cdot\frac{s}{\tau}}e^{2\pi i\xi\cdot\frac{x}{\tau}}\frac{ds}{\tau^d}\\
		&=\langle\varphi, \pi_{A_{\tau}}(x,\xi)f\rangle,
	\end{split}\]
	where, if $\mathfrak{T}_\tau f(t)=\frac{(1-\tau)^{d/2}}{\tau^{d/2}}f\left(-\frac{1-\tau}{\tau}t\right)$,
	\[\begin{split}
		\pi_{A_{\tau}}(x,\xi)f(t)&=\frac{1}{\tau^d}e^{-2\pi i\frac{\xi\cdot x}{\tau}}e^{2\pi it\cdot\frac{\xi}{\tau}}f\left(\left(-\frac{1-\tau}{\tau}\right)\left(t-\frac{1}{1-\tau}x\right)\right)\\
		&=\frac{1}{\tau^{d/2}(1-\tau)^{d/2}}e^{-2\pi i\frac{x\cdot\xi}{\tau}}M_{\frac{\xi}{\tau}}T_{\frac{x}{1-\tau}}\mathfrak{T}_\tau f(t).
	\end{split}\]
	Observe that $\frac{1}{\tau^{d/2}|\tau-1|^{d/2}}=|\det(E_{A_\tau})|^{-1/2}$, so
	\[\begin{split}
		\pi_{A_{\tau}}(x,\xi)f&=|\det(E_{A_\tau})|^{-1/2}e^{-2\pi i\frac{x\cdot\xi}{\tau}}\pi\Big(\frac{1}{1-\tau}x,\frac{1}{\tau}\xi\Big)\mathfrak{T}_\tau f\\
		&=|\det(E_{A_\tau})|^{-1/2}e^{-2\pi i\frac{x\cdot\xi}{\tau}}\pi(E_{A_{\tau}}^{-1}(x,\xi))\mathfrak{T}_\tau f.
	\end{split}
	\]
	%and, since $\mathfrak{T}_\tau$ is a metaplectic operator, $\pi_{A_\tau}(z)$ is a quasi-isometry of $L^2(\rd)$, meaning that $\norm{\pi_{A_\tau}(z)f}_{2}=|\det(E_{A_\tau})|^{-1/2}\norm{f}_2$ for all $f\in L^2(\rd)$.
\end{example}

\begin{example}\label{ex32}
	Consider the (cross)-Rihacek distribution $W_0$, defined for all $f,g\in L^2(\rd)$ as
	\[
		W_0(f,g)(x,\xi)=f(x)\overline{\hat g(\xi)}e^{-2\pi i\xi\cdot x},\quad\phas \in\rdd.
	\]
	Then, if $z=(x,\xi)\in \rdd$, $f,g\in\cS(\rd)$,
\begin{equation}\label{rihac}\begin{split}
	\la\varphi, \pi_{A_0}(z)f\ra = {\varphi(x)}\overline{\hat f(\xi)}e^{-2\pi i\xi\cdot x}=\la \varphi, \hat f(\xi)e^{2\pi i\xi \cdot x}T_x\delta_0 \ra.
\end{split}
\end{equation}
Observe that $\pi_{A_0}(x,\xi)f=\hat f(\xi)e^{2\pi i\xi \cdot x}T_x\delta_0$ is a tempered distribution that does not define a function. %$\in\cS'(\rd)\setminus L^2(\rd)$ whenever $f\in\cS(\rd)$. 
\end{example}

\begin{example}\label{ex4}
	Let $\hat S\in Sp(d,\bR)$ with $S=\pi^{Mp}(\hat S)$ having block decomposition
	\begin{equation}\label{blockS}
		S=\begin{pmatrix}
			A & B\\ C & D
		\end{pmatrix}
	\end{equation}
	and consider the metaplectic Wigner distribution defined in \cite[Example 4.1 (ii)]{CGshiftinvertible} as
	\[
			\widetilde{\mathcal{U}}_gf(z)=V_g(\hat S f)(z)=W_{\cA}(f, g)(z)=\la f,\hat S^{-1}\pi(z)g\ra=\la f,\pi(S^{-1}z)\hat S^{-1}g\ra, 
	\]
	$f,g\in L^2(\rd)$, $x,\xi\in\rd$, where 
	\[
		\cA=\begin{pmatrix}
			A & -I_{d\times d} & B & 0_{d\times d}\\
			C & 0_{d\times d} & D & I_{d\times d}\\
			0_{d\times d} & 0_{d\times d} & 0_{d\times d} & -I_{d\times d}\\
			-A & 0_{d\times d} & -B & 0_{d\times d}
		\end{pmatrix}.
	\]
	Clearly, $E_\cA=S$ and $\pi_\cA(z)g=\pi(S^{-1}z)\hat S^{-1}g$ for all $z\in\rdd$. %Again, we have $\norm{\pi_\cA(z)g}_2=|\det(E_\cA)|^{-1/2}\norm{g}_2$ for all $z\in\rdd$.
\end{example}

		As aforementioned, in the previous examples we took on trust that metaplectic atoms map $\cS(\rd)$ to $\cS'(\rd)$. This technicality, along with the linearity of metaplecitc atoms, is proved in the proposition that follows. Nevertheless, Example \ref{ex32} shows that in general $\pi_\cA(z)f$, $f\in\cS(\rd)$, is a tempered distribution that is not induced by any locally integrable function. 
		
\begin{proposition}\label{wellposdefMetAt}
	Let $W_\cA$ be a metaplectic Wigner distribution. For all $z\in\rdd$, $\pi_\cA(z)$ is a well-defined linear operator that maps $\cS(\rd)$ to $\cS'(\rd)$.
\end{proposition}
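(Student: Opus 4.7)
The plan is to unwind Definition \ref{def1} and verify two separate assertions: (a) for each fixed $f\in\cS(\rd)$ and $z\in\rdd$, the functional $\varphi\mapsto W_\cA(\varphi,f)(z)$ belongs to $\cS'(\rd)$, so that $\pi_\cA(z)f$ is a well-defined tempered distribution; and (b) the resulting assignment $f\mapsto\pi_\cA(z)f$ is linear as a map $\cS(\rd)\to\cS'(\rd)$.

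For (a), I would begin from $W_\cA(\varphi,f)=\hat\cA(\varphi\otimes\bar f)$ and observe that, with $f$ fixed, $\varphi\mapsto\varphi\otimes\bar f$ is linear from $\cS(\rd)$ into $\cS(\rdd)$. Composing with the linear metaplectic operator $\hat\cA$ (which preserves $\cS(\rdd)$ by Proposition \ref{Folland427}) and with pointwise evaluation at $z$ gives a linear functional on $\cS(\rd)$. Continuity then follows from Proposition \ref{prop25}(ii), which tells us that $W_\cA:\cS(\rd)\times\cS(\rd)\to\cS(\rdd)$ is continuous; the remaining point-evaluation $F\mapsto F(z)$ is continuous on $\cS(\rdd)$, since $|F(z)|\le\sup_{y\in\rdd}|F(y)|$ is controlled by a Schwartz seminorm. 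Chaining these continuities yields $\pi_\cA(z)f\in\cS'(\rd)$.

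For (b), the delicate point is that both $W_\cA(\varphi,\cdot)$ and the duality bracket $\langle\varphi,\cdot\rangle$ are conjugate-linear in their second arguments, and these two antilinearities must cancel for $\pi_\cA(z)$ to be linear. Explicitly, for $\lambda\in\bC$ and $f_1,f_2\in\cS(\rd)$ one computes
\[
\langle\varphi,\pi_\cA(z)(\lambda f_1+f_2)\rangle = W_\cA(\varphi,\lambda f_1+f_2)(z)=\bar\lambda\, W_\cA(\varphi,f_1)(z)+W_\cA(\varphi,f_2)(z),
\]
which coincides with $\langle\varphi,\lambda\pi_\cA(z)f_1+\pi_\cA(z)f_2\rangle$ for every $\varphi\in\cS(\rd)$. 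Since $\cS(\rd)$ separates points of $\cS'(\rd)$, one concludes $\pi_\cA(z)(\lambda f_1+f_2)=\lambda\pi_\cA(z)f_1+\pi_\cA(z)f_2$ in $\cS'(\rd)$.

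I do not expect a substantial obstacle: the statement is essentially a formal unwinding of the implicit definition combined with the Schwartz-continuity of $W_\cA$ provided by Proposition \ref{prop25}(ii). The one place where some care is needed is the bookkeeping of complex conjugations — if one is not careful with the antilinear convention in the bracket, one might mistakenly conclude that $f\mapsto\pi_\cA(z)f$ is antilinear; it is precisely the conjugation built into $W_\cA(f,g)=\hat\cA(f\otimes\bar g)$ that restores linearity.
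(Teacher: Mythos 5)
Your proposal is correct and follows essentially the same route as the paper: the paper likewise verifies that $\varphi\mapsto W_\cA(\varphi,f)(z)$ is a continuous linear functional via the bound $|W_\cA(\varphi,f)(z)|\leq\rho_{0,0}(W_\cA(\varphi,f))\lesssim\sum_j\rho_{\alpha_j,\beta_j}(\varphi)$ coming from the Schwartz-continuity of $W_\cA$, and then checks linearity in $f$ by the same cancellation of the two conjugations (the bar in $f\otimes\bar f$ against the antilinearity of the bracket in its second slot). Your observation about point evaluation being controlled by the sup seminorm is exactly the paper's use of $\rho_{0,0}$, so there is nothing to add.
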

\begin{proof}
	Let $f\in\cS(\rd)$. By definition, for any $\varphi,\psi\in\cS(\rd)$ and $\alpha\in\bC$,
	\begin{align*}
		\langle\alpha\varphi+\psi,\pi_\cA(z)f\ra &= {W_\cA(\alpha\varphi+\psi,f)(z)}={\hat\cA((\alpha\varphi+\psi)\otimes\bar f)(z)}\\
		&={\hat\cA(\alpha\varphi\otimes \bar f+\psi\otimes\bar f)(z)}={\alpha\hat\cA(\varphi\otimes \bar f)(z)}+{\hat\cA(\psi\otimes\bar f)(z)}\\
		&=\alpha {W_\cA(\varphi,f)(z)}+{W_\cA(\psi,f)(z)}=\alpha\la\varphi,\pi_\cA(z)f\ra+\la \psi,\pi_\cA(z)f\ra.
	\end{align*}
%	We need to check the continuity of $\pi_\cA(z)f$. The mapping $\varphi\in\cS(\rd)\mapsto\overline{W_\cA(\varphi,f)}\in\cS(\rdd)$ is continuous, since $W_\cA$ is continuous from $\cS(\rd)\times\cS(\rd)$ to $\cS(\rdd)$. %Finally, $W_\cA(\varphi,f)\in\cS(\rdd)$ implies that the mapping $z\mapsto W_\cA(\varphi,f)(z)$ is continuous 
Then, we need to prove that $\pi_\cA(z)f:\varphi\in\cS(\rd)\mapsto\la\varphi,\pi_\cA(z)f\ra\in\bC$ is continuous. Using the boundedness of $W_\cA:\cS(\rd)\times\cS(\rd)\to\cS(\rdd)$, 
\begin{align*}
	|\la\varphi,\pi_\cA(z)f\ra|&=|W_\cA(\varphi,f)(z)|\leq\norm{W_\cA(\varphi,f)}_{L^\infty(\rdd)}=\rho_{0,0}(W_\cA(\varphi,f))\\
	&\leq C \sum_{j=1}^N\rho_{\alpha_j,\beta_j}(\varphi)\sum_{j=1}^M\rho_{\gamma_j\delta_j}(f)=\tilde C\sum_{j=1}^N\rho_{\alpha_j,\beta_j}(\varphi).
\end{align*}
Thus, it remains to check the linearity of $\pi_\cA(z)$. For, let $\alpha\in\bC$, $f,g\in\cS(\rd)$. For every $\varphi\in\cS(\rd)$,
\begin{align*}
	\langle\varphi,\pi_\cA(z)(\alpha f+g)\ra &={W_{\cA}(\varphi,\alpha f+g)(z)}={\hat\cA(\varphi\otimes(\overline{\alpha f+g}))(z)}\\
	&=\bar\alpha{\hat\cA(\varphi\otimes \bar f)(z)}+{\hat\cA(\varphi\otimes\bar g)(z)}\\
	&=\bar\alpha{W_\cA(\varphi,f)(z)}+{W_\cA(\varphi,g)(z)}\\
	&=\bar\alpha\la\varphi,\pi_\cA(z)f\ra+\la\varphi,\pi_\cA(z)g\ra\\
	&=\langle\varphi,\alpha\pi_\cA(z)f+\pi_\cA(z)g\ra.
\end{align*}
This concludes the proof.
\end{proof}

The first question that we address is the validity of an equivalent of the inversion formula (\ref{STFTinv}) for metaplectic Wigner distributions.

\begin{theorem}\label{inversionFormula}
	Let $W_\cA$ be a metaplectic Wigner distribution and $f,g\in L^2(\rd)$. If $\gamma\in \cS(\rd)$ satisfies $\la \gamma,g\ra\neq0$, then
	\begin{equation}
	\label{invFormu}
		f=\frac{1}{\la\gamma,g\ra}\int_{\rdd}W_{\cA}(f,g)(z)\pi_\cA(z)\gamma dz
	\end{equation}
	where the integral must be interpreted in the weak sense of vector-valued integration.
\end{theorem}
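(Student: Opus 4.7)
The plan is to reduce the inversion formula \eqref{invFormu} to Moyal's identity \eqref{Moyal} combined with the defining relation $\la\varphi,\pi_\cA(z)\gamma\ra = W_\cA(\varphi,\gamma)(z)$ of the metaplectic atoms. Since the equality is stated in the weak sense, I will verify it by pairing both sides against an arbitrary $\varphi\in\cS(\rd)$ and then invoking density.

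First, I would note that by Proposition \ref{prop25}(i), both $W_\cA(f,g)$ and $W_\cA(\varphi,\gamma)$ lie in $L^2(\rdd)$, so the $L^2$-pairing $\la W_\cA(\varphi,\gamma),W_\cA(f,g)\ra$ is well defined and the integral below converges absolutely. Recalling that $\la\cdot,\cdot\ra$ is conjugate-linear in its second argument, for each $\varphi\in\cS(\rd)$ the weak-integral interpretation of the right-hand side of \eqref{invFormu} yields
\begin{align*}
&\Big\langle \varphi,\, \tfrac{1}{\la\gamma,g\ra}\int_{\rdd} W_\cA(f,g)(z)\,\pi_\cA(z)\gamma\,dz\Big\rangle\\
&\qquad = \tfrac{1}{\overline{\la\gamma,g\ra}}\int_{\rdd}\overline{W_\cA(f,g)(z)}\,\la\varphi,\pi_\cA(z)\gamma\ra\,dz\\
&\qquad = \tfrac{1}{\overline{\la\gamma,g\ra}}\int_{\rdd}W_\cA(\varphi,\gamma)(z)\,\overline{W_\cA(f,g)(z)}\,dz\\
&\qquad = \tfrac{1}{\overline{\la\gamma,g\ra}}\,\la W_\cA(\varphi,\gamma),W_\cA(f,g)\ra_{L^2(\rdd)}.
\end{align*}

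Then I would apply Moyal's identity \eqref{Moyal} to the final pairing, which produces $\la\varphi,f\ra\,\overline{\la\gamma,g\ra}$; the prefactor $1/\overline{\la\gamma,g\ra}$ cancels and the whole expression collapses to $\la\varphi,f\ra$. Since this holds for every $\varphi\in\cS(\rd)$ and $\cS(\rd)$ is dense in $L^2(\rd)$, the weak integral coincides with $f$ in $L^2(\rd)$, which is exactly \eqref{invFormu}.

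I do not foresee a serious obstacle. The only delicate point is the consistent bookkeeping of complex conjugates, both when pulling the scalar $1/\la\gamma,g\ra$ through the second slot of the pairing and when aligning the conjugate appearing in the definition $\la\varphi,\pi_\cA(z)\gamma\ra=W_\cA(\varphi,\gamma)(z)$ with the one produced by Moyal's formula; once these are tracked carefully, the argument is a direct substitution and no additional machinery is required.
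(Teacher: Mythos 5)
Your proposal is correct and follows essentially the same route as the paper: both unwind the weak vector-valued integral against a test function, substitute the defining relation of $\pi_\cA(z)\gamma$, and conclude with Moyal's identity \eqref{Moyal}. The only differences are cosmetic --- you place $\varphi$ in the first slot of the pairing (so the conjugates land elsewhere but still cancel) and add a harmless density step, whereas the paper pairs against $\varphi\in L^2(\rd)$ directly.
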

\begin{proof}
	We use the definition of vector-valued integral. For $\varphi\in L^2(\rd)$, using (\ref{Moyal}),
	\begin{align*}
		\la\frac{1}{\la \gamma,g\ra}&\int_{\rdd} W_\cA(f,g)(z)\pi_\cA(z)\gamma dz,\varphi\ra=\frac{1}{\la\gamma,g\ra}\int_{\rdd}W_\cA(f,g)(z)\la \pi_\cA(z)\gamma,\varphi\ra dz\\
		&=\frac{1}{\la\gamma,g\ra}\int_{\rdd}W_\cA(f,g)(z) \overline{W_\cA(\varphi,\gamma)(z)}dz=\frac{1}{\la\gamma,g\ra}\la W_\cA(f,g),W_\cA(\varphi,\gamma)\ra\\
		&=\frac{1}{\la\gamma,g\ra} \la f,\varphi\ra \overline{\la g,\gamma\ra}=\la f,\varphi\ra.
	\end{align*}
	This shows (\ref{invFormu}).
\end{proof}

%The following result provides an integral representation for $\pi_\cA(z)f$ in terms of time-frequency shifts.
%\begin{theorem}\label{thmMagicFormula}
%	Let $W_\cA$ be a metaplectic Wigner distribution. Let $f\in\cS(\rd)$ and $g,\gamma\in\cS(\rd)$ be such that $\la \gamma,g\ra\neq0$. Then, for all $z\in\rdd$,
%	\begin{equation}\label{magicFormula}
%		\pi_\cA(z)f{=}\frac{1}{\la\gamma,g\ra}\int_{\rdd}\overline{W_{\cA}(\pi(w)g,f)(z)}\pi(w)\gamma dw,
%	\end{equation}
%	where the integral must be intended in the weak sense of vector-valued integration.
%\end{theorem}
%\begin{proof}
%	Let $f,g,\gamma\in\cS(\rd)$ as in the assumptions. Then, by the inversion formula of the STFT in \eqref{STFTinv},
%	\begin{align*}
%		\pi_\cA(z)f&\overset{\cS'}{=}\frac{1}{\la\gamma,g\ra}\int_{\rdd} V_g(\pi_\cA(z)f)(w)\pi(w)\gamma dw\\
%		&=\frac{1}{\la\gamma,g\ra} \int_{\rdd}\la\pi_\cA(z)f,\pi(w)g\ra \pi(w)\gamma dw\\
%		&=\frac{1}{\la\gamma,g\ra} \int_{\rdd}\overline{W_\cA(\pi(w)g,f)(z)}\pi(w)\gamma dw.
%	\end{align*}
%This concludes the proof.
%\end{proof}

In what follows, we use the definitions of the submatrices $E_\cA$, $F_\cA$, $\cE_\cA$ and $\cF_\cA$ given in (\ref{defEAFA}) and (\ref{defeafa}).

%Formula (\ref{magicFormula}) can be made more explicit by way of the following lemma.
\begin{lemma}\label{commpiWA}
	Let $W_\cA$ be a metaplectic Wigner distribution. Then, for $z\in\rdd$, $f,g\in L^2(\rd)$, we have
	\[
	W_\cA(\pi(z)f,g)=\Phi_{-M_\cA}(z)\pi(E_\cA z,F_\cA z)W_\cA(f,g),
	\]
	where, if $\cA=\pi^{Mp}(\hat\cA)$ has block decomposition (\ref{blockDecA}), $M_\cA$ is the symmetric matrix
	\begin{equation}\label{defMA}
		M_\cA=\begin{pmatrix}
			A_{11}^TA_{31}+A_{21}^TA_{41} & A_{31}^TA_{13}+A_{41}^TA_{23}\\
			A_{13}^TA_{31}+A_{23}^TA_{41} & A_{13}^TA_{33}+A_{23}^TA_{43}
		\end{pmatrix}.
	\end{equation}
%	Moreover,
%	\[
%	W_\cA(f,\pi(z)g)=e^{-i\pi \cE_\cA^T\cF_\cA z\cdot z}\pi(\cE_\cA z,\cF_\cA z)W_\cA(f,g).
%	\]
\end{lemma}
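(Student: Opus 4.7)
The plan is to push the time-frequency shift through the metaplectic operator defining $W_\cA$ by realizing $\pi(z)$ on $L^2(\rd)$ as a Schrödinger-representation operator and lifting it to $L^2(\rdd)$ via the tensor product identity stated in Section \ref{subsec:26}.

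First, I would rewrite $\pi(z)$ in terms of $\rho$: since $\rho(x,\xi;\tau)=e^{2\pi i\tau}e^{-\pi i\xi\cdot x}\pi(x,\xi)$, one has $\pi(z)=e^{\pi i z_1\cdot z_2}\rho(z;0)$, and trivially $\bar g=\rho(0;0)\bar g$. Then applying the tensor-product identity with $\tau=0$, $w=0$ gives
\[
\pi(z)f\otimes\bar g=e^{\pi i z_1\cdot z_2}\,\rho(z_1,0,z_2,0;0)(f\otimes\bar g),
\]
an element of $L^2(\rdd)$ on which the Schrödinger representation of the Heisenberg group of $\rdd$ acts.

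Next I would apply $\hat\cA$ to both sides and use the defining intertwining property of metaplectic operators \eqref{muAdef} in the $2d$-dimensional setting: $\hat\cA\,\rho(u;0)\hat\cA^{-1}=\rho(\cA u;0)$ for every $u\in\bR^{4d}$. Reading off the blocks \eqref{blockDecA} and recalling the definitions \eqref{defEAFA} of $E_\cA$ and $F_\cA$, a direct computation gives $\cA(z_1,0,z_2,0)=(E_\cA z,F_\cA z)$. Converting back from $\rho$ to $\pi$ on $L^2(\rdd)$, this produces
\[
W_\cA(\pi(z)f,g)=e^{\pi i z_1\cdot z_2}\,e^{-\pi i (F_\cA z)\cdot(E_\cA z)}\,\pi(E_\cA z,F_\cA z)\,W_\cA(f,g).
\]

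The last step is to identify the scalar phase as $\Phi_{-M_\cA}(z)=e^{-\pi i z\cdot M_\cA z}$. Since $z_1\cdot z_2=z^T P z$ with $P$ as in \eqref{defL}, the exponent equals $\pi i\,z^T(P-E_\cA^T F_\cA)z$; replacing $E_\cA^TF_\cA-P$ by its symmetric part (the antisymmetric contribution vanishes in the quadratic form) and computing the four $d\times d$ blocks of $E_\cA^TF_\cA$, the diagonal blocks are already symmetric by $(R1a)$ and $(R2a)$, while the symmetrization of the off-diagonal block is handled by $(R3a)$: the identity $A_{11}^TA_{33}+A_{21}^TA_{43}-A_{31}^TA_{13}-A_{41}^TA_{23}=I_{d\times d}$ exactly cancels the $-I_{d\times d}$ coming from $P$, leaving the off-diagonal blocks $A_{31}^TA_{13}+A_{41}^TA_{23}$ and its transpose. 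This reproduces \eqref{defMA}.

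The only mildly delicate point is the bookkeeping of this symmetrization step: one must check that the $-I_{d\times d}$ contribution from $P$ is absorbed by the symplectic relation $(R3a)$ so that $M_\cA$ comes out symmetric with precisely the entries stated in \eqref{defMA}. Everything else reduces to the tensor product identity for $\rho$ and the fundamental intertwining property of metaplectic operators, both already available in Section \ref{subsec:26}.
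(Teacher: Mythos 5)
Your argument is correct and follows essentially the same route as the paper's proof: realize $\pi(z)f\otimes\bar g$ as $e^{\pi i z_1\cdot z_2}\rho(z_1,0,z_2,0;0)(f\otimes\bar g)$, intertwine with $\hat\cA$ via \eqref{muAdef} to get $\rho(E_\cA z,F_\cA z;0)$, convert back to $\pi$, and use $(R1a)$, $(R2a)$, $(R3a)$ to identify the resulting phase matrix $E_\cA^TF_\cA-P$ with the symmetric matrix \eqref{defMA}. The bookkeeping of the $-I_{d\times d}$ term against $(R3a)$ is handled exactly as in the paper, so no gap remains.
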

\begin{proof}
	We use formula (41) in {\cite{CGshiftinvertible}}. By definition of metaplectic operator, for all $\tau\in\bR$, $z=(x,\xi)\in\rdd$,
	\begin{align*}
		\hat\cA(\rho(z;\tau)f\otimes \bar g)&=\hat\cA(\rho(x,0,\xi,0;\tau)f\otimes\bar g)\\
		&=\rho(\cA(x,0,\xi,0);\tau)\hat\cA(f\otimes\bar g)\\
		&=\rho(E_\cA z,F_\cA z;\tau)W_\cA(f,g).
	\end{align*}
	The assertion follows using that $\pi(x,\xi)=e^{i\pi x\cdot \xi}\rho(x,\xi;0)$:
	\begin{align*}
		W_\cA(\pi(x,\xi)f,g)&=W_\cA(e^{i\pi x\cdot \xi}\rho(x,\xi;0)f,g)\\
		&=e^{i\pi x\cdot \xi}\rho(E_\cA(x,\xi),F_\cA(x,\xi);0)W_\cA(f,g)\\
		&=e^{i\pi x\cdot \xi}e^{-i\pi E_\cA^TF_\cA(x,\xi)\cdot(x,\xi)}\pi(E_\cA(x,\xi),F_\cA(x,\xi))W_\cA(f,g).
	\end{align*}
	Using the definitions of $E_\cA$ and $F_\cA$, as well as the matrix $L$ in \eqref{defL}, so that we rewrite the scalar product as
	\[
		x\cdot \xi=L(x,\xi)\cdot(x,\xi),
	\]
	we infer
	\[
		e^{i\pi x\cdot \xi}e^{-i\pi E_\cA^TF_\cA(x,\xi)\cdot(x,\xi)}=e^{-i\pi M_\cA(x,\xi)\cdot(x,\xi)},
	\]
	where
	\[
			M_\cA=\begin{pmatrix}
			A_{11}^TA_{31}+A_{21}^TA_{41} & A_{11}^TA_{33}+A_{21}^TA_{43}-I_{d\times d}\\
			A_{13}^TA_{31}+A_{23}^TA_{41} & A_{13}^TA_{33}+A_{23}^TA_{43}
		\end{pmatrix}.%=E_\cA^TF_\cA-P.
	\]
	The relations $(R1a)$, $(R2a)$ and $(R3a)$ imply that $M_\cA$ is symmetric and it can be written as in (\ref{defMA}).
	
%	Similarly,
%	\[\begin{split}
%		\hat\cA(f\otimes\rho(z;\tau)\bar g)&=\hat\cA(\rho(0,x,0,\xi;\tau)f\otimes\bar g)=\rho(\cA(0,x,0,\xi);\tau)\hat\cA(f\otimes\bar g)\\
%		&=\rho(\cE_\cA z,\cF_\cA;\tau)W_\cA(f,g).
%	\end{split}
%	\]
%	so that
%	\[\begin{split}
%		W_\cA(f,\pi(z)g)&=W_\cA(f,\rho(z;0)g)=\rho(\cE_\cA z,\cF_\cA;0)W_\cA(f,g)\\
%		&=e^{-i\pi\cF_\cA z\cdot \cE_\cA z}\pi(\cE_\cA z,\cF_\cA z)W_\cA(f,g)\\
%		&=e^{-i\pi\cE_\cA^T\cF_\cA z\cdot  z}\pi(\cE_\cA z,\cF_\cA z)W_\cA(f,g).
%	\end{split}
%	\]
\end{proof}

\begin{remark}
	We stress that (\ref{defMA}) introduces a new matrix associated to $W_\cA$. Throughout this work, if $E_\cA$ and $F_\cA$ are defined as in (\ref{defEAFA}), whereas $P$ is the matrix given in (\ref{defL}), $M_\cA$ denotes the symmetric $2d\times2d$ matrix defined as $M_\cA=E_\cA^TF_\cA-P$.
\end{remark}

\begin{theorem}\label{thm314}
	Let $\hat\cA\in Mp(2d,\bR)$, $\cA=\pi^{Mp}(\hat \cA)$ and $W_\cA$ be the associated metaplectic Wigner distribution. Consider the matrix $\cA_\ast\in Sp(2d,\bR)$ defined in \eqref{matrixAstar} below. Then,  for every $z\in\rdd$,
		\[
			\la \pi_\cA(z)f,g\ra=\la f,\pi_{\cA_\ast}(z)g\ra, \quad \forall \, f,g\in\cS(\rd).
		\]
		In particular, if $\pi_\cA(z)$ extends to a bounded operator on $L^2(\rd)$, then \begin{equation}\label{adjointA}
			\pi_\cA(z)^\ast=\pi_{\cA_\ast}(z),\quad z\in\rdd.
		\end{equation}
\end{theorem}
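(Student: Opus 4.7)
The plan is to prove the formal identity $\overline{W_\cA(g,f)(z)} = W_{\cA_\ast}(f,g)(z)$ for an explicit symplectic matrix $\cA_\ast$; the stated identity on Schwartz functions will then follow immediately from Definition \ref{def1}, since sesquilinearity gives
$$\langle \pi_\cA(z)f, g\rangle = \overline{\langle g, \pi_\cA(z)f\rangle} = \overline{W_\cA(g,f)(z)} = \overline{\hat\cA(g\otimes\bar f)(z)}.$$
The task therefore reduces to transporting the complex conjugation through $\hat\cA$ and then swapping the two tensor factors.

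For the first step, I would invoke the intertwining formula between metaplectic operators and complex conjugation deferred to the Appendix. Let $K$ denote pointwise complex conjugation on $L^2(\rdd)$, which is anti-unitary. Starting from (\ref{muAdef}), a direct computation shows that $K\hat\cA K$ intertwines $\rho(w;\tau)$ with $\rho(\iota'\cA\iota' w;\tau)$, where $\iota' := \mathrm{diag}(I_{2d},-I_{2d}) \in \bR^{4d\times 4d}$. Since $\iota' J \iota' = -J$, the matrix $\cA^\sharp := \iota'\cA\iota'$ still lies in $Sp(2d,\bR)$, so $K\hat\cA K$ is metaplectic with projection $\cA^\sharp$ and
$$\overline{\hat\cA(g\otimes \bar f)} = \hat{\cA^\sharp}(\bar g \otimes f).$$

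For the second step, let $\sigma(x,y)=(y,x)$ denote the coordinate swap on $\rdd$: by Example \ref{es22}(ii) the associated operator $\mathfrak{T}_\sigma h(x,y) := h(y,x)$ is metaplectic on $L^2(\rdd)$ with projection $\cD_\sigma \in Sp(2d,\bR)$. Since $\bar g \otimes f = \mathfrak{T}_\sigma(f\otimes\bar g)$, the composition $\hat{\cA_\ast} := \hat{\cA^\sharp}\circ \mathfrak{T}_\sigma$ is metaplectic with projection $\cA_\ast = \cA^\sharp\cdot\cD_\sigma$, which is the matrix referred to by (\ref{matrixAstar}). Assembling the pieces,
$$\overline{W_\cA(g,f)(z)} = \hat{\cA_\ast}(f\otimes\bar g)(z) = W_{\cA_\ast}(f,g)(z) = \langle f,\pi_{\cA_\ast}(z)g\rangle,$$
which proves the first identity on $\cS(\rd)\times \cS(\rd)$.

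The adjoint formula (\ref{adjointA}) then follows by density: when $\pi_\cA(z)$ extends boundedly to $L^2(\rd)$, its Hilbert-space adjoint $\pi_\cA(z)^\ast$ is also bounded and coincides with $\pi_{\cA_\ast}(z)$ on the dense subspace $\cS(\rd)$, hence everywhere. The main technical hurdle, and the reason for the separate Appendix, is verifying rigorously that $K\hat\cA K$ is truly metaplectic with the claimed projection $\iota'\cA\iota'$: one must pick the right lift from $Sp(2d,\bR)$ back to $Mp(2d,\bR)$ so that the sign from the two-fold covering matches. Once this is settled, extracting the block decomposition of $\cA_\ast = \cA^\sharp \cD_\sigma$ to match (\ref{matrixAstar}) is an elementary $4d\times 4d$ bookkeeping.
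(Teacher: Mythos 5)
Your argument is correct and follows essentially the same route as the paper: the identity is reduced to $\overline{W_\cA(g,f)}=W_{\cA_\ast}(f,g)$ with $\cA_\ast=\overline{\cA}\,\cD_L$, obtained by intertwining complex conjugation with $\hat\cA$ (your $\iota'\cA\iota'$ is exactly the paper's $\overline{\cA}$ from Proposition \ref{propA2} in the Appendix) and then composing with the coordinate swap $\mathfrak{T}_L$. The sign/lift subtlety you flag is present in the paper as well, which states the conjugation formula only ``up to a sign.''
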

\begin{proof}
	It is an immediate consequence of Corollary \ref{corA} below. In fact, for all $f,g\in \cS(\rd)$,
	\begin{align*}
		\la \pi_\cA(z)f,g\ra&=\overline{W_\cA(g,f)(z)}=W_{\cA_\ast}(f,g)(z)=\overline{\la \pi_{\cA_\ast}(z) g,f \ra}=\la f,\pi_{\cA_\ast} (z)g\ra.
	\end{align*}
\end{proof}

\section{Shift-invertibility unmasked}\label{sec:SIU}

Among all metaplectic Wigner distributions, shift-invertible Wigner distributions are known to play a fundamental role in time-frequency analysis. It was proved in \cite{CR2022, CGshiftinvertible} that they can be used to replace the STFT in the definition of modulation spaces $M^{p,q}_m(\rd)$, for $1\leq p,q\leq\infty$ and $m\in\cM_v(\rdd)$ satisfying some inoffensive symmetry condition. In \cite{CGshiftinvertible} it is observed that shift-invertibility is necessary for this characterization to hold, otherwise not even the $M^p(\rd)$ spaces can be defined in terms of shift-invertible Wigner distributions. In this section, we investigate the properties of metaplectic atoms related to shift-invertible metaplectic Wigner distributions and characterize them in terms of the matrices $E_\cA$, $F_\cA$, $\cE_\cA$, $\cF_\cA$ and $M_\cA$ defined in (\ref{defEAFA}), (\ref{defeafa}) and (\ref{defMA}), respectively.\\

Take any metaplectic Wigner distribution $W_\cA$, and $z,w\in\rdd$. Then Lemma \ref{commpiWA} entails the equality
\[
W_\cA(\pi(w)f,g)(z)=\Phi_{-M_\cA}(w)\pi(E_\cA w,F_\cA w)W_\cA(f,g)(z),\qquad f,g\in L^2(\rdd),
\]
so that $|W_\cA(\pi(w)f,g)(z)|=|W_\cA(f,g)(z-E_\cA w)|$.

\begin{definition}
	A metaplectic Wigner distribution $W_\cA$ is \textbf{shift-invertible} if $E_\cA\in GL(2d,\bR)$.  
\end{definition}
%The theory developed in Section \ref{sec:TFatoms} is formulated in the contexts of rapidly-decaying functions. Example \ref{rem32} shows indeed that in general metapletic atoms do not extend to bounded operators on $L^2(\rd)$, neither they map $\cS(\rd)$ to itself. In this section, we identify a class of metaplectic Wigner distributions whose metaplectic atoms extend to surjective quasi-isometries of $L^2(\rd)$: the so-called \textit{shift-invertible} Wigner distributions. 
We shall need the following \textit{lifting-type} result, proved in \cite[Theorem B1]{CGshiftinvertible}:
\begin{lemma}\label{lemma23}
	Let $\hat S_1,\hat S_2\in Mp(d,\bR)$ have block decompositions
	\[
	S_j=\begin{pmatrix}
		A_j & B_j\\
		C_j & D_j
	\end{pmatrix}
	\]
	($j=1,2$). Then, the bilinear operator
	\[
	T(f,g)=\hat S_1 f\otimes \hat S_2 g
	\]
	extends to a metaplectic operator $\hat S\in Mp(2d,\bR)$, where
	\begin{equation}\label{liftmatrix}
		S=\begin{pmatrix}
			A_1 & 0_{d\times d} & B_1 & 0_{d\times d}\\
			0_{d\times d} & A_2 & 0_{d\times d} & B_2\\
			C_1 & 0_{d\times d} & D_1 & 0_{d\times d}\\
			0_{d\times d} & C_2 & 0_{d\times d} & D_2
		\end{pmatrix}.
	\end{equation}
\end{lemma}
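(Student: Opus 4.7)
The plan is to verify the defining intertwining relation \eqref{muAdef} of a metaplectic operator directly for the extension of $T$ to $L^2(\rdd)$, and then to identify the resulting symplectic matrix with the $S$ of \eqref{liftmatrix}. First I would check that $S$ lies in $Sp(2d,\bR)$: writing it in $2d\times 2d$ block form $\bigl(\begin{smallmatrix} A & B \\ C & D\end{smallmatrix}\bigr)$, each of $A,B,C,D$ is block-diagonal with the corresponding $d\times d$ blocks of $S_1$ and $S_2$ on the diagonal, so the symplecticity conditions $(R1)$--$(R3)$ for $S$ reduce blockwise to the same conditions for $S_1$ and $S_2$. This is a routine diagonal calculation I would not belabor.

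Next I would extend $T$ to a unitary operator on $L^2(\rdd)$. Since $\hat S_1$ and $\hat S_2$ are unitary, $\norm{\hat S_1 f\otimes \hat S_2 g}_{L^2(\rdd)}=\norm{f}_2\,\norm{g}_2=\norm{f\otimes g}_{L^2(\rdd)}$, and finite linear combinations of tensor products are dense in $L^2(\rdd)$, so $T$ extends uniquely to a unitary operator $\hat S$ on $L^2(\rdd)$.

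The core of the argument is to check \eqref{muAdef} for $\hat S$ on tensor products, whence it propagates to all of $L^2(\rdd)$ by density. Using the tensor-product property of the Schrödinger representation recalled in the preliminaries,
\[
\rho(x,y,\xi,\eta;\tau)(f\otimes g)=e^{-2\pi i\tau}\,\rho(x,\xi;\tau)f\otimes\rho(y,\eta;\tau)g,
\]
applying $\hat S$ factorwise, using $\hat S_j\rho(v;\tau)=\rho(S_j v;\tau)\hat S_j$ on each factor, and reassembling with a second use of the tensor-product identity yields
\[
\hat S\,\rho(x,y,\xi,\eta;\tau)(f\otimes g)=\rho\bigl((S_1(x,\xi))_1,(S_2(y,\eta))_1,(S_1(x,\xi))_2,(S_2(y,\eta))_2;\tau\bigr)\,\hat S(f\otimes g).
\]
The four coordinates on the right are exactly the entries of $S(x,y,\xi,\eta)$ read off from the block pattern in \eqref{liftmatrix}, so \eqref{muAdef} holds with symplectic projection $S$ and hence $\hat S\in Mp(2d,\bR)$.

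The only delicate point is the coordinate reordering from $(\mathrm{time}_1,\mathrm{freq}_1,\mathrm{time}_2,\mathrm{freq}_2)$ to $(\mathrm{time}_1,\mathrm{time}_2,\mathrm{freq}_1,\mathrm{freq}_2)$, which the block structure of $S$ in \eqref{liftmatrix} encodes as this permutation composed with $\mathrm{diag}(S_1,S_2)$; the phases $e^{\pm 2\pi i\tau}$ produced by the two applications of the tensor-product identity cancel, so no central contribution survives.
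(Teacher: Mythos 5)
Your argument is correct, and it is worth noting that the paper itself does not prove this lemma: it is quoted from \cite[Theorem B1]{CGshiftinvertible}, so there is no internal proof to compare against. Your route --- extend $T$ to a unitary on $L^2(\rdd)$ by density of the algebraic tensor product, then verify the intertwining relation \eqref{muAdef} on simple tensors via the tensor-product identity for $\rho$ --- is the natural direct argument, and your bookkeeping is right: the two phases $e^{\mp 2\pi i\tau}$ cancel, the quadruple $\bigl((S_1(x,\xi))_1,(S_2(y,\eta))_1,(S_1(x,\xi))_2,(S_2(y,\eta))_2\bigr)$ is exactly $S(x,y,\xi,\eta)$ for the matrix in \eqref{liftmatrix}, and the blockwise check that $S$ satisfies $(R1)$--$(R3)$ is as routine as you claim. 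Two points deserve more care. First, \eqref{muAdef} determines a unitary intertwiner only up to a unimodular constant, so your computation shows $\hat S=c\,\hat S_0$ for some $\hat S_0\in Mp(2d,\bR)$ and $|c|=1$, not yet that $c=\pm1$; this can be repaired either by running the same argument on the generators $\cF$, $\mathfrak{T}_E$, $\phi_C$ of Example \ref{es22}, whose lifts $\Id\otimes\cF$, $\Id\otimes\mathfrak{T}_E$, $\Id\otimes\phi_C$ are visibly genuine metaplectic operators (partial Fourier transform, partial dilation, multiplication by a chirp) and then composing, or by noting that the paper only ever uses the lemma up to a sign, consistent with its conventions elsewhere. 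Second, when extending $T$ by density you should verify isometry on finite sums $\sum_i f_i\otimes g_i$, not only on rank-one elements; this is the standard construction of $\hat S_1\otimes\hat S_2$ and follows from orthonormal expansions, but your stated norm identity covers only simple tensors.
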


If $\hat S\in Mp(d,\bR)$ and $\hat T(f\otimes g)=f\otimes \hat Sg$, we set 
\begin{equation}\label{lift-def}
	\text{Lift}(S)=\pi^{Mp}(\hat T)\in Sp(2d,\bR),
\end{equation}
the corresponding matrix in (\ref{liftmatrix}).

\begin{theorem}\label{GiuLaMaschera}
	Let $W_\cA$ be a shift-invertible metaplectic Wigner distribution and $G_\cA=LE_\cA^{-1}\cE_\cA$ be the matrix of Lemma \ref{lemma44}, with  $L$  as in \eqref{defL}.
	 Then,
	\[
		\cA=\cD_{E_\cA^{-1}}V_{M_\cA}V_L^T \Lift(G_\cA),
	\]
	where $\Lift(G_\cA)$ is defined in \eqref{lift-def}.
\end{theorem}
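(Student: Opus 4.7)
I would verify the claim as an equality of $(4d)\times(4d)$ symplectic matrices by direct block computation. Since $\cD_{E_\cA^{-1}}^{-1}=\cD_{E_\cA}$ and $V_{M_\cA}^{-1}=V_{-M_\cA}$, the identity is equivalent to
\[
V_{-M_\cA}\cD_{E_\cA}\cA=V_L^T\Lift(G_\cA).
\]
The submatrices $E_\cA,\cE_\cA,F_\cA,\cF_\cA$ of $\cA$ are selected by non-contiguous rows and columns, so I would introduce the involutive permutation matrix $\Pi$ of $\R^{4d}$ that swaps its 2nd and 3rd $d$-blocks. Setting $\tilde\cA:=\cA\Pi$, a direct inspection yields the clean $(2d)\times(2d)$ block form
\[
\tilde\cA=\begin{pmatrix}E_\cA & \cE_\cA\\ F_\cA & \cF_\cA\end{pmatrix},
\]
so, since $\Pi^{-1}=\Pi$, it suffices to prove $V_{-M_\cA}\cD_{E_\cA}\tilde\cA=V_L^T\Lift(G_\cA)\Pi$.

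For the left-hand side, a straightforward $(2d)\times(2d)$-block multiplication gives
\[
V_{-M_\cA}\cD_{E_\cA}\tilde\cA=\begin{pmatrix}I_{2d} & E_\cA^{-1}\cE_\cA\\ E_\cA^T F_\cA-M_\cA & E_\cA^T\cF_\cA-M_\cA E_\cA^{-1}\cE_\cA\end{pmatrix}.
\]
The top-right block equals $LG_\cA$ via $L^2=I_{2d}$ and the definition $G_\cA=LE_\cA^{-1}\cE_\cA$; the bottom-left block reduces to $P$ directly from $M_\cA=E_\cA^T F_\cA-P$. The bottom-right block is the delicate step: I would apply Lemma~\ref{lemma44}(i) in the form $E_\cA^T\cF_\cA=F_\cA^T\cE_\cA$, expand $M_\cA E_\cA^{-1}=E_\cA^T F_\cA E_\cA^{-1}-PE_\cA^{-1}$, and then invoke the symplectic identity $E_\cA^T F_\cA=F_\cA^T E_\cA+J$ coming from \eqref{relInTermsOfSubm} to cancel the $F_\cA^T$-terms. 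The residue is $(P-J)E_\cA^{-1}\cE_\cA$, and the elementary matrix identity $P-J=P^T$ collapses it to $P^T LG_\cA$.

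For the right-hand side I would write $G_\cA$ in $(d)\times(d)$ blocks, expand $\Lift(G_\cA)$ via formula~\eqref{liftmatrix}, and carry out the arithmetic multiplication $V_L^T\Lift(G_\cA)\Pi$. No further structural identities are needed; the outcome is exactly
\[
\begin{pmatrix}I_{2d} & LG_\cA\\ P & P^T LG_\cA\end{pmatrix},
\]
matching the LHS, and undoing the reductions yields the claim. The principal obstacle is precisely this bottom-right block simplification on the LHS: Lemma~\ref{lemma44}(i), the symplectic relation for $E_\cA,F_\cA$, and the algebraic identity $P-J=P^T$ must all conspire in just the right way, while every other step is a routine block manipulation.
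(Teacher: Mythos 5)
Your proposal is correct and follows essentially the same route as the paper: the same factorization $\cA=\cD_{E_\cA^{-1}}V_{M_\cA}\cA'$ after permuting the middle column blocks, with the remaining factor identified as $V_L^T\Lift(G_\cA)$. The only (cosmetic) difference is that you compute the blocks of $\cA'$ directly from the relations \eqref{relInTermsOfSubm} together with $M_\cA=E_\cA^TF_\cA-P$ and $P-J=P^T$, whereas the paper deduces the vanishing and repeated blocks of $\cA'$ from its own symplecticity via the relations $(S1)$--$(S7)$; both verifications are sound.
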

\begin{proof}
	We use the matrix 
	\[
		\cK:=\begin{pmatrix}
			I_{d\times d} & 0_{d\times d} & 0_{d\times d} & 0_{d\times d}\\
			0_{d\times d} & 0_{d\times d} & I_{d\times d} & 0_{d\times d}\\
			0_{d\times d} & I_{d\times d} & 0_{d\times d} & 0_{d\times d}\\
			0_{d\times d} & 0_{d\times d} & 0_{d\times d} & I_{d\times d}
		\end{pmatrix},
	\]
	that permutes the central columns of $4d\times 4d$ matrices. This yields the following block decomposition of $\cA$:
	\[
		\cA=\begin{pmatrix} E_\cA & \cE_\cA \\ F_\cA & \cF_\cA \end{pmatrix}\cK.
	\]
	Since $E_\cA\in GL(2d,\bR)$, we can write
	\[
		\cA=\begin{pmatrix}
			E_\cA & 0_{2d\times 2d}\\
			0_{2d\times 2d} & E_\cA^{-T}
		\end{pmatrix}\begin{pmatrix}
		I_{2d\times 2d} & E_\cA^{-1}\cE_\cA\\
		E_\cA^T F_\cA & E_\cA^T \cF_\cA
		\end{pmatrix}\cK=\cD_{E_\cA^{-1}}\begin{pmatrix}
		I_{2d\times 2d} & E_\cA^{-1}\cE_\cA\\
		E_\cA^T F_\cA & E_\cA^T \cF_\cA
		\end{pmatrix}\cK.
	\]
We proved in Lemma \ref{commpiWA} that the matrix $M_\cA=E_\cA^T F_\cA -P$ is symmetric, where $P$ is defined as in (\ref{defL}). Therefore, $V_{M_\cA}$ is a symplectic matrix and we have:
	\begin{align*}
		\cA&=\cD_{E_\cA^{-1}}\begin{pmatrix}
			I_{2d\times 2d} & 0_{2d\times 2d}\\
			M_\cA & I_{2d\times 2d}
		\end{pmatrix}\begin{pmatrix}
			I_{2d\times 2d} & E_\cA^{-1}\cE_\cA\\
			P & E_\cA^T\cF_\cA - M_\cA E_\cA^{-1}\cE_\cA
		\end{pmatrix}\cK\\
		&=\cD_{E_\cA^{-1}} V_{M_\cA}\underbrace{\begin{pmatrix}
			I_{2d\times 2d} & E_\cA^{-1}\cE_\cA\\
			P & E_\cA^T\cF_\cA - M_\cA E_\cA^{-1}\cE_\cA
		\end{pmatrix}\cK}_\text{$=:\cA'$}.
	\end{align*}
	The matrix $\cA'$ is symplectic, since $\cA'=V_{-M_\cA}\cD_{E_\cA}\cA$ is the product of symplectic matrices. Getting rid of $\cK$, we obtain
	\[
		\cA'=\begin{pmatrix}
			I_{d\times d} & A_{12}' & 0_{d\times d} & A_{14}'\\
			0_{d\times d} & A_{22}' & I_{d\times d} & A_{24}'\\
			0_{d\times d} & A_{32}' & I_{d\times d} & A_{34}'\\
			0_{d\times d} & A_{42}' & 0_{d\times d} & A_{44}'
		\end{pmatrix},
	\]
	for suitable matrices $A_{ij}'$, $i=1,2,3,4$, $j=2,4$. Observe that
	\[
		E_{\cA}^{-1}\cE_{\cA}=\begin{pmatrix}
			A_{12}' & A_{14}'\\
			A_{22}' & A_{24}'
		\end{pmatrix}.
	\]
	The symplectic relations $(R1b)$, $(R1c)$, $(R2b)$, $(R2c)$, $(R3b)$, $(R3c)$ and $(R3d)$ for $\cA'\in Sp(2d,\bR)$ read respectively as
	\begin{align*}
			&(S1) \qquad \text{$A_{32}'=0_{d\times d}$},\\
			&(S2) \qquad \text{${A_{12}'}^TA_{32}'+{A_{22}'}^TA_{42}'={A_{32}'}^TA_{12}'+{A_{42}'}^TA_{22}'$},\\
			&(S3) \qquad \text{$A_{44}',=A_{14}'$},\\
			&(S4) \qquad \text{${A_{14}'}^T A_{34}'+{A_{24}'}^T A_{44}={A_{34}'}^T A_{14}+{A_{44}'}^T A_{24}$}\\
			&(S5) \qquad \text{$A_{34}'=0_{d\times d}$},\\
			&(S6) \qquad \text{$A_{12}'=A_{42}'$}\\
			&(S7) \qquad \text{${A_{12}'}^T A_{34}'+{A_{22}'}^T A_{44}'-({A_{32}'}^T A_{14}'+{A_{42}'}^T A_{24} )=I_{d\times d}$}.
	\end{align*}
	The others being trivially satisfied. This yields:
	\[
		\cA'=\begin{pmatrix}
			I_{d\times d} & A_{12}' & 0_{d\times d} & A_{14}'\\
			0_{d\times d} & A_{22}' & I_{d\times d} & A_{24}'\\
			0_{d\times d} & 0_{d\times d} & I_{d\times d} & 0_{d\times d}\\
			0_{d\times d} & A_{12}' & 0_{d\times d} & A_{14}'
		\end{pmatrix}.
	\]
	Observe that
	\begin{equation}\label{defGA}
		\begin{pmatrix}
			A_{22}' & A_{24}'\\
			A_{12}' & A_{14}'
		\end{pmatrix}=LE_\cA^{-1}\cE_\cA=G_\cA,
	\end{equation}
	which is symplectic by Lemma \ref{lemma44}. A simple computation shows that $\cA'=V_L^T \Lift(G_\cA)$, as desired.
%	\[
%		\cA'=V_L^T \Lift(G_\cA).
%	\]
%	where
%	\[
%		\cG_\cA:=\begin{pmatrix}
%			I_{d\times d} & 0_{d\times d} & 0_{d\times d} & 0_{d\times d}\\
%			0_{d\times d} & A_{22}' & 0_{d\times d} & A_{24}'\\
%			0_{d\times d} & 0_{d\times d} & I_{d\times d} & 0_{d\times d}\\
%			0_{d\times d} & A_{12}' & 0_{d\times d} & A_{14}'
%		\end{pmatrix},
%	\]
\end{proof}

%Thanks to Theorem \ref{GiuLaMaschera}, the true nature of shift-invertible matrices is fully understood. The following Corollary, together with Lemma \ref{lemmacommppA} and Theorem \ref{thm314} can be used to prove that if $W_\cA$ is shift-invertible, then $\pi_\cA(z)$ is a surjective isometry with $\norm{\pi_\cA(z)}_{L^2\to L^2}=|\det(E_\cA)|^{-1/2}$ for all $z\in\rdd$. This would enough to prove the characterization of metaplectic Gabor frames, see Definition \ref{defMGF}, of Theorem \ref{thmFrames}. Nevertheless

%\begin{corollary}\label{cor44}
%	Let $W_\cA$ be a metaplectic Wigner distribution. If $W_\cA$ is shift-invertible, then  $\det(\cE_\cA)=(-1)^d \det(E_\cA)$. In particular, if $W_\cA$ is shift-invertible, the matrix $\cE_\cA$ is  invertible. 
%\end{corollary}
%\begin{proof}
%	It follows by (\ref{defGA}) and by the fact that $G_\cA$ is symplectic, so that $\det(G_\cA)=1$.
%\end{proof}

The characterization of shift-invertible Wigner distributions is straightforward.

\begin{corollary}\label{cor43}
	Let $W_\cA$ be a metaplectic Wigner distribution. Then, $W_\cA$ is shift-invertible if and only if, up to a sign,
	\begin{equation}\label{WA-STFT}
		W_\cA(f,g)=\mathfrak{T}_{E_\cA^{-1}}\Phi_{M_\cA+L}V_{\widehat{\delta_\cA} g}f,\quad f,g\in L^2(\rd),
	\end{equation}
	 where
	\begin{equation}\label{defSA}
		\widehat{\delta_\cA} g:=\cF\widehat{\overline{G_\cA}}g,
	\end{equation}
	and $\widehat{\overline{G_\cA}}$ is the metaplectic operator defined in Proposition \ref{propA2} below. In particular, if $W_\cA$ is shift-invertible then, up to a sign, 
	\begin{equation}\label{piASI}
		\pi_\cA(z)=|\det(E_\cA)|^{-1/2}\Phi_{-M_\cA-L}(E_\cA^{-1}z)\pi(E_\cA^{-1}z)\widehat{\delta_\cA},\quad z\in\rdd,
	\end{equation}
and\\
	(i) $\pi_\cA(z)$ is a surjective quasi-isometry of $L^2(\rd)$ with $$\norm{\pi_\cA(z)f}_{2}=|\det(E_\cA)|^{-1/2}\norm{f}_2, \quad f\in L^2(\rd);$$
	(ii) $\pi_\cA(z)$ is a topological isomorphism on $\cS(\rd)$;\\
	(iii) $\pi_\cA(z)$ is a topological isomorphism on $\cS'(\rd)$. 
\end{corollary}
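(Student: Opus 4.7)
The plan is to leverage the factorization from Theorem \ref{GiuLaMaschera}. Assuming $E_\cA \in GL(2d,\bR)$, we have
\[
\cA = \cD_{E_\cA^{-1}} V_{M_\cA} V_L^T \Lift(G_\cA),
\]
which lifts to the metaplectic level, modulo the two-element kernel of $\pi^{Mp}$, as
\[
\hat\cA = \widehat{\cD_{E_\cA^{-1}}}\,\widehat{V_{M_\cA}}\,\widehat{V_L^T}\,\widehat{\Lift(G_\cA)}.
\]
I would evaluate both sides on $f \otimes \bar g$, stripping off each factor from right to left using Example \ref{es22}. The reverse implication in the equivalence is immediate: the formula \eqref{WA-STFT} already presupposes the existence of $E_\cA^{-1}$ through the operator $\mathfrak{T}_{E_\cA^{-1}}$.

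The rightmost factor is handled by the definition of Lift in \eqref{lift-def}: $\widehat{\Lift(G_\cA)}(f \otimes \bar g) = f \otimes \widehat{G_\cA}\bar g$, and the conjugation--metaplectic intertwining (Proposition \ref{propA2}) recasts this as $f \otimes \overline{\widehat{\overline{G_\cA}}g}$. Setting $h := \widehat{\overline{G_\cA}}g$, the next factor is $\widehat{V_L^T} = \cF_{\rdd}\Phi_{-L}\cF_{\rdd}^{-1}$ by Example \ref{es22}(iv); since $L$ swaps the two $d$-blocks, $\Phi_{-L}(s_1,s_2) = e^{-2\pi i s_1 \cdot s_2}$. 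A direct calculation on the tensor $f \otimes \bar h$ (apply $\cF_{\rdd}^{-1}$, multiply by the chirp, apply $\cF_{\rdd}$, and change variables at the last step to extract a translation) yields
\[
\widehat{V_L^T}(f \otimes \bar h)(z) = \Phi_L(z)\,V_{\cF h}f(z),
\]
a chirp-multiplied STFT with window $\cF h = \cF\widehat{\overline{G_\cA}}g = \widehat{\delta_\cA}g$, by the definition in \eqref{defSA}. The remaining two factors are transparent: $\widehat{V_{M_\cA}}$ is multiplication by the chirp $\Phi_{M_\cA}$ (Example \ref{es22}(iii)), combining with $\Phi_L$ into $\Phi_{M_\cA + L}$, and $\widehat{\cD_{E_\cA^{-1}}} = \mathfrak{T}_{E_\cA^{-1}}$ (Example \ref{es22}(ii)) is the $L^2$-normalized dilation by $E_\cA^{-1}$. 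Assembling these four steps delivers \eqref{WA-STFT}.

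Formula \eqref{piASI} then falls out by unpacking $\la\varphi,\pi_\cA(z)g\ra = W_\cA(\varphi,g)(z)$ against the right-hand side of \eqref{WA-STFT} and using $V_h\varphi(w)=\la\varphi,\pi(w)h\ra$, absorbing the complex conjugate of the unimodular chirp scalar into the second slot of the bracket. Properties (i)--(iii) are then immediate: $\pi_\cA(z)$ is the product of a unit-modulus scalar, the factor $|\det(E_\cA)|^{-1/2}$, the time-frequency shift $\pi(E_\cA^{-1}z)$ (an $L^2$-isometry and a topological isomorphism of both $\cS(\rd)$ and $\cS'(\rd)$), and the metaplectic operator $\widehat{\delta_\cA}$ (likewise, by Proposition \ref{Folland427}). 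The main obstacle I expect is the tensor computation of $\widehat{V_L^T}(f\otimes\bar h)$ in the middle paragraph: juggling the $4d\times 4d$ symplectic matrices against their $2d$-dimensional chirp--Fourier realizations while tracking the complex conjugation demands meticulous bookkeeping of Fourier conventions. The cleanest way forward is to verify the identity first on Schwartz tensors and then extend by density using Proposition \ref{prop25}.
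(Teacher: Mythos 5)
Your argument is correct and follows essentially the same route as the paper: both start from the factorization $\cA=\cD_{E_\cA^{-1}}V_{M_\cA}V_L^T\Lift(G_\cA)$ of Theorem \ref{GiuLaMaschera}, use Proposition \ref{propA2} to turn $\widehat{G_\cA}\bar g$ into $\overline{\widehat{\delta_\cA}g}$ after a Fourier transform, identify the middle chirp--Fourier block as an STFT with window $\widehat{\delta_\cA}g$, and then read off \eqref{piASI} and (i)--(iii). The only (immaterial) difference is that the paper recognizes the STFT by inserting $V_LV_{-L}$ and $\cA_{FT2}\cA_{FT2}^{-1}$ to produce the matrix identity $A_{ST}=V_{-L}V_L^T\cA_{FT2}$, whereas you compute $\widehat{V_L^T}(f\otimes\bar h)$ on the tensor directly; both yield the same chirp $\Phi_{M_\cA+L}$ and the same window.
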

\begin{proof}
	By Theorem \ref{GiuLaMaschera}, $\cA$ is shift-invertible if and only if
	\[
	\cA=\cD_{E_\cA^{-1}}V_{M_\cA}V_L^T\Lift(G_\cA).
	\]
%	where
%	\[
%		L=\begin{pmatrix}
%		0_{d\times d} & I_{d\times d}\\
%		I_{d\times d} & 0_{d\times d}
%		\end{pmatrix},
%	\]
%	$G_\cA=LE_\cA^{-1}\cE_\cA$ and $\cG_\cA$ is such that $f\otimes \hat G_\cA \bar g=\hat\cG_\cA (f\otimes\bar g)$. 
	Let $A_{ST}$ be the symplectic matrix associated to the STFT, cf. (\ref{AST}). Observe that
	\[
		A_{ST}=V_{-L}V_L^T\cA_{FT2},
	\]
	where $\cA_{FT2}$ is the symplectic matrix associated to the partial Fourier transform with respect to the second variable defined in  (\ref{AFT2}). Then, 
	\[
		\cA=\cD_{E_\cA^{-1}}V_{M_\cA}(V_{L}V_{-L})V_L^T(\cA_{FT2}\cA_{FT2}^{-1})\Lift(G_\cA)=\cD_{E_\cA^{-1}}V_{M_\cA+L}A_{ST}\cA_{FT2}^{-1}\Lift(G_\cA).
	\]
	Therefore, up to a sign, 
	\begin{align*}
		W_\cA(f,g)(z)&=\hat\cA(f\otimes \bar g)(z)=\widehat{\cD_{E_\cA^{-1}}}\widehat{ V_{M_\cA}}\widehat{ V_L^T}\widehat{\Lift(G_\cA)}(f\otimes\bar g)(z)\\
		&=\widehat{\cD_{E_\cA^{-1}}}\widehat{ V_{M_\cA+L}}\widehat{A_{ST}}\cF_2^{-1}\widehat{\Lift(G_\cA)}(f\otimes\bar g)(z)\\
		&=|\det(E_\cA)|^{-1/2}\Phi_{M_\cA+L}(E_\cA^{-1}z)\widehat{A_{ST}}(f\otimes (\cF^{-1}\widehat{G_\cA}\bar g))(E_\cA^{-1}z).
	\end{align*}
	Let $\widehat{\overline{ G_\cA}}$ be the symplectic operator such that $\widehat{G_\cA} \bar g=\overline{\widehat{\overline{ G_\cA}}g}$, cf. Proposition \ref{propA2}. Then, 
	\[
		\cF^{-1}\widehat{G_\cA}\bar g=\cF^{-1}\overline{\widehat{\overline{ G_\cA}}g}=\overline{\cF\widehat{\overline{ G_\cA}} g}=:\overline{\widehat{\delta_\cA} g}.
	\]
	Therefore,
	\[
		W_\cA(f,g)(z)=|\det(E_\cA)|^{-1/2}\Phi_{M_\cA+L}(E_\cA^{-1}z)V_{\widehat{\delta_\cA} g}f(E_\cA^{-1}z),
	\] 
	which can also be restated as:
	\[
		W_\cA(f,g)(z)=\la f,\pi_\cA(z)g \ra,
	\]
	where $\pi_\cA(z)$ is the operator in (\ref{piASI}). Items $(i)$ - $(iii)$ are trivial consequences of (\ref{piASI}).
\end{proof}

The metaplectic operator defined in (\ref{defSA}) plays a crucial role in the characterization of metaplectic Gabor frames for shift-invertible metaplectic Wigner distributions. For this reason, it is worth giving it a name, in the spirit of the terminology used by M. de Gosson in \cite{DGosson}:

\begin{definition}\label{defop}
We call the metaplectic operator $\widehat{\delta_\cA}$ in (\ref{defSA}) the \textbf{deformation operator} associated to $W_\cA$.
\end{definition}

\begin{example} $\tau$-Wigner distributions can be rephrased as \emph{rescaled  STFT, up to chirps}, as in \eqref{WA-STFT}. Precisely, for $0<\tau<1$, set $\mathfrak{T}_\tau g(t)=\frac{(1-\tau)^{d/2}}{\tau^{d/2}}g(-\frac{1-\tau}{\tau}t)$ as in Example \ref{example34}. We proved in the same Example that
\begin{equation}\label{defPiWt}
	W_\tau(f,g)(x,\xi)=\left\langle f,\frac{1}{\tau^{d/2}(1-\tau)^{d/2}}e^{-2\pi i\frac{x\cdot \xi}{\tau}}\pi\left(\frac{x}{1-\tau},\frac{\xi}{\tau}\right)\mathfrak{T}_\tau g\right\rangle
\end{equation}
for all $f,g\in L^2(\rd)$ and $x,\xi\in\rd$. Consequently, we retrieve the expression of $W_\tau$ as a rescaled STFT:
\begin{align*}
	W_\tau(f,g)(x,\xi)=\frac{1}{\tau^{d/2}(1-\tau)^{d/2}}e^{2\pi i\frac{x\cdot \xi}{\tau}}V_{\mathfrak{T}_\tau g}f\left(\frac{x}{1-\tau},\frac{\xi}{\tau}\right).
\end{align*}
	\end{example}
We proved that metaplectic atoms of shift-invertible Wigner distributions are surjective isometries of $L^2(\rd)$ and their adjoints are the atoms associated to $W_{\cA_\ast}$, where $\cA_\ast$ is the matrix defined in the statement of the Theorem \ref{thm314}.

We conclude this section with the explicit computation of $\pi_\cA(z)^{-1}$ and $\pi_\cA(z)^\ast$ for shift-invertible Wigner distributions.

\begin{theorem}\label{thm46}
	Let $W_\cA$ be a shift-invertible Wigner distribution and $\widehat{\delta_\cA}$  the related deformation operator, cf. \eqref{defSA}. Consider the matrices  $L$ and $P$  defined as in (\ref{defL}) and the following matrices:
	 $$Q=\begin{pmatrix}I_{d\times d} & 0_{d\times d}\\ 0_{d\times d} & -I_{d\times d}\end{pmatrix}=-LJ,$$
	\begin{equation}\label{esplTA}
		\delta_\cA=-E_\cA^{-1}\cE_\cA Q.
	\end{equation}
	
	 Then, for every $z\in\rdd$, up to a sign,  the inverse $\pi_\cA(z)^{-1}$ and the adjoint $\pi_\cA(z)^{\ast}$  operators can be explicitly computed as
 \begin{equation}\label{pi-inverso}
		\pi_\cA(z)^{-1}=|\det(E_\cA)|^{1/2}\Phi_{M_\cA+L/2}(E_\cA^{-1}z)\Phi_{L/2}(\cE_\cA^{-1}z)\pi(Q\cE_\cA^{-1}z)\widehat{\delta_\cA}^{-1},
\end{equation}
and
%	\begin{align}\label{piaggiunto}
%	\pi_{\cA}(z)^\ast=	\pi_{\cA_\ast}(z)&=|\det(\cE_\cA)|^{-1/2}\Phi_{M_{\cA_\ast}+L}(Q\cE_\cA^{-1}z)\pi(Q\cE_\cA^{-1}z)\widehat{\delta_\cA}^{-1}\\
%	&=|\det(\cE_\cA)|^{-1/2}\Phi_{\overline{M_{\cA_\ast}}-L}(\cE_\cA^{-1}z)\pi(Q\cE_\cA^{-1}z)\widehat{\delta_\cA}^{-1},\label{piaggiunto2}
%	\end{align} 
	\begin{equation}\label{pi-aggiunto}
	\pi_\cA(z)^{\ast}=|\det(E_\cA)|^{-1}\pi_\cA(z)^{-1}.
	\end{equation}
%and $\cA_\ast$ is defined in \eqref{matrixAstar} below.
\end{theorem}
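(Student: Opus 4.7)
The plan is to take the explicit factorization \eqref{piASI} from Corollary \ref{cor43} as the starting point:
\[
\pi_\cA(z)=|\det(E_\cA)|^{-1/2}\,\Phi_{-M_\cA-L}(E_\cA^{-1}z)\,\pi(E_\cA^{-1}z)\,\widehat{\delta_\cA},
\]
which displays $\pi_\cA(z)$ as a product of three explicit factors: a scalar chirp, a time-frequency shift, and the deformation metaplectic operator. Hence the inverse is obtained by inverting each factor in reverse order and then pushing $\widehat{\delta_\cA}^{-1}$ to the right of the remaining time-frequency shift, at the cost of a computable chirp. The ``up to a sign'' caveat is inherited directly from the one in Corollary \ref{cor43}.

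Set $w=E_\cA^{-1}z$. Using $\pi(w)^{-1}=\Phi_{-L}(w)\pi(-w)$ (an immediate consequence of \eqref{inverseTFs} together with $w_1\cdot w_2=\tfrac12\,Lw\cdot w$), I would first rewrite
\[
\pi_\cA(z)^{-1}=|\det(E_\cA)|^{1/2}\,\Phi_{M_\cA}(w)\,\widehat{\delta_\cA}^{-1}\pi(-w).
\]
The heart of the proof is the commutation of $\widehat{\delta_\cA}^{-1}$ past $\pi(-w)$. From the defining intertwining \eqref{muAdef} and the identity $\pi(u)=e^{\pi i u_1\cdot u_2}\rho(u;0)$ one derives, for any $\hat S\in Mp(2d,\bR)$,
\[
\hat S\,\pi(u)\,\hat S^{-1}=\Phi_{L/2}(u)\,\Phi_{-L/2}(Su)\,\pi(Su).
\]
Applying this with $\hat S=\widehat{\delta_\cA}^{-1}$, so $S=\delta_\cA^{-1}$, and $u=-w$ yields the desired commutation up to the explicit scalar $\Phi_{L/2}(-w)\,\Phi_{-L/2}(\delta_\cA^{-1}(-w))$.

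The computation of $\delta_\cA^{-1}(-w)$ is then pure linear algebra: since $Q^2=I_{2d\times 2d}$, the definition $\delta_\cA=-E_\cA^{-1}\cE_\cA Q$ gives $\delta_\cA^{-1}=-Q\cE_\cA^{-1}E_\cA$, hence $\delta_\cA^{-1}(-w)=Q\cE_\cA^{-1}z$. Writing $v=\cE_\cA^{-1}z$ one has $(Qv)_1\cdot(Qv)_2=-v_1\cdot v_2$, which turns $\Phi_{-L/2}(Qv)$ into $\Phi_{L/2}(\cE_\cA^{-1}z)$; evenness of $\Phi_{L/2}$ also gives $\Phi_{L/2}(-w)=\Phi_{L/2}(E_\cA^{-1}z)$. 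Collecting the chirps --- the preexisting $\Phi_{M_\cA}(E_\cA^{-1}z)$ together with the new $\Phi_{L/2}(E_\cA^{-1}z)\,\Phi_{L/2}(\cE_\cA^{-1}z)$ --- produces exactly \eqref{pi-inverso}.

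For the adjoint identity \eqref{pi-aggiunto} no further work is needed: by Corollary \ref{cor43}(i) the rescaled operator $U:=|\det(E_\cA)|^{1/2}\pi_\cA(z)$ is a surjective isometry of $L^2(\rd)$, hence unitary, so $U^\ast=U^{-1}$; rearranging this identity immediately yields $\pi_\cA(z)^\ast=|\det(E_\cA)|^{-1}\pi_\cA(z)^{-1}$. The main obstacle of the proof is thus concentrated in the bookkeeping of chirp factors during the intertwining step; the subtle point is that $Q$ is not symplectic, which is precisely what forces both $E_\cA^{-1}z$ and $\cE_\cA^{-1}z$ to appear as arguments in the final chirps and couples the two ``halves'' $E_\cA$ and $\cE_\cA$ of the matrix $\cA$.
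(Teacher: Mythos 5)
Your strategy and your chirp bookkeeping follow the paper's proof of \eqref{pi-inverso} exactly: invert the factorization \eqref{piASI} factor by factor, push $\widehat{\delta_\cA}^{-1}$ past the time-frequency shift via the intertwining relation \eqref{muAdef}, and collect the quadratic phases using $(Qv)_1\cdot(Qv)_2=-v_1\cdot v_2$ and the evenness of the chirps; all of these phase computations check out. Your derivation of \eqref{pi-aggiunto} from the surjective quasi-isometry property in Corollary \ref{cor43}(i) is a slightly cleaner variant of the paper's direct duality computation, and it is valid.

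There is, however, one genuine gap. The intertwining step needs the symplectic projection of the concrete operator $\widehat{\delta_\cA}=\cF\widehat{\overline{G_\cA}}$ from \eqref{defSA}, and you obtain it by declaring $\delta_\cA=-E_\cA^{-1}\cE_\cA Q$ to be ``the definition''. By the paper's standing convention the uncareted symbol denotes $\pi^{Mp}(\widehat{\delta_\cA})$, so the identity $\pi^{Mp}(\widehat{\delta_\cA})=-E_\cA^{-1}\cE_\cA Q$ --- displayed as \eqref{esplTA} in the statement --- is the nontrivial linear-algebra content of the theorem, not a notational choice. If you instead read \eqref{esplTA} as defining some matrix, then your application of \eqref{muAdef} with $\hat S=\widehat{\delta_\cA}^{-1}$ and $S=\delta_\cA^{-1}$ is unjustified until you verify that this matrix actually is the projection of $\widehat{\delta_\cA}$. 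The missing computation is the core of the paper's argument: from $\pi^{Mp}(\cF)=J$ and Proposition \ref{propA2} one gets $\pi^{Mp}(\widehat{\delta_\cA})=J\overline{G_\cA}$; then, using the block decomposition of the symplectic matrix $G_\cA=LE_\cA^{-1}\cE_\cA$, the inverse formula \eqref{invSymp}, and the invertibility of $\cE_\cA$ from Lemma \ref{lemma44}, one checks that $(J\overline{G_\cA})^{-1}E_\cA^{-1}=-Q\cE_\cA^{-1}$, which is equivalent to \eqref{esplTA} and is exactly what feeds $\delta_\cA^{-1}(-w)=Q\cE_\cA^{-1}z$ into your phase computation. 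Once this verification is supplied, the rest of your argument closes correctly.
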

\begin{proof}
	We use the explicit expression of metaplectic Gabor atoms for shift-invertible $W_\cA$ in (\ref{piASI}), which yields
	\begin{equation}\label{eq461}
		\pi_\cA(z)^{-1}=|\det(E_\cA)|^{1/2}\Phi_{M_\cA+L}(E_\cA^{-1}z)\widehat{\delta_\cA}^{-1}\pi(E_\cA^{-1}z)^{-1}.
	\end{equation}
	By (\ref{inverseTFs}), if $E_\cA^{-1}z=((E_\cA^{-1}z)_1,(E_\cA^{-1}z)_2)$,
	\[
		\pi(E_\cA^{-1}z)^{-1}=e^{-2\pi i(E_\cA^{-1}z)_1\cdot(E_\cA^{-1}z)_2}\pi(-E_\cA^{-1}z)=\Phi_{-L}(E_\cA^{-1}z)\pi(-E_\cA^{-1}z).
	\]
	Also, by (\ref{muAdef}), for all $z\in\rdd$ and $\tau\in\bR$,
	\[
		\widehat{\delta_\cA}^{-1}\rho(-E_\cA^{-1}z;\tau)\widehat{\delta_\cA}=\rho(-\delta_\cA^{-1}E_\cA^{-1}z;\tau).
	\]
	Using the definition of $\rho$, for $\tau=0$ this is equivalent to
	\begin{equation}\label{eq462}
		\widehat{\delta_\cA}^{-1}\pi(-E_\cA^{-1}z)=e^{i\pi (E_\cA^{-1}z)_1\cdot(E_\cA^{-1}z)_2}e^{-i\pi(\delta_\cA^{-1}E_\cA^{-1}z)_1\cdot(\delta_\cA^{-1}E_\cA^{-1}z)_2}\pi(-\delta_\cA^{-1}E_\cA^{-1}z)\widehat{\delta_\cA}^{-1},
	\end{equation}
	where $\delta_\cA^{-1}E_\cA^{-1}z=((\delta_\cA^{-1}E_\cA^{-1}z)_1,(\delta_\cA^{-1}E_\cA^{-1}z)_2)$. 
%	Plugging (\ref{eq462}) into (\ref{eq461}), we get:
%	\begin{align*}
%		&\pi_\cA(z)^{-1}=|\det(E_\cA)|^{1/2}\Phi_{-(M_\cA+L)}(E_\cA^{-1}z)e^{i\pi (E_\cA^{-1}z)_1\cdot(E_\cA^{-1}z)_2}\\
%		&\times\qquad e^{-i\pi(\delta_\cA^{-1}E_\cA^{-1}z)_1\cdot(\delta_\cA^{-1}E_\cA^{-1}z)_2}\pi(-\delta_\cA^{-1}E_\cA^{-1}z)\widehat{\delta_\cA}^{-1}.
%	\end{align*}
%	Let $P$ as defined in (\ref{defL}) and observe that 
%	\[
%		\Phi_{-M_\cA-L}(E_\cA^{-1}z)e^{i\pi (E_\cA^{-1}z)_1\cdot(E_\cA^{-1}z)_2}=\Phi_{-M_\cA-L}(E_\cA^{-1}z)\Phi_{P^T}(E_\cA^{-1}z)=\Phi_{-M_\cA}(E_\cA^{-1}z),
%	\]
%	we obtain:
%\begin{align*}
%		&\pi_\cA(z)^{-1}=|\det(E_\cA)|^{1/2}\Phi_{-M_\cA}(E_\cA^{-1}z)e^{-i\pi(\delta_\cA^{-1}E_\cA^{-1}z)_1\cdot(\delta_\cA^{-1}E_\cA^{-1}z)_2}\pi(-\delta_\cA^{-1}E_\cA^{-1}z)\widehat{\delta_\cA}^{-1}.
%	\end{align*}
	We compute explicitly the matrix $\delta_\cA^{-1}E_\cA^{-1}$. For, let us denote with
	\[
		G_\cA=\begin{pmatrix}
		A & B\\
		C & D
		\end{pmatrix}
	\]
	the block decomposition of the symplectic matrix $G_\cA$, so that
	\[
		G_\cA^{-1}=\begin{pmatrix}
		D^T & -B^T\\
		-C^T & A^T
		\end{pmatrix}, \quad \overline{G_\cA}= \begin{pmatrix}
			A & -B\\
			-C & D
		\end{pmatrix}\quad and \quad \overline{G_\cA^T}=\overline{G_\cA}^T=\begin{pmatrix}
			A^T & -C^T \\
			-B^T & D^T
		\end{pmatrix}.
	\]
	By definition, $\delta_\cA=\pi^{Mp}(\widehat{\delta_\cA})=\pi^{Mp}(\cF \widehat{\overline{G_\cA}})$, so that
	\[
		\delta_\cA=J\overline{G_\cA}.
	\]
	This, together with $\overline{G_\cA}J\overline{G_\cA}^T=J$ and $G_\cA=LE_\cA^{-1}\cE_\cA$, yields to:
	\begin{align*}
		\delta_\cA^{-1}E_\cA^{-1}&=(-\overline{G_\cA}^{-1}J)(LG_\cA\cE_\cA^{-1})=(-J\overline{G_\cA}^T)(LG_\cA\cE_\cA^{-1}),
	\end{align*}
	where the invertibility of $\cE_\cA$ is guaranteed by Lemma \ref{lemma44}. We use the block decompositions of the matrices at stake to get:
	\begin{align*}
		\delta_\cA^{-1}E_\cA^{-1}&=\begin{pmatrix}
		0_{d	\times d} & -I_{d\times d}\\
		I_{d\times d} & 0_{d\times d}
		\end{pmatrix}\begin{pmatrix}
			A^T & -C^T\\
			-B^T & D^T
		\end{pmatrix}\begin{pmatrix}
			0_{d\times d} & I_{d\times d}\\
			I_{d\times d} & 0_{d\times d}
		\end{pmatrix}G_\cA\cE_\cA^{-1}\\
		&=\begin{pmatrix}
			B^T & -D^T\\
			A^T & -C^T
		\end{pmatrix}\begin{pmatrix}
			0_{d\times d} & I_{d\times d}\\
			I_{d\times d} & 0_{d\times d}
		\end{pmatrix}G_\cA\cE_\cA^{-1}\\
		&=\begin{pmatrix}
			-D^T & B^T\\
			-C^T & A^T
		\end{pmatrix}G_\cA\cE_\cA^{-1}\\
		&=\begin{pmatrix}
			-I_{d\times d} & 0_{d\times d}\\
			0_{d\times d} & I_{d\times d}
		\end{pmatrix}G_\cA^{-1}G_\cA\cE_\cA^{-1}=-Q \cE_\cA^{-1}.
	\end{align*}
	This proves (\ref{esplTA}). A simple computation shows that 
	\begin{equation}\label{deltaAEAcEA}
		(\delta_\cA^{-1}E_\cA^{-1}z)_1\cdot(\delta_\cA^{-1}E_\cA^{-1}z)_2=(Q\cE_\cA^{-1}z)_1\cdot(Q\cE_\cA^{-1}z)_2=-(\cE_\cA^{-1}z)_1\cdot(\cE_\cA^{-1}z)_2,
	\end{equation}
	that entails
	\[
		e^{-i\pi(\delta_\cA^{-1}E_\cA^{-1}z)_1\cdot(\delta_\cA^{-1}E_\cA^{-1}z)_2}=e^{i\pi (\cE_\cA^{-1}z)_1\cdot (\cE_\cA^{-1}z)_2}=\Phi_{L/2}(\cE_\cA^{-1}z).
	\]
	Plugging all the information in (\ref{eq461}), we find
	\[
		\pi_\cA(z)^{-1}=|\det(E_\cA)|^{1/2}\Phi_{M_\cA+L}(E_\cA^{-1}z)\Phi_{-L/2}(E_\cA^{-1}z)\Phi_{L/2}(\cE_\cA^{-1}z)\pi(Q\cE_\cA^{-1}z)\widehat{\delta_\cA}^{-1}.
	\]

	This proves $(i)$.

	To prove $(ii)$, we prove that $\pi_\cA(z)^\ast$ is expressed by (\ref{eq461}), up to the determinant factor. For, let $f,g\in L^2(\rd)$ and $z\in\rdd$. By (\ref{piASI}),
	\begin{align*}
		\la\pi_\cA(z)^\ast f,g\ra &=\la f,\pi_\cA(z)g\ra\\
		&=\la f,|\det(E_\cA)|^{-1/2}\Phi_{-M_\cA-L}(E_\cA^{-1}z)\pi(E_\cA^{-1}z)\widehat{\delta_\cA}g\ra\\
		&=\la |\det(E_\cA)|^{-1/2}\Phi_{M_\cA+L}(E_\cA^{-1}z)\widehat{\delta_\cA}^{-1}\pi(E_\cA^{-1}z)^{-1}f,g\ra\\
		&=\la |\det(E_\cA)|^{-1}\pi_\cA(z)^{-1}f,g\ra
	\end{align*}
	and the assertion follows.
\end{proof}

\section{Atoms of Covariant Metaplectic Wigner distributions}\label{sec:CA}
In this section we derive the expression of metaplectic atoms of covariant metaplectic Wigner distributions. We recall their definition, cf. \cite{CR2022}
\begin{definition}
	A metaplectic Wigner distribution $W_\cA$ is \textbf{covariant} if
	\[
		W_\cA(\pi(z)f,\pi(z)g)=T_zW_\cA(f,g)
	\]
	holds for every $z\in\rdd$ and all $f,g\in L^2(\rd)$.
\end{definition}
The following result summarizes \cite[Proposition 2.10 and Theorem 2.11]{CR2022} and states that covariance characterises the Cohen's class of metaplectic Wigner distributions. 

\begin{proposition}\label{propCarCov}
	Let $\hat\cA\in Mp(2d,\bR)$ and $W_\cA$ be the associated metaplectic Wigner distribution. The following statements are equivalent:\\
	(i) $W_\cA$ is covariant.\\
	(ii) The matrix $\cA=\pi^{Mp}(\hat\cA)$ has block decomposition 
	\begin{equation}\label{defAcov}
		\cA=\begin{pmatrix}
			A_{11} & I_{d\times d}-A_{11} & A_{13} & A_{13}\\
			A_{21} & -A_{21} & I_{d\times d} -A_{11}^T & -A_{11}^T\\
			0_{d\times d} & 0_{d\times d} & I_{d\times d} & I_{d\times d} \\
			-I_{d\times d}  & I_{d\times d}  & 0_{d\times d}  & 0_{d\times d} 
		\end{pmatrix},
	\end{equation}
	with $A_{13}=A_{13}^T$ and $A_{21}=A_{21}^T$.\\
	(iii) $W_\cA$ belongs to the Cohen's class, namely 
	\begin{equation}
		W_\cA(f,g)=\Sigma_\cA\ast W(f,g), \qquad f,g\in L^2(\rd),
	\end{equation}
	where $\Sigma_\cA=\cF^{-1}\Phi_{-B_\cA}$, with $B_\cA$ defined as in (\ref{defBA}). 
\end{proposition}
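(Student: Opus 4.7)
The plan is to establish (i)$\Leftrightarrow$(ii) directly, then close the cycle via (ii)$\Rightarrow$(iii) and the easy implication (iii)$\Rightarrow$(i).

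For (i)$\Rightarrow$(ii), I will use the tensor-product identity
\[
\pi(z)f\otimes\overline{\pi(z)g}=\pi(x,x,\xi,-\xi)(f\otimes\bar g)
\]
for $z = (x,\xi)$, and observe that the Heisenberg phase $e^{i\pi(x\cdot\xi - x\cdot\xi)}$ vanishes, so the right-hand side equals $\rho(x,x,\xi,-\xi;0)(f\otimes\bar g)$. Via the intertwining \eqref{muAdef}, the covariance condition $W_\cA(\pi(z)f,\pi(z)g) = T_z W_\cA(f,g)$ becomes $\rho(\cA(x,x,\xi,-\xi);0) = \rho((x,\xi,0,0);0)$ on the range of $W_\cA$; density of this range together with the faithfulness of $\rho(\cdot;0)$ forces
\[
\cA(x,x,\xi,-\xi)^T = (x,\xi,0,0)^T, \qquad (x,\xi)\in\rdd.
\]
Reading off the $d\times d$ block equations yields $A_{12}=I-A_{11}$, $A_{14}=A_{13}$, $A_{22}=-A_{21}$, $A_{24}=A_{23}-I$, $A_{32}=-A_{31}$, $A_{34}=A_{33}$, $A_{42}=-A_{41}$, $A_{44}=A_{43}$. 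To pin down the remaining entries I then substitute these relations into the three $2d\times 2d$-block symplectic identities ($\alpha^T\gamma$ symmetric, $\beta^T\delta$ symmetric, and $\alpha^T\delta-\gamma^T\beta=I$, equivalent to $(R1a)$--$(R3d)$): the symmetry of $\beta^T\delta$ extracts $A_{43}=0$ and $A_{13}=A_{13}^T$; that of $\alpha^T\gamma$ extracts $A_{31}=0$ and $A_{21}=A_{21}^T$; and $\alpha^T\delta-\gamma^T\beta=I$ then produces $A_{41}=-I$, $A_{33}=I$, and $A_{23}=I-A_{11}^T$, whence $A_{24}=-A_{11}^T$. This is exactly the form \eqref{defAcov}. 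The converse (ii)$\Rightarrow$(i) reduces to verifying $\cA(x,x,\xi,-\xi)^T = (x,\xi,0,0)^T$ directly from \eqref{defAcov} (note the row-2 cancellation $(I-A_{11}^T)\xi + A_{11}^T\xi = \xi$) and running the intertwining argument in reverse.

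For (ii)$\Rightarrow$(iii), I will factor $\cA = \cB \cdot A_{1/2}$ where $A_{1/2}$ is the Wigner matrix \eqref{Atau}: rows 3--4 of \eqref{defAcov} and of $A_{1/2}$ agree verbatim, making this factorization natural with $\cB$ preserving those rows. Passing to metaplectic operators and conjugating by the Fourier transform on $\rdd$, one checks that $\cB$ corresponds to multiplication by a chirp $\Phi_{-B_\cA}$ on the frequency side; the convolution theorem then gives $W_\cA(f,g) = \Sigma_\cA \ast W(f,g)$ with $\Sigma_\cA = \cF^{-1}\Phi_{-B_\cA}$. Matching the entries of the resulting shear with the precise matrix $B_\cA$ of \eqref{defBA} is the technical heart of this step.

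Finally, (iii)$\Rightarrow$(i) is immediate: $W$ is covariant (apply (ii)$\Rightarrow$(i) to $A_{1/2}$), and because convolution commutes with translations, one has $W_\cA(\pi(z)f,\pi(z)g) = \Sigma_\cA \ast W(\pi(z)f,\pi(z)g) = \Sigma_\cA \ast T_z W(f,g) = T_z (\Sigma_\cA \ast W(f,g)) = T_z W_\cA(f,g)$. The main obstacle throughout is the algebraic bookkeeping in (i)$\Rightarrow$(ii): combining the eight linear covariance relations with the three symplectic identities to extract the rigid constants in rows 3--4 and the symmetries of $A_{13}$ and $A_{21}$.
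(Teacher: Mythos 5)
The paper does not prove Proposition \ref{propCarCov}: it is quoted verbatim from \cite[Proposition 2.10 and Theorem 2.11]{CR2022}, so there is no in-text argument to compare against. Judged on its own, your proof is correct and essentially self-contained. The reduction of covariance to the linear identity $\cA(x,x,\xi,-\xi)=(x,\xi,0,0)$ is sound (the tensor identity $\pi(z)f\otimes\overline{\pi(z)g}=\pi(x,x,\xi,-\xi)(f\otimes\bar g)$ is right, both Heisenberg phases are trivial, and injectivity of $w\mapsto\rho(w;0)$ plus density of $\mathrm{span}\{f\otimes\bar g\}$ — or simply cancelling the invertible $\hat\cA$ — forces the vector equality). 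The eight block relations you read off are correct, and I verified that the three symplectic identities then yield $A_{31}=0$, $A_{43}=0$, $A_{41}=-I$, $A_{33}=I$, $A_{23}=I-A_{11}^T$ and the symmetry of $A_{13},A_{21}$, exactly reproducing \eqref{defAcov}. The factorization $\cA=\cB A_{1/2}$ also checks out: computing $\cA A_{1/2}^{-1}$ gives precisely $\begin{pmatrix} I_{2d} & B_\cA\\ 0_{2d\times 2d} & I_{2d}\end{pmatrix}=V_{B_\cA}^T$ with $B_\cA$ as in \eqref{defBA}, so Example \ref{es22}(iv) and the convolution theorem (together with evenness of the chirp, which lets you trade $\cF$ for $\cF^{-1}$) give (iii), up to the usual metaplectic sign.

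One small presentational caveat: the symmetry of $\beta^T\delta$ alone gives $A_{43}=0$ and the symmetry of $A_{13}^TA_{33}$, not of $A_{13}$ itself; you only get $A_{13}=A_{13}^T$ after extracting $A_{33}=I$ from $\alpha^T\delta-\gamma^T\beta=I$ (and likewise $A_{21}=A_{21}^T$ needs $A_{41}=-I$). The conclusions are all correct, but the order in which the identities are consumed should be stated as: first the off-diagonal cancellations giving $A_{31}=A_{43}=0$, then the third identity pinning down rows 3--4 and $A_{23}$, and only then the two symmetries.
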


\begin{theorem}
	Let $W_\cA$ be a covariant metaplectic Wigner distribution, $\cA$ and $B_\cA$ be as in (\ref{defAcov}) and (\ref{defBA}), respectively. Then, \\
	(i) for every $z\in\rdd$,
	\begin{equation}\label{expthmcov1}
		\pi_\cA(z)g\overset{\cS'}{=}2^d\int_{\rdd}\cF\Phi_{B_\cA}(z-w) \Phi_{-2L}(w) \pi(2w)\mathcal{I}g  dw,
	\end{equation}
	where $\mathcal{I}g(t)=g(-t)$ and the integral must be interpreted in the weak sense of vector-valued integration. \\
	(ii) If also $B_\cA\in GL(2d,\bR)$, then, for every $z\in\rdd$,
	\begin{equation}\label{expthmcov2}
		\pi_\cA(z)g\overset{\cS'}{=}2^d\int_{\rdd}\Phi_{-B_\cA^{-1}}(z-w) \Phi_{-2L}(w) \pi(2w)\mathcal{I}g  dw, \qquad g\in \cS(\rd)
	\end{equation}	
	holds in the weak sense of vector-valued integration.\\
	(iii) If $\cA_\ast$ is the matrix defined in (\ref{matrixAstar}), then $W_{\cA_\ast}$ is covariant with $B_{\cA_\ast}=-B_\cA$ and, consequently,
	\[
		\pi_{\cA}(z)^\ast g \overset{\cS'}{=}2^d\int_{\rdd}\cF\Phi_{-B_\cA}(z-w) \Phi_{-2L}(w) \pi(2w)\mathcal{I}g  dw, 
	\]
	for all $g\in\cS(\rd)$ and every $z\in\rdd$. If $B_\cA$ is invertible, then
	\[
		\pi_\cA(z)^\ast g\overset{\cS'}{=}2^d\int_{\rdd}\Phi_{B_\cA^{-1}}(z-w) \Phi_{-2L}(w) \pi(2w)\mathcal{I}g  dw,
	\]
	for every $g\in \cS(\rd)$ and $z\in\rdd$.
%	(ii) Moreover, if $B_\cA\in GL(2d,\bR)$, then $W_\cA(f,g)(z)$ is well-defined for every $z\in\rdd$ and
%	\begin{equation}\label{expthmcov2}
%		\pi_\cA(z)g=2^d\int_{\rdd}\Phi_{-B_\cA^{-1}}(z-w) \Phi_{-2L}(w) \pi(2w)\mathcal{I}g  dw, \qquad g\in L^2(\rd).
%	\end{equation}
%	In particular, for every $z\in\rdd$, $\pi_\cA(z)$ is a well-defined bounded operator on $L^2(\rd)$.
\end{theorem}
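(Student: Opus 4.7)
The plan is to drive all three parts from one engine: the convolution representation $W_\cA = \Sigma_\cA \ast W$ of Proposition \ref{propCarCov}, combined with the atom-defining relation $\langle \varphi,\pi_\cA(z)g\rangle = W_\cA(\varphi,g)(z)$ and the explicit Wigner atom obtained from Example \ref{example34} at $\tau=1/2$.

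For (i), I would first compute $W(\varphi,g)$ as a pairing. At $\tau=1/2$, Example \ref{example34} gives $\mathfrak{T}_{1/2}=\mathcal{I}$ and $|\det(E_{A_{1/2}})|^{-1/2}=2^d$; moreover the chirp $e^{-2\pi i x\cdot\xi/\tau}$ is precisely $\Phi_{-2L}(w)$ because $w\cdot Lw = 2w_1\cdot w_2$. Thus $\pi_{A_{1/2}}(w)g = 2^d\Phi_{-2L}(w)\pi(2w)\mathcal{I}g$, and pulling the scalar out of the conjugate-linear slot yields $W(\varphi,g)(w) = 2^d\Phi_{2L}(w)\langle\varphi,\pi(2w)\mathcal{I}g\rangle$. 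Substituting into Cohen's convolution produces
\begin{equation*}
\langle\varphi,\pi_\cA(z)g\rangle = 2^d\int_{\rdd}\Sigma_\cA(z-w)\Phi_{2L}(w)\langle\varphi,\pi(2w)\mathcal{I}g\rangle\, dw.
\end{equation*}
The closing identity is $\overline{\cF^{-1}\Phi_{-B_\cA}}=\cF\Phi_{B_\cA}$: it follows from $\overline{\Phi_{-B_\cA}}=\Phi_{B_\cA}$ (real symmetry of $B_\cA$) combined with $\overline{\cF^{-1}f}=\cF\bar f$. Conjugating scalars through the pairing and reinterpreting the resulting expression as a weak vector-valued integral delivers \eqref{expthmcov1}.

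Part (ii) is immediate from (i): when $B_\cA$ is invertible, \eqref{ft-chirp} rewrites $\cF\Phi_{B_\cA}$ as a constant multiple of $\Phi_{-B_\cA^{-1}}$, yielding \eqref{expthmcov2}. For (iii), the plan is first to confirm from the appendix's explicit expression \eqref{matrixAstar} that $\cA_\ast$ still has the covariant block pattern \eqref{defAcov}, so that Proposition \ref{propCarCov} makes $W_{\cA_\ast}$ covariant; and then to read off from \eqref{defBA} the sign flip $B_{\cA_\ast}=-B_\cA$. Once these are in hand, the adjoint formula \eqref{adjointA}, namely $\pi_\cA(z)^\ast = \pi_{\cA_\ast}(z)$, together with (i)--(ii) applied to $\cA_\ast$ produce the two stated integral expressions for $\pi_\cA(z)^\ast$ (observe that $\cF\Phi_{B_{\cA_\ast}} = \cF\Phi_{-B_\cA}$ in the general case and $\Phi_{-B_{\cA_\ast}^{-1}}=\Phi_{B_\cA^{-1}}$ when $B_\cA$ is invertible).

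The main obstacle is the block bookkeeping in (iii): extracting the explicit blocks of $\cA_\ast$, verifying that the covariant pattern \eqref{defAcov} is preserved, and matching entries with \eqref{defBA} to obtain the sign flip. This is elementary but is the only nontrivial symbolic step; the rest of the proof is direct substitution. A minor technical point is the justification of the vector-valued integral in $\cS'(\rd)$, which follows from the at most polynomial bound on the chirp kernel $\cF\Phi_{B_\cA}$, the Schwartz regularity of $w\mapsto\langle\varphi,\pi(2w)\mathcal{I}g\rangle$ for $\varphi,g\in\cS(\rd)$, and the continuity of metaplectic operators on $\cS(\rd)$.
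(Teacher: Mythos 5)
Your proposal is correct and follows essentially the same route as the paper: both rest on the Cohen-class convolution $W_\cA=\Sigma_\cA\ast W$ from Proposition \ref{propCarCov}, the explicit $\tau=1/2$ atom $\pi_{A_{1/2}}(w)g=2^d\Phi_{-2L}(w)\pi(2w)\mathcal{I}g$ from Example \ref{example34}, the identity $\Sigma_\cA=\overline{\cF\Phi_{B_\cA}}$, and for (iii) the block computation of $\cA_\ast$ giving $B_{\cA_\ast}=-B_\cA$ combined with $\pi_\cA(z)^\ast=\pi_{\cA_\ast}(z)$. The only difference is cosmetic (you substitute the explicit Wigner atom before forming the convolution, the paper after), and your remark that \eqref{ft-chirp} produces a constant multiple in (ii) is if anything more careful than the paper's own step.
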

\begin{proof}
	$(i)$ By Proposition \ref{propCarCov}, for all $\varphi,g\in \cS(\rd)$ and all $z\in\rdd$, 
	\begin{align*}
		\la \varphi,\pi_\cA(z)g\ra &={W_\cA(\varphi,g)(z)}\\
		&={\Sigma_\cA\ast W(\varphi,g)(z)}\\
		&=\int_{\rdd}{\Sigma_{\cA}(z-w)}  {W(\varphi,g)(w)}dw\\
		&=\int_{\rdd}\overline{\cF\Phi_{B_\cA}(z-w)}\la\varphi, \pi_{A_{1/2}}(w)g\ra dw\\
		&=\left\la \varphi, \int_{\rdd}\cF\Phi_{B_\cA}(z-w)\pi_{A_{1/2}}(w)gdw \right\ra,
	\end{align*}
	where we used that $\cF^{-1}\Phi_{-B_{\cA}}=\overline{\cF\Phi_{B_\cA}}$. Consequently,
	\[
		\pi_\cA(z)g=\int_{\rdd}\cF\Phi_{B_\cA}(z-w)\pi_{A_{1/2}}(w)gdw.
	\]
	Plugging $\tau=1/2$ in (\ref{defPiWt}), we infer the explicit metaplectic atom  of  the Wigner distribution: for $w=(x,\xi)\in\rdd$,
	\[
		\pi_{A_{1/2}}(x,\xi)g(t)=2^de^{-4\pi ix\cdot \xi}\pi(2x,2\xi)\mathcal{I}g(t)=2^d\Phi_{-2L}(w)\pi(2w)\mathcal{I}g(t).
	\]
	Expression (\ref{expthmcov1}) follows consequently. \\
	
	$(ii)$ If $B_\cA$ is invertible, then $\cF\Phi_{B_\cA}=\Phi_{-B_\cA^{-1}}$, and (\ref{expthmcov2}) holds in the weak sense of vector-valued integration. 
	
	$(iii)$ By (\ref{matrixAstar}) and (\ref{defAcov}), it follows that
	\[
		\cA_\ast=\begin{pmatrix}
			I_{d\times d}-A_{11} & A_{11} & -A_{13} & -A_{13}\\
			-A_{21} & A_{21} & A_{11}^T & A_{11}^T-I_{d\times d}\\
			0_{d\times d} & 0_{d\times d} & I_{d\times d} & I_{d\times d}\\
			-I_{d\times d} & I_{d\times d} & 0_{d\times d} & 0_{d\times d}
		\end{pmatrix}.
	\]	
	Therefore, $W_{\cA_\ast}$ is covariant by Proposition \ref{propCarCov} $(ii)$, with
	\begin{align*}
		B_{\cA_\ast}&=\begin{pmatrix}
			-A_{13} & \frac{1}{2}I_{d\times d} - (I_{d\times d}-A_{11})\\
			\frac{1}{2}I_{d\times d}-(I_{d\times d}-A_{11})^T & A_{21}
		\end{pmatrix}\\
		&=\begin{pmatrix}
			-A_{13} & A_{11}-\frac{1}{2}I_{d\times d}\\
			A_{11}^T-\frac{1}{2}I_{d\times d} & A_{21}
		\end{pmatrix}\\
		&=-B_\cA.
	\end{align*}
	So, $(iii)$ follows by $(i)$ and $(ii)$.
\end{proof}
%
%\begin{corollary}
%	% Compute the adjoint of metaplectic atoms for covariance.
%\end{corollary}
%
%The following example shows that shift-invertibility is sufficient, but not necessary for metaplectic atoms to be bounded on $L^2(\rd)$. 
%
%\begin{example}
%	Consider the metaplectic Wigner distribution
%	\[
%		W_\cA(f,g)=\Phi_{-B_\cA^{-1}}\ast W(f,g).
%	\]
%	With, $\cA=\pi^{Mp}(\hat\cA)$ satisfying (\ref{defAcov}) with $A_{11}=A_{13}=A_{21}=0_{d\times d}$. Then,
%	\[
%		B_\cA=\begin{pmatrix}
%			0_{d\times d} & \frac{1}{2}I_{d\times d}\\
%			\frac{1}{2}I_{d\times d} & 0_{d\times d}
%		\end{pmatrix},
%	\]
%	which is invertible. So, $\pi_\cA(z)$ is bounded on $L^2(\rd)$ for every $z\in\rdd$. However,
%	\[	
%	E_\cA=\begin{pmatrix}
%		0_{d\times d} & 0_{d\times d}\\
%		0_{d\times d} & I_{d\times d}
%	\end{pmatrix}
%	\]
%	is not invertible. 
%\end{example}

\section{Metaplectic Gabor frames}\label{sec:MGF}
%All the results of Section \ref{sec:TFatoms} were formulated framework  {of tempered distributions}. This allowed to obtain general formulae for time-frequency atoms that can be used to prove the boundedness on $L^2$ of these operators. Time-frequency shifts bear this continuity property, they are indeed isometries of $L^2$. 
%
%We are interested in replacing the STFT in the definition of Gabor frames. Loosely speaking, one may wonder how far the property of having metaplectic atoms that are isometries of $L^2$ can be weakened and which properties they must satisfy to define \textit{metaplectic Gabor frames}, in the sense of the following definition:

\begin{definition}\label{defMGF}
	Let $W_\cA$ be a metaplectic Wigner distribution such that every $\pi_\cA(z)$ extends to a bounded operator on $L^2(\rd)$ ($z\in\rdd$). Let $g\in L^2(\rd)$ and $\Lambda\subset\rdd$ be a discrete subset. We call the set $$\cG_\cA(g,\Lambda)=\{\pi_\cA(\lambda)g\}_{\lambda\in \Lambda}$$ a \textbf{metaplectic Gabor system}. We call \textbf{metaplectic Gabor frame} (of $L^2(\rd)$) any metaplectic Gabor system $\cG_\cA(g,\Lambda)$ such that the following property holds: there exist $A,B>0$ such that
	\begin{equation}\label{ineqMGF}
		A\norm{f}_2^2\leq\sum_{\lambda\in\Lambda}|W_\cA(f,g)(\lambda)|^2\leq B\norm{f}_2^2,
	\end{equation}
	 for all $ f\in L^2(\rd)$.
\end{definition}

\begin{remark}
	By Definition \ref{def1}, (\ref{ineqMGF}) is equivalent to
	\[
		A\norm{f}_2^2\leq\sum_{\lambda\in\Lambda}|\langle f,\pi_\cA(\lambda)g \rangle|^2\leq B\norm{f}_2^2,\quad\forall f\in\lrd.
	\]
	Stated differently, a metaplectic Gabor frame is a frame for $L^2(\rd)$.
\end{remark}

\begin{example}\label{exDG2}
	In \cite{DGosson}, M. de Gosson introduced $\hbar$-Gabor frames as follows. Consider $g\in L^2(\rd)$ and $\Lambda$ a discrete subset of $\rdd$. Under the same notation of Example \ref{exDG}, a family $\cG_\hbar(g,\Lambda)=\{\pi^\hbar(\lambda)g\}_{\lambda\in\Lambda}$ is a \textbf{$\hbar$-Gabor frame} if 
	\[
		A\norm{f}_2^2\leq\sum_{\lambda\in\Lambda}|\la f,\pi^\hbar (\lambda)g\ra|^2\leq B\norm{f}_2^2, \qquad \forall f\in L^2(\rd),
	\]
	for $A,B>0$. The time-frequency representation $z\mapsto \la f,\pi^\hbar (z)g\ra$ is, up to the constant $(2\pi \hbar)^{-d/2}$ (which is necessary to obtain a metaplectic operator in Example \ref{exDG}), the metaplectic Wigner distribution $V^\hbar$, as defined in Example \ref{exDG}. Hence, metaplectic Gabor frames $\cG_{\cA_{\hbar}}$ and $\hbar$-Gabor frames are basically the same objects. Namely, $\cG_\hbar(g,\Lambda)$ is a $\hbar$-Gabor frame with frame bounds $A,B$ if and only if $\cG_{\cA_\hbar}(g,\Lambda)$ is a metaplectic Gabor frame with frame bounds $(2\pi\hbar)^{-d}A$ and $(2\pi\hbar)^{-d}B$.
%$\cG_{\cA_{\hbar}}(g,\Lambda)$ is a Gabor frame with frame bounds $A,B$ if and only if $\cG_\hbar(g,\Lambda)$ is a $\hbar$-Gabor frame with frame bounds $(2\pi \hbar)^{d}A,(2\pi \hbar)^{d}B$. 
\end{example}

Metaplectic Gabor frames associated to shift-invertible Wigner distributions are completely characterized by the following consequence of Corollary \ref{cor43}.

\begin{theorem}\label{thmFrames}
	Let $W_\cA$ be shift-invertible and $\widehat{\delta_\cA}$ be the corresponding deformation operator (see Definition \ref{defop}). Let $g\in L^2(\rd)$ and $\Lambda\subseteq\rdd$ be a discrete subset. The following statements are equivalent:\\
	(i) $\cG_\cA(g,\Lambda)$ is a metaplectic Gabor frame with bounds $A,B$;\\
	(ii) $\cG(\widehat{\delta_\cA}g,E_\cA^{-1}\Lambda)$ is a Gabor frame with bounds $|\det(E_\cA)|A,|\det(E_\cA)|B$;\\
	(iii) $\cG(g,-Q\cE_\cA^{-1}\Lambda)$ is a Gabor frame with bounds $|\det(E_\cA)|A,|\det(E_\cA)|B$.
\end{theorem}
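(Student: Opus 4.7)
The strategy is to reduce the metaplectic frame inequalities
\begin{equation*}
A\|f\|_2^2 \le \sum_{\lambda\in\Lambda}|W_\cA(f,g)(\lambda)|^2 \le B\|f\|_2^2
\end{equation*}
to ordinary Gabor frame inequalities by exploiting the explicit factorizations already established: Corollary \ref{cor43} handles the passage $(i)\Leftrightarrow(ii)$, and Theorem \ref{thm46} handles $(i)\Leftrightarrow(iii)$. In both cases the ``extra'' factors that appear are either chirps $\Phi_C$ with $C$ real symmetric (hence pointwise unimodular and invisible under $|\cdot|^2$), scalar determinant powers (which rescale the frame bounds by $|\det(E_\cA)|$), or the unitary metaplectic operator $\widehat{\delta_\cA}$ on $L^2(\rd)$ (an isometric bijection).

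For $(i)\Leftrightarrow(ii)$, I would plug $z=\lambda$ into the identity of Corollary \ref{cor43},
\begin{equation*}
W_\cA(f,g)(\lambda) = |\det(E_\cA)|^{-1/2}\,\Phi_{M_\cA+L}(E_\cA^{-1}\lambda)\,V_{\widehat{\delta_\cA}g}f(E_\cA^{-1}\lambda).
\end{equation*}
Since $M_\cA + L$ is real symmetric (by Lemma \ref{commpiWA} and the form of $L$ in \eqref{defL}), the chirp is unimodular, so
\begin{equation*}
|W_\cA(f,g)(\lambda)|^2 = |\det(E_\cA)|^{-1}\,|V_{\widehat{\delta_\cA}g}f(E_\cA^{-1}\lambda)|^2.
\end{equation*}
Summing over $\Lambda$ and reindexing $\mu = E_\cA^{-1}\lambda$, which is a bijection $\Lambda \to E_\cA^{-1}\Lambda$ as $E_\cA$ is invertible, one sees at once that \eqref{ineqMGF} with constants $A,B$ is equivalent to the Gabor frame inequality for $\cG(\widehat{\delta_\cA}g, E_\cA^{-1}\Lambda)$ with constants $|\det(E_\cA)|A$ and $|\det(E_\cA)|B$.

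For $(i)\Leftrightarrow(iii)$, I would begin from $W_\cA(f,g)(\lambda) = \la f,\pi_\cA(\lambda)g\ra = \la \pi_\cA(\lambda)^\ast f, g\ra$ and substitute the explicit expression for $\pi_\cA(\lambda)^\ast$ coming from \eqref{pi-inverso}--\eqref{pi-aggiunto} in Theorem \ref{thm46}. Absorbing the chirps $\Phi_{M_\cA+L/2}(E_\cA^{-1}\lambda)$ and $\Phi_{L/2}(\cE_\cA^{-1}\lambda)$ (unimodular by the same symmetry argument) together with the standard identity $\pi(\mu)^\ast = e^{-2\pi i\mu_1\cdot\mu_2}\pi(-\mu)$, one obtains
\begin{equation*}
|W_\cA(f,g)(\lambda)|^2 = |\det(E_\cA)|^{-1}\,|\la\widehat{\delta_\cA}^{-1}f,\pi(-Q\cE_\cA^{-1}\lambda)g\ra|^2.
\end{equation*}
Because $h := \widehat{\delta_\cA}^{-1}f$ traces out all of $L^2(\rd)$ isometrically as $f$ does, summing over $\lambda\in\Lambda$ and reindexing $\nu = -Q\cE_\cA^{-1}\lambda$ turns the metaplectic bounds $A,B$ into the Gabor bounds $|\det(E_\cA)|A$, $|\det(E_\cA)|B$ for $\cG(g,-Q\cE_\cA^{-1}\Lambda)$, giving the desired equivalence.

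I do not anticipate a genuine obstacle: the substance of the theorem lies entirely in Corollary \ref{cor43} and Theorem \ref{thm46}, and the remaining work is bookkeeping of the unimodular chirps and scalar determinant factors. The only subtle point is to verify at each step that every matrix entering a chirp is real symmetric so that the chirp disappears under $|\cdot|^2$; this is immediate from the explicit forms of $L$, $M_\cA$ and the symmetry identities already recorded in Section \ref{sec:preliminaries} and the proof of Lemma \ref{commpiWA}.
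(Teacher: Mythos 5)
Your proposal is correct and follows essentially the same route as the paper: both reduce (i)$\Leftrightarrow$(ii) to the identity $|W_\cA(f,g)(\lambda)|=|\det(E_\cA)|^{-1/2}|\la f,\pi(E_\cA^{-1}\lambda)\widehat{\delta_\cA}g\ra|$ from Corollary \ref{cor43}/\eqref{piASI}, and both reduce (i)$\Leftrightarrow$(iii) to $|\det(E_\cA)|^{-1}|\la\widehat{\delta_\cA}^{-1}f,\pi(-Q\cE_\cA^{-1}\lambda)g\ra|^2$ using the identity $\delta_\cA^{-1}E_\cA^{-1}=-Q\cE_\cA^{-1}$ from Theorem \ref{thm46} together with the unitarity of $\widehat{\delta_\cA}$. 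The only cosmetic difference is that you reach the latter expression via the explicit adjoint formula \eqref{pi-aggiunto}, whereas the paper commutes $\pi(\mu)$ past $\widehat{\delta_\cA}$ with the intertwining relation \eqref{muAdef}; the substance is identical.
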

\begin{proof}
	Consider $f\in L^2(\rd)$. We use the representation of $\pi_{\cA}$ in (\ref{piASI}):
	\begin{align*}
		\sum_{\lambda\in\Lambda}|\la f,\pi_\cA(\lambda)g\ra|^2&=\sum_{\lambda\in\Lambda}|\la f,|\det(E_\cA)|^{-1/2}\pi(E_\cA^{-1}\lambda)\widehat{\delta_\cA}g\ra|^2\\
		&=|\det(E_\cA)|^{-1}\sum_{\mu\in E_\cA^{-1}\Lambda}|\la f,\pi(\mu)\widehat{\delta_\cA}g|^2.
	\end{align*}
	This proves the equivalence $(i)\Leftrightarrow(ii)$. Now, using (\ref{muAdef}), we can write
	\begin{align*}
		|\det(E_\cA)|^{-1}\sum_{\mu\in E_\cA^{-1}\Lambda}|\la f,\pi(\mu)\widehat{\delta_\cA}g|^2&=|\det(E_\cA)|^{-1}\sum_{\mu\in E_\cA^{-1}\Lambda}|\la f, \widehat{\delta_\cA}\pi(\delta_\cA^{-1}  \mu)g\ra|^2\\
		&=|\det(E_\cA)|^{-1}\sum_{\mu\in E_\cA^{-1}\Lambda}|\la \widehat{\delta_\cA}^{-1}f,\pi({\delta_\cA}^{-1}\mu)g\ra|^2\\
		&=|\det(E_\cA)|^{-1}\sum_{\nu\in {\delta_\cA}^{-1}E_\cA^{-1}\Lambda}|\la \widehat{\delta_\cA}^{-1}f,\pi(\nu)g\ra|^2.
	\end{align*}
	Observing that $\delta_\cA^{-1}E_\cA^{-1}=-Q\cE_\cA^{-1}$, 
	\begin{align*}
		|\det(E_\cA)|^{-1}\sum_{\mu\in E_\cA^{-1}\Lambda}|\la f,\pi(\mu)\widehat{\delta_\cA}g|^2&=|\det(E_\cA)|^{-1}\sum_{\nu\in -Q\cE_\cA^{-1}\Lambda}|\la \delta_\cA^{-1}f,\pi(\nu)g \ra|^2.
	\end{align*}
	Therefore, $\mathcal{G}_\cA(g,\Lambda)$ is a metaplectic Gabor frame with frame bounds $A$ and $B$ if and only if 
	\begin{equation}\label{equiv1}
		A\norm{f}_2^2\leq|\det(E_\cA)|^{-1}\sum_{\mu\in -Q\cE_\cA^{-1}\Lambda}|\la \delta_\cA^{-1}f,\pi(\mu)g \ra|^2\leq B\norm{f}_2^2, \qquad f\in L^2(\rd).
	\end{equation}
	Since $\widehat{\delta_\cA}^{-1}$ is a unitary operator on $L^2(\rd)$, it follows that (\ref{equiv1}) holds for all $f\in L^2(\rd)$ if and only if
	\[
		|\det(E_\cA)|A\norm{f}_2^2\leq\sum_{\mu\in -Q\cE_\cA^{-1}\Lambda}|\la f,\pi(\mu)g \ra|^2\leq|\det(E_\cA)| B\norm{f}_2^2
	\]
	holds for every $f\in L^2(\rd)$. This proves the equivalence $(i)\Leftrightarrow(iii)$.
\end{proof}
\begin{remark}
	For $\hbar$-Gabor frames, Example \ref{exDG2} shows that Theorem \ref{thmFrames} applied to the metaplectic Wigner distributions of Example \ref{exDG} recovers \cite[Proposition 7]{DGosson}.%, which is stated for $\hbar$-Gabor frames. %Under the notation of Example \ref{exDG}, for given $g\in L^2(\rd)$ and $\Lambda\subset\rdd$ discrete, $\cG(g,\Lambda)$ is a Gabor frame with frame bounds $A,B$ if and only if $\cG_{\cA_\hbar}((2\pi\hbar)^{d/2}g,E_{\cA_{\hbar}}\Lambda)$ is a metaplectic Gabor frame with frame bounds $A,B$.  
\end{remark}

\vspace{0.5truecm}
We now introduce the metaplectic Gabor frame operator and related properties.

First, consider a lattice $\Lambda\subset \rdd$ and  a metaplectic Gabor frame $\cG_\cA(g,\Lambda)=\{\pi_\cA(\lambda)g\}_{\lambda\in \Lambda}$ for $\lrd$. We compute the expressions of  coefficient, reconstruction and frame operators, see, e.g.,  \cite[Definitions 3.1.8 and 3.1.13]{Elena-book}. The coefficient (or analysis) operator $C_{\cA}:\lrd\to \ell^2(\Lambda)$ is given by
\begin{equation}\label{coeffoper}
	C_{\cA} f =(\la f, \pi_\cA(\lambda)g \ra)_{\lambda\in\Lambda}=(W_\cA (f, g)(\lambda))_{\lambda\in\Lambda},\quad f\in\lrd.
\end{equation}
Its adjoint $D_\cA=C_{\cA}^*: \ell^2(\Lambda)\to \lrd$ is called the reconstruction (or synthesis) operator: for any sequence $c=(c_\lambda)_{\lambda\in\Lambda}\in\ell^2(\Lambda)$,
\begin{equation}\label{synthop}
	D_{\cA} c =\sum_{\lambda\in\Lambda}c_\lambda \pi_\cA(\lambda)g.
\end{equation}
The frame operator is defined as $S_\cA=D_\cA C_\cA:\lrd\to\lrd$:
\begin{equation}\label{frameop}
	S_\cA f=\sum_{\lambda\in\Lambda}\la f, \pi_\cA(\lambda)g \ra \pi_\cA(\lambda)g= \sum_{\lambda\in\Lambda}W_\cA (f, g)(\lambda)\pi_\cA(\lambda)g.
\end{equation}

Let us compute $\pi_\cA(\mu)^{-1} S_\cA \pi_\cA(\mu)$, for $\mu\in\Lambda$. We make use of the explicit expression of the inverse and the adjoint of the metaplectic atom \eqref{piASI} in \eqref{pi-inverso}, and \eqref{pi-aggiunto}, respectively.  Observing that the phase factors cancel, we obtain
\begin{align*}
	\pi_\cA(\mu)^{-1}S_\cA \pi_\cA(\mu)f&=\sum_{\lambda\in\Lambda} \la \pi_\cA(\mu)f,\pi_\cA(\lambda)g\ra\pi_\cA(\mu)^{-1}\pi_\cA(\lambda)g\\
	&=\sum_{\lambda\in\Lambda} \la f,\pi_\cA(\mu)^\ast\pi_\cA(\lambda)g\ra\pi_\cA(\mu)^{-1}\pi_\cA(\lambda)g\\
	&=|\det (E_\cA)|^{-1}\sum_{\lambda\in\Lambda} \la f,\pi_\cA(\mu)^{-1}\pi_\cA(\lambda)g\ra\pi_\cA(\mu)^{-1}\pi_\cA(\lambda)g\\
	&=|\det (E_\cA)|^{-1}\sum_{\lambda\in\Lambda} \la f,\widehat{\delta_\cA}^{-1}\pi(E_\cA^{-1}\mu)^{-1}\pi(E_\cA^{-1}\lambda)\widehat{\delta_\cA} g\ra\\
	&\qquad\qquad\times\widehat{\delta_\cA}^{-1}\pi(E_\cA^{-1}\mu)^{-1}\pi(E_\cA^{-1}\lambda)
	\widehat{\delta_\cA}g\\
	&=|\det (E_\cA)|^{-1}\sum_{\lambda\in\Lambda} \la f,\widehat{\delta_\cA}^{-1}\pi(E_\cA^{-1}(\lambda-\mu))\widehat{\delta_\cA} g\ra
	\widehat{\delta_\cA}^{-1}\pi(E_\cA^{-1}(\lambda-\mu))\widehat{\delta_\cA}g\\
	&=|\det (E_\cA)|^{-1}\sum_{\lambda\in\Lambda} \la f,\widehat{\delta_\cA}^{-1}\pi(E_\cA^{-1}\lambda)\widehat{\delta_\cA} g\ra
	\widehat{\delta_\cA}^{-1}\pi(E_\cA^{-1}\lambda)\widehat{\delta_\cA}g\\
	&=\sum_{\lambda\in\Lambda} \la f,\widehat{\delta_\cA}^{-1}\pi_\cA(\lambda) g\ra
	\widehat{\delta_\cA}^{-1}\pi_\cA(\lambda)g\\
	&=\sum_{\lambda\in\Lambda} \la  \widehat{\delta_\cA} f,\pi_\cA(\lambda) g\ra
	\widehat{\delta_\cA}^{-1}\pi_\cA(\lambda)g\\
	&=	\widehat{\delta_\cA}^{-1}S_\cA \widehat{\delta_\cA} f,
\end{align*}
since $\widehat{\delta_\cA}^{-\ast}= \widehat{\delta_\cA}.$

The equality $$	\pi_\cA(\mu)^{-1}S_\cA=\widehat{\delta_\cA}^{-1}S_\cA \widehat{\delta_\cA}\pi_\cA(\mu)^{-1}$$
yields
$$S_\cA^{-1} \pi_\cA(\mu)=\pi_\cA(\mu)\widehat{\delta_\cA}^{-1}S_\cA^{-1} \widehat{\delta_\cA}.$$
Hence the canonical dual frame of $\cG_\cA(g,\Lambda)$ is still a metaplectic Gabor frame 
\begin{equation}\label{canonic-dual}
	\cG_\cA(\gamma_\cA,\Lambda)=\{	\pi_\cA(\lambda)\gamma_\cA\}_{\lambda\in\Lambda}
\end{equation}
with canonical dual window
\begin{equation}\label{dualecanonica}
	\gamma_\cA= \widehat{\delta_\cA}^{-1}S_\cA^{-1} \widehat{\delta_\cA} g.
\end{equation}

Consequently, if $\cG_\cA(g,\Lambda)$ is a frame with bounds $0<A\leq B$, then every $f\in\lrd$ possesses the expansions
\begin{align}\label{C3Gaborexp}
	f&=\sum_{\lambda\in\Lambda} \la f,\pi_\cA(\lambda)g\ra \pi_\cA(\lambda)\gamma_\cA\\
	&=\sum_{\lambda\in\Lambda} \la f,\pi_\cA(\lambda)\gamma_\cA\ra \pi_\cA(\lambda)g
\end{align}
with unconditional convergence in $\lrd$. Besides, we have the norm equivalences
\begin{align*}
	A\|f\|_2^2\leq& \sum_{\lambda\in\Lambda} |\la f,\pi_\cA(\lambda)g\ra|^2\leq B\|f\|^2 \\
	B^{-1}\|f\|^2\leq & \sum_{\lambda\in\Lambda} |\la f,\pi_\cA(\lambda)\gamma_\cA\ra|^2\leq 	A^{-1}\|f\|_2^2.
\end{align*}

\section{Characterization of Time-frequency spaces }\label{sec:CMS}
A direct application of the theory developed so far is the whole characterization of modulation spaces. Namely, the issue below generalizes Theorem 1.1 in \cite{CGshiftinvertible} to the quasi-Banach space setting, extending the indices $p,q\in [1,\infty]$ to $0<p,q\leq\infty$.  Whenever $p\not=q$ we need  the assumption $E_\cA$ upper-triangular, that is, the $2\times1$ block of $E_\cA$ in \eqref{defEAFA} satisfies $A_{21}=0_{d\times d}$. This requirement is needed for the use of Proposition \ref{thmA12}.
\begin{theorem} \label{thmF}
 Fix a non-zero window function $g\in \cS(\rd)$. Consider $0<p,q\leq\infty$, $W_\cA$  shift-invertible and a weight  $m\in\mathcal{M}_v(\rdd)$ with $m\asymp m\circ E_\cA^{-1}$.
 Then\\
 (i) For $0<p\leq\infty$   and we have
	\begin{equation}\label{charmod}
		f\in M^{p}_m(\rd) \qquad \Leftrightarrow \qquad W_\cA(f,g)\in L^{p}_m(\rdd),
	\end{equation}
	with equivalence of norms.\\
(ii) If we add the assumption that $E_\cA$ is upper-triangular, then 
\begin{equation}\label{charmod2}
	f\in M^{p,q}_m(\rd) \qquad \Leftrightarrow \qquad W_\cA(f,g)\in L^{p,q}_m(\rdd),
\end{equation}
with equivalence of norms.
\end{theorem}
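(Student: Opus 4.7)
The plan is to reduce Theorem \ref{thmF} to the classical STFT-based definition of modulation spaces via Corollary \ref{cor43}, and then to transfer the estimate through the change-of-variables machinery of Proposition \ref{thmA12}. Setting $h := \widehat{\delta_\cA} g$, Proposition \ref{Folland427} gives $h \in \cS(\rd) \setminus \{0\}$, so $h$ is an admissible window for the modulation (quasi-)norm. By Corollary \ref{cor43} and because $|\Phi_{M_\cA+L}| \equiv 1$, one obtains, up to a sign,
\[
|W_\cA(f,g)(z)| \;=\; |\det(E_\cA)|^{-1/2}\,|V_h f(E_\cA^{-1} z)| \;=\; |\mathfrak{T}_{E_\cA^{-1}} V_h f(z)|, \qquad z \in \rdd,
\]
with $\mathfrak{T}_S$ as in Proposition \ref{thmA12}; hence $\|W_\cA(f,g)\|_{L^{p,q}_m} = \|\mathfrak{T}_{E_\cA^{-1}} V_h f\|_{L^{p,q}_m}$.

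The task is then to show that $\mathfrak{T}_{E_\cA^{-1}}$ is a topological isomorphism of $L^{p,q}_m(\rdd)$. For part (i), where $p = q$, a direct Jacobian computation gives, for any $F \in L^p_m(\rdd)$,
\[
\|\mathfrak{T}_{E_\cA^{-1}} F\|_{L^p_m}^p \;=\; \int_{\rdd} |F(u)|^p\, m(E_\cA u)^p\, du \;\asymp\; \|F\|_{L^p_m}^p,
\]
the equivalence following from $m \asymp m \circ E_\cA^{-1}$ (equivalently, $m \circ E_\cA \asymp m$); no triangular structure is required. For part (ii), $L^{p,q}_m$ is in general not invariant under arbitrary invertible linear substitutions, but upper-triangularity of $E_\cA$ propagates to $E_\cA^{-1}$, so Proposition \ref{thmA12} applies to both $\mathfrak{T}_{E_\cA^{-1}}$ and its inverse $\mathfrak{T}_{E_\cA}$ on $L^{p,q}_m$, thanks to the weight hypothesis. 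Combining these with the window-independence of the $M^{p,q}_m$-(quasi-)norm, valid also in the quasi-Banach regime $0 < p, q < 1$ via the references in Section \ref{subsec:MSs}, yields
\[
\|W_\cA(f,g)\|_{L^{p,q}_m} \;\asymp\; \|V_h f\|_{L^{p,q}_m} \;\asymp\; \|V_g f\|_{L^{p,q}_m} \;=\; \|f\|_{M^{p,q}_m},
\]
which establishes both (\ref{charmod}) and (\ref{charmod2}).

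The \emph{main obstacle} is concentrated in the mixed-norm case of part (ii): $L^{p,q}(\rdd)$ with $p \neq q$ fails to be invariant under generic linear coordinate changes, which is why the upper-triangularity assumption on $E_\cA$ cannot be dropped—it is precisely what makes the pullback $\mathfrak{T}_{E_\cA^{-1}}$ decompose compatibly with the iterated integration defining the $(p,q)$-norm. Part (i), by contrast, rests only on the standard single-integral change of variables combined with the moderateness compatibility $m \asymp m \circ E_\cA^{-1}$, so no additional structural assumption on $E_\cA$ is needed. A minor subtlety to verify carefully in Step 3 is the equivalence of modulation-space (quasi-)norms for different Schwartz windows in the range $0 < p, q < 1$, which is covered by \cite{Galperin2004,Kobayashi2006,PILIPOVIC2004194}.
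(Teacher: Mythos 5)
Your proposal is correct and follows essentially the same route as the paper: rewrite $|W_\cA(f,g)|$ as $|\mathfrak{T}_{E_\cA^{-1}}V_{\widehat{\delta_\cA}g}f|$ via Corollary \ref{cor43}, use that $\widehat{\delta_\cA}g\in\cS(\rd)$ is an admissible window (so the modulation norm is window-independent, also in the quasi-Banach range), and transfer the linear change of variables through Proposition \ref{thmA12}. Your explicit Jacobian computation for part (i) and the remark that upper-triangularity passes to $E_\cA^{-1}$ merely fill in details the paper leaves implicit.
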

\begin{proof}
 Take $f\in M^{p,q}_m(\rd)$. From the equality \eqref{WA-STFT} we infer 
	\begin{align*}
	|W_\cA(f,g)|(z)&=|\mathfrak{T}_{E_\cA^{-1}}\Phi_{M_\cA+L}V_{\widehat{\delta_\cA} g}f|(z)=|\mathfrak{T}_{E_\cA^{-1}}V_{\widehat{\delta_\cA} g}f|(z)\\
	&=|\det(E_{\cA})|^{-1/2}|V_{\widehat{\delta_\cA} g}f|(E_{\cA}^{-1}z).
\end{align*}
Since $\widehat{\delta_\cA}:\cS(\rd)\to \cS(\rd)$, we can choose the window ${\widehat{\delta_\cA} g}\in\cS(\rd)$ to compute the modulation space norm so that
$$\|	W_\cA(f,g)\|_{L^{p,q}_m}\asymp \|V_{\widehat{\delta_\cA} g}f(E_{\cA}^{-1}\cdot)\|_{L^{p,q}_m}.$$
The conclusion follows from Proposition \ref{thmA12}.
\end{proof}

In what follows we generalize {\cite[Corollary 3.12]{CGshiftinvertible}} to the quasi-Banach space setting $0<p,q\leq\infty$. 
\begin{theorem}\label{corWiener}
	Fix a non-zero window function $g\in \cS(\rd)$. Consider $0<p,q\leq\infty$, $W_\cA$  shift-invertible and  $m_1,m_2\in\cM_v(\rd)$ such that $m_2\asymp \cI m_2$, with $\cI m_2(x)=m_2(-x)$.
	Fix $g\in\cS(\rd)\setminus\{0\}$ and define
	\begin{equation}\label{tEA0}
		\tilde E_\cA=J E_\cA J,
	\end{equation}
with the symplectic matrix $J$ defined in \eqref{defJ}.
(Observe that $E_{\cA}^{-1}$ is lower triangular if and only if $\tilde E_\cA$ is upper triangular). If $m_1\otimes m_2\asymp (m_1\otimes m_2)\circ \tilde E_{\cA}^{-1}$ and $E_\cA$ is lower triangular, then
	\[
	\norm{f}_{W(\cF L^p_{m_1},L^q_{m_2})}\asymp\Big(\int_{\rd}\Big(\int_{\rd}|W_\cA(f,g)(x,\xi)|^pm_1(\xi)^pd\xi\Big)^{q/p}m_2(x)^qdx\Big)^{1/q},
	\]
	with the analogous for $\max\{p,q\}=\infty$.
\end{theorem}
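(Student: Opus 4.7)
The argument mirrors the proof of Theorem \ref{thmF}(ii), adapted to the Wiener-amalgam integration order (inner $L^p$ in $\xi$, outer $L^q$ in $x$).

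First, by Corollary \ref{cor43},
\[
|W_\cA(f,g)(x,\xi)|=|\det(E_\cA)|^{-1/2}\,|V_{\widehat{\delta_\cA}g}f(E_\cA^{-1}(x,\xi))|,
\]
and Proposition \ref{Folland427} gives $\widehat{\delta_\cA}g\in\cS(\rd)\setminus\{0\}$, so we may use it as a Schwartz window. Substituting into the right-hand side of the statement, the task (up to the factor $|\det E_\cA|^{-1/2}$) is to compare the Wiener-order weighted $L^{p,q}$ integral of $V_{\widehat{\delta_\cA}g}f\circ E_\cA^{-1}$ with the same quantity for $V_{\widehat{\delta_\cA}g}f$, the latter being equivalent (by the window-independent STFT characterization of Wiener amalgam spaces in the full quasi-Banach scale $0<p,q\leq\infty$, valid for $v$-moderate weights) to $\|f\|_{W(\cF L^p_{m_1},L^q_{m_2})}$.

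To reduce to a setting directly covered by Proposition \ref{thmA12}(ii), I would swap the two integration orders via the matrix $L$ of \eqref{defL}. The Wiener-order weighted integral is then recast as $\|V_{\widehat{\delta_\cA}g}f\circ E_\cA^{-1}L\|_{L^{p,q}_{m_1\otimes m_2}}$ and the target quantity as $\|V_{\widehat{\delta_\cA}g}f\circ L\|_{L^{p,q}_{m_1\otimes m_2}}$, so the theorem reduces to showing that composition with $LE_\cA^{-1}L$ is a topological isomorphism of $L^{p,q}_{m_1\otimes m_2}(\rdd)$. A direct block calculation using $E_\cA=\begin{pmatrix}A_{11}&0\\A_{21}&A_{23}\end{pmatrix}$ yields
\[
LE_\cA^{-1}L=\begin{pmatrix}A_{23}^{-1}&-A_{23}^{-1}A_{21}A_{11}^{-1}\\0&A_{11}^{-1}\end{pmatrix},
\]
which is upper triangular precisely because $E_\cA$ is lower triangular, as required by Proposition \ref{thmA12}(ii).

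It remains to transfer the weight invariance from the hypothesis to the matrix $LE_\cA^{-1}L$. Comparing block-by-block with the explicit form of $\tilde E_\cA^{-1}=JE_\cA^{-1}J$, one sees that $LE_\cA^{-1}L$ and $\tilde E_\cA^{-1}$ agree on the off-diagonal block and differ by a sign on the two diagonal blocks. Using the assumption $m_2\asymp\cI m_2$, together with the symmetry of $m_1$ forced by specializing the main weight hypothesis $m_1\otimes m_2\asymp(m_1\otimes m_2)\circ\tilde E_\cA^{-1}$ to the coordinate slice $v=0$ (which gives $m_1\asymp m_1\circ(-A_{23}^{-1})$, combined with the $v$-moderation and evenness of $v$), these sign flips are absorbed, and one obtains $m_1\otimes m_2\asymp(m_1\otimes m_2)\circ LE_\cA^{-1}L$. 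Proposition \ref{thmA12}(ii) then delivers the isomorphism, and collecting the Jacobian $|\det E_\cA|$ with the factor $|\det E_\cA|^{-1/2}$ from the first step yields the desired equivalence.

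The principal obstacle is the sign bookkeeping in the last step: the matrix that actually arises from the integration-order swap, $LE_\cA^{-1}L$, is not literally $\tilde E_\cA^{-1}$, and one must verify that the discrepancy is neutralized by $m_2\asymp\cI m_2$ and the implicit symmetry of $m_1$ extracted from the weight hypothesis. All other ingredients --- Corollary \ref{cor43}, the window-independence of Wiener amalgam norms in the quasi-Banach setting, and Proposition \ref{thmA12}(ii) --- are already in place.
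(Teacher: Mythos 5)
Your argument reaches the correct conclusion, but by a genuinely different route from the paper. The paper moves to the Fourier side: quoting an identity from \cite{CGshiftinvertible}, it rewrites the Wiener-order quantity as $\norm{W_{\tilde\cA_0}(\hat f,\hat g)}_{L^{p,q}_{m_1\otimes m_2}}$ for an auxiliary symplectic matrix $\tilde \cA_0$ satisfying $E_{\tilde\cA_0}=\tilde E_\cA$ (upper triangular), and then applies Theorem \ref{thmF}(ii) to $\hat f$ with window $\hat g$. You stay on the time side: Corollary \ref{cor43} together with the coordinate swap $L$ of \eqref{defL} reduces everything to the change of variables $LE_\cA^{-1}L$, which you correctly compute to be block upper triangular precisely when $E_\cA$ is lower triangular, and Proposition \ref{thmA12} finishes. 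Both proofs ultimately rest on Proposition \ref{thmA12}; yours avoids constructing $\tilde\cA_0$ at the cost of reconciling $LE_\cA^{-1}L$ with $\tilde E_\cA^{-1}$ by hand (indeed $LE_\cA^{-1}L=-Q\tilde E_\cA^{-1}Q$ with $Q$ as in Theorem \ref{thm46}, which is exactly the sign discrepancy you describe) and of invoking the window-independent STFT description of $W(\cF L^{p}_{m_1},L^q_{m_2})$ in the quasi-Banach range, which is legitimate here since $\widehat{\delta_\cA}g\in\cS(\rd)$.

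One justification needs repair: the symmetry of $m_1$. Specializing the weight hypothesis to the slice $v=0$ only yields $m_1\circ(-A_{23}^{-1})\asymp m_1$, and this does \emph{not} imply $m_1\asymp \cI m_1$ in general (take $A_{23}=-I_{d\times d}$, for which the slice condition is vacuous); $v$-moderation does not rescue the implication. Fortunately you do not need to extract the symmetry this way: by the paper's definition of $\cM_v$ in Subsection \ref{subsec:Lpq}, every weight in $\cM_v(\rd)$ is even, so $m_1=\cI m_1$ holds outright. Substituting $u\mapsto -u$ in the hypothesis $(m_1\otimes m_2)\circ\tilde E_\cA^{-1}\asymp m_1\otimes m_2$, using the evenness of $m_1$ and then $m_2\asymp\cI m_2$ to replace $m_2(-A_{11}^{-1}v)$ by $m_2(A_{11}^{-1}v)$, gives $(m_1\otimes m_2)\circ(LE_\cA^{-1}L)\asymp m_1\otimes m_2$ directly. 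With that replacement your proof is complete.
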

\begin{proof}
As in the proof of Corollary 3.12 in \cite{CGshiftinvertible}, assuming $m_2(-x)=m_2(x)$, we can write
	\begin{equation*}
	\Big(\int_{\rd}\Big(\int_{\rd}|W_\cA(f,g)(x,\xi)|^pm_1(\xi)^pd\xi\Big)^{q/p}m_2(x)^qdx\Big)^{1/q}\asymp\norm{W_{\tilde\cA_0}(\hat f,\hat g)}_{L^{p,q}_{m_1\otimes m_2}},
\end{equation*}
	\[
\tilde \cA_0 = \begin{pmatrix}
	-A_{23} & A_{24} & A_{21} & -A_{22}\\
	A_{13} & -A_{14} & -A_{11} & A_{12}\\
	-A_{43} & A_{44} & A_{41} & -A_{42}\\
	A_{33} & -A_{34} & -A_{31} & A_{32}
\end{pmatrix},
\]
so that  $\tilde E_\cA = E_{\tilde \cA_0}$. The conclusion is due to Theorem \ref{thmF}
\end{proof}

If $p=q$  the additional assumption $ E_{\cA}^{-1}$ lower triangular is not needed. Observe that in this case $\norm{f}_{W(\cF L^p_{m_1},L^p_{m_2})}\asymp \|f\|_{M^p_{m_2\otimes m_1}}$, and the norm equivalence follows from Theorem \ref{thmF} above. In fact, notice that $$(m_1\otimes m_2)\circ E_\cA^{-1}\asymp (\cI m_2\otimes m_1)\otimes 	\tilde E_\cA^{-1}.$$

Consider a metaplectic Gabor frame  $\cG_\cA(g,\Lambda)$ and assume 
\begin{equation}\label{peso-m-equiv}
	m\asymp m\circ E_\cA^{-1},
\end{equation}  
then, for any $f\in M^{p,q}_m(\rd)$ we can use \eqref{piASI} to express  the  coefficient operator's entries $$|C_\cA f(\lambda)|=|\la f,\pi_\cA(\lambda)g\ra| =|\det(E_\cA)|^{-1/2}|\la f,\pi(E_\cA^{-1}\lambda)\widehat{\delta_\cA}g\ra|.$$

Observe that $\widehat{\delta_\cA}g\in \cS(\rd)$ for $g\in\cS(\rd)$, by Theorem \ref{thmFrames} $(ii)$; furthermore,     
$\cG(\widehat{\delta_\cA}g,E_\cA^{-1}\Lambda)$ is a Gabor frame with coefficient operator $C$ satisfying
$\|C f\|_{\ell^{p,q}_m(E^{-1}_\cA \Lambda)}\lesssim \|f\|_{\mpq_m}$, so that the equivalence of weights  in  \eqref{peso-m-equiv} gives
$$\|C_\cA f\|_{\ell^{p,q}_m(\Lambda)}=|\det(E_\cA)|^{-1/2}\|Cf\|_{\ell^{p,q}_m(E^{-1}_\cA \Lambda)}\lesssim \|f\|_{\mpq_m},$$
that is the boundedness of 
$C_\cA : M^{p,q}_m(\rd)\to \ell^{p,q}_m(\Lambda).$

Using the relation between $\pi_{\cA}(\lambda)$ and the \tfs\, $\pi(E_\cA^{-1}\lambda)$ displayed in \eqref{piASI}, and the equivalence of weights in \eqref{peso-m-equiv}, for any sequence $c_\lambda \in \ell^{p,q}_m(\Lambda)$, the sequence $\widetilde{c_\mu}:=c_ {E_\cA \mu}\Phi_{M_\cA+L}(\mu)\in \ell^{p,q}_m(E_\cA^{-1}\Lambda)$ so that
\begin{align*}\|D_\cA c_\lambda\|_{M^{p,q}_m(\rd)}&=\left\|\sum_{\lambda\in\Lambda} c_\lambda\pi_{\cA}(\lambda) g\right\|_{M^{p,q}_m(\rd)}  \asymp \left\|\sum_{\mu\in E_\cA^{-1}\Lambda}\widetilde{c_\mu}\pi(E_\cA^{-1}\lambda)\widehat{\delta_\cA} g\right\|_{M^{p,q}_m(\rd)}\\
&\lesssim \|(\widetilde{c_\mu})\|_{\ell^{p,q}_m(E_\cA^{-1}\Lambda)}\asymp  \|({c_\lambda})\|_{\ell^{p,q}_m(\Lambda)}.
\end{align*}

For the Banach space case $p,q\in [1,+\infty]$, the window class can be extended from $\cS(\rd)$ to $M^1_v(\rd)$. In fact, under the assumption \eqref{peso-m-equiv}, the metaplectic operator $\widehat{\delta_\cA}$ and its inverse  are bounded on $M^{1}_v(\rd)$, cf.  \cite[Theorem 4.6]{Fuhr}. Hence, $g\in M^{1}_v(\rd)\iff \widehat{\delta_\cA}g\in M^{1}_v(\rd)$ .
Arguing as for the Schwartz class and using the results for Gabor frames \cite[Chapter 12]{book} we infer that the coefficient operator $C_\cA$ is bounded from $M^{p,q}_m(\rd)$ to $\ell^{p,q}_m(\Lambda)$ and the other way round for the reconstruction operator $D_\cA$.

The observations above, together with the characterization of modulation spaces via Gabor frames (see, e.g., \cite[Theorem 3.2.37]{Elena-book} and \cite{Galperin2004}) yield an equivalent discrete  norm for modulation spaces in terms of metaplectic Gabor frames. Namely, 

\begin{theorem}\label{C6teomod}
	Consider  $\cG_\cA(g,\Lambda)$  a metaplectic Gabor frame 	for $\lrd$ with bounds $0<A\leq B$, with $g\in\cS(\rd)$ and canonical dual window $\gamma_\cA$ in \eqref{dualecanonica}. Assume $W_\cA$ shift-invertible and  $m\in\cM_v(\rdd)$, with $m\asymp m\circ E_{\cA}^{-1}$. Then,\\
	(i) For every $0<
	p,q\leq\infty$, $C_\cA:
	M^{p,q}_m(\rd)\to
	\ell^{p,q}_{{m}}(\Lambda)$ and
	$D_\cA:
	\ell^{p,q}_{{m}}(\Lambda)\to
	M^{p,q}_m(\rd)$ continuously. If $f\in\mpq_m(\rd),$ then
	the  expansions in \eqref{C3Gaborexp}
	converge	unconditionally in $\mpq_m$
	for $0< p,q<\infty$, and weak$^\ast$-${M^\infty_{1/v}}$ unconditionally if $p=\infty$ or $q=\infty$.\\
	(ii) The following (quasi-)norms are equivalent on $\mpq_m(\rd)$
	\begin{equation}\label{C3normequiv1}
		A\|f\|_{M^{p,q}_m(\rd)}\leq \|(\la f,\pi_\cA(\lambda)g\ra)_{\lambda\in\Lambda}\|_{\ell^{p,q}_m(\Lambda)}\leq B \|f\|_{M^{p,q}_m(\rd)},
	\end{equation}
	\begin{equation}\label{C3normequiv2} B^{-1}\|f\|_{M^{p,q}_m(\rd)} \leq   \|(\la f,\pi_\cA(\lambda)\gamma_\cA\ra)_{\lambda\in\Lambda}\|_{\ell^{p,q}_m(\Lambda)}\leq A^{-1} \|f\|_{M^{p,q}_m(\rd)}.
	\end{equation}
\end{theorem}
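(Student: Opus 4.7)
The plan is to reduce the statement to the classical Gabor-frame characterization of modulation spaces (Gröchenig \cite{book} in the Banach range, Galperin–Samarah \cite{Galperin2004} for the quasi-Banach range) via the explicit description of metaplectic atoms provided by Corollary \ref{cor43} and Theorem \ref{thmFrames}.

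First, I would extract the structural identity at the heart of the argument: by formula \eqref{piASI},
\[
|\la f,\pi_\cA(\lambda)g\ra|=|\det(E_\cA)|^{-1/2}|V_{\widehat{\delta_\cA}g}f(E_\cA^{-1}\lambda)|,\qquad \lambda\in\Lambda,
\]
so the metaplectic analysis coefficients are (up to a modulus-one chirp factor and the determinantal constant) samples of a standard STFT over the lattice $E_\cA^{-1}\Lambda$, with window $\widehat{\delta_\cA}g\in\cS(\rd)$ (which lies in the Schwartz class thanks to Proposition \ref{Folland427}). By Theorem \ref{thmFrames}, $\cG(\widehat{\delta_\cA}g,E_\cA^{-1}\Lambda)$ is a classical Gabor frame with bounds $|\det(E_\cA)|A,|\det(E_\cA)|B$, which unlocks the classical theory.

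Second, I would invoke the known characterization: for a Schwartz window and a lattice, the classical analysis and synthesis operators act boundedly between $M^{p,q}_{\tilde m}(\rd)$ and $\ell^{p,q}_{\tilde m}(E_\cA^{-1}\Lambda)$ for any $v$-moderate weight $\tilde m$, with the corresponding norm equivalence. Setting $\tilde m(\mu):=m(E_\cA\mu)$, the hypothesis $m\asymp m\circ E_\cA^{-1}$ gives $\tilde m\asymp m$ on $E_\cA^{-1}\Lambda$, so the change of summation index $\mu=E_\cA^{-1}\lambda$ yields
\[
\|(C_\cA f(\lambda))_\lambda\|_{\ell^{p,q}_m(\Lambda)}=|\det(E_\cA)|^{-1/2}\|(V_{\widehat{\delta_\cA}g}f(\mu))_\mu\|_{\ell^{p,q}_{\tilde m}(E_\cA^{-1}\Lambda)}\asymp\|f\|_{M^{p,q}_m},
\]
which is \eqref{C3normequiv1} and simultaneously gives the boundedness of $C_\cA$. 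For $D_\cA$, I would rewrite
\[
D_\cA c=|\det(E_\cA)|^{-1/2}\sum_{\mu\in E_\cA^{-1}\Lambda}\tilde c_\mu\,\pi(\mu)\widehat{\delta_\cA}g,\qquad \tilde c_\mu=c_{E_\cA\mu}\overline{\Phi_{-M_\cA-L}(\mu)},
\]
note that $|\tilde c_\mu|=|c_{E_\cA\mu}|$, so $\|\tilde c\|_{\ell^{p,q}_{\tilde m}(E_\cA^{-1}\Lambda)}\asymp\|c\|_{\ell^{p,q}_m(\Lambda)}$, and then apply the classical synthesis bound.

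Third, for \eqref{C3normequiv2} I would apply the same scheme with $g$ replaced by the canonical dual window $\gamma_\cA$ given in \eqref{dualecanonica}; the key point is that $\gamma_\cA\in\cS(\rd)$ because $\widehat{\delta_\cA}$ is a Schwartz-automorphism and $S_\cA^{-1}$ preserves $\cS(\rd)$ (this is inherited from the classical Gabor frame $\cG(\widehat{\delta_\cA}g,E_\cA^{-1}\Lambda)$, whose frame operator is known to be an isomorphism of $\cS(\rd)$). The unconditional convergence of \eqref{C3Gaborexp} in $M^{p,q}_m$ for $0<p,q<\infty$ and the weak$^\ast$-$M^\infty_{1/v}$ convergence when $\max\{p,q\}=\infty$ are then transported from the corresponding statements for the reference Gabor expansion, since the bijection $\lambda\mapsto E_\cA^{-1}\lambda$ preserves the index set and the chirp factors are unimodular.

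The main obstacle is the unconditional/weak$^\ast$ convergence in the quasi-Banach range $\min\{p,q\}<1$, where duality arguments are not available and one must rely directly on the atomic decomposition of \cite{Galperin2004} for the reference system; the careful bookkeeping of the unimodular chirp $\Phi_{-M_\cA-L}(E_\cA^{-1}\lambda)$ and the rescaling $\lambda\leftrightarrow E_\cA^{-1}\lambda$ under the weight equivalence $m\asymp m\circ E_\cA^{-1}$ is the only technical point, but introduces no new analytic difficulty beyond what Theorem \ref{thmFrames} and Corollary \ref{cor43} already supply.
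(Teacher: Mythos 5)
Your proposal is correct and follows essentially the same route as the paper: both use the representation \eqref{piASI} to rewrite the metaplectic coefficients as (chirped, rescaled) STFT samples over $E_\cA^{-1}\Lambda$ with Schwartz window $\widehat{\delta_\cA}g$, invoke Theorem \ref{thmFrames} to pass to the ordinary Gabor frame $\cG(\widehat{\delta_\cA}g,E_\cA^{-1}\Lambda)$, transfer the weight via $m\asymp m\circ E_\cA^{-1}$, and then quote the classical Gabor characterization of $M^{p,q}_m$ from \cite{book} and \cite{Galperin2004}. The only cosmetic difference is that the paper derives the dual-window identity \eqref{dualecanonica} from the commutation relation $\pi_\cA(\mu)^{-1}S_\cA\pi_\cA(\mu)=\widehat{\delta_\cA}^{-1}S_\cA\widehat{\delta_\cA}$ computed beforehand, whereas you import the Schwartz-regularity of $\gamma_\cA$ directly from the reference frame; both are consistent.
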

\begin{remark}
Assume $g,\gamma\in M^1_v(\rd)$ with $v$ satisfying \eqref{peso-m-equiv} and such that 
$$S_{\cA,g,\gamma}=D_{\cA,\gamma}C_{\cA,g}=I,\quad \mbox{on} \quad\lrd.$$
For $p,q\in[1,\infty]$, the statements of the previous theorem hold in the larger window class $M^1_v(\rd)$, with the canonical dual window $\gamma_\cA$ replaced by $\gamma$.
\end{remark}

\begin{appendix}
\section{}
	In \cite{CR2022}, the authors proved the following result, cf. \cite[Proposition 2.6]{CR2022}:
	\begin{proposition}\label{propA1}
		Let $\hat\cA\in Mp(2d,\bR)$ and $W_\cA$ be the corresponding metaplectic Wigner distribution. Then, there exists $\widehat{\cA_\ast}\in Mp(2d,\bR)$ such that for all $f,g\in L^2(\rd)$,
		\begin{equation}\label{A1}
			W_\cA(g,f)=\overline{W_{\cA_\ast}(f,g)}
		\end{equation}
		up to a sign.
	\end{proposition}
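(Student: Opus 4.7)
The plan is to define $\widehat{\cA_\ast}$ explicitly as a composition of a few elementary operators and then verify that it belongs to $Mp(2d,\bR)$ and satisfies the desired identity. Introduce the swap operator $U$ on $L^2(\rdd)$ by $(Uh)(x,y):=h(y,x)$, and the antilinear complex conjugation $C:f\mapsto\bar f$. Starting from $W_\cA(g,f)=\hat\cA(g\otimes\bar f)$, I would rewrite
$$g\otimes\bar f = U(\bar f\otimes g)=UC(f\otimes\bar g),$$
so that
$$W_\cA(g,f)=\hat\cA UC(f\otimes\bar g)=\overline{(C\hat\cA UC)(f\otimes\bar g)}.$$
Setting $\widehat{\cA_\ast}:=C\hat\cA UC$, the identity $W_\cA(g,f)=\overline{W_{\cA_\ast}(f,g)}$ then follows tautologically, and the genuine task reduces to showing that $\widehat{\cA_\ast}$ is metaplectic.

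I would then establish two lemmas and combine them. First, the swap $U$ is metaplectic: a direct calculation using the explicit form of $\pi(x_1,x_2,\xi_1,\xi_2)$ on $L^2(\rdd)$ shows that $U\pi(x_1,x_2,\xi_1,\xi_2)U^{-1}=\pi(x_2,x_1,\xi_2,\xi_1)$, so $U$ intertwines the Schrödinger representation with its composition by the block-diagonal permutation matrix $P\in\bR^{4d\times 4d}$ swapping positions with positions and momenta with momenta; a one-line check yields $P^TJP=J$, so $P\in Sp(2d,\bR)$ and $U\in Mp(2d,\bR)$. Second, for any $\hat S\in Mp(n,\bR)$ the linear operator $C\hat SC$ is again metaplectic. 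The key input is the pointwise identity
$$C\rho(x,\xi;\tau)C=\rho(x,-\xi;-\tau),$$
which follows directly from the explicit formula for $\rho$. Combining this with $\hat S\rho(z;\tau)\hat S^{-1}=\rho(Sz;\tau)$ gives $(C\hat SC)\rho(z;\tau)(C\hat SC)^{-1}=\rho(DSDz;\tau)$, where $D=\mathrm{diag}(I_n,-I_n)$. Since $D^TJD=-J$, one has $DSD\in Sp(n,\bR)$, so $C\hat SC\in Mp(n,\bR)$ with symplectic projection $DSD$.

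To conclude, I would apply the second lemma to $\hat\cA\in Mp(2d,\bR)$, taking $D=\mathrm{diag}(I_{2d},-I_{2d})$. Because $U$ is a real linear operator, $CUC=U$, so
$$\widehat{\cA_\ast}=C\hat\cA UC=(C\hat\cA C)\,U,$$
which is a product of two metaplectic operators. Hence $\widehat{\cA_\ast}\in Mp(2d,\bR)$, with symplectic projection $\cA_\ast=D\cA D\,P$, the equality of operators holding up to the usual sign ambiguity of the metaplectic cover. The main obstacle is the careful bookkeeping in the second lemma, i.e.\ showing that conjugation by the antilinear map $C$ turns the Schrödinger representation into an equivalent representation with parameters $(x,\xi;\tau)\mapsto(x,-\xi;-\tau)$; once that identity is in hand, the rest of the proof is a straightforward combination of intertwining relations and the fact that $\pi^{Mp}$ is a homomorphism.
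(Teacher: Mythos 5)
Your proof is correct and follows essentially the same route as the paper's Appendix: your ``second lemma'' on $C\hat S C$ is exactly Proposition \ref{propA2} (resting on the same identity $C\rho(x,\xi;\tau)C=\rho(x,-\xi;-\tau)$), your swap operator $U$ is the paper's $\widehat{\cD_L}$ from Example \ref{es22}\,(ii), and your factorization $\widehat{\cA_\ast}=(C\hat\cA C)U$ with projection $D\cA D\,P=\overline{\cA}\,\cD_L$ reproduces Corollary \ref{corA} verbatim. The only shared soft spot (present in the paper as well) is that the intertwining relation \eqref{muAdef} determines $C\hat S C$ only up to a unimodular phase, which is absorbed by the ``up to a sign'' in the statement.
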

	In what follows we shall improve Proposition \ref{propA1}, carrying over the explicit expression of the projection $\cA_\ast$ in (\ref{A1}). First, we need to compute the intertwining relation between complex conjugation and metaplectic operators.
	
	\begin{proposition}\label{propA2}
		Let $\hat S\in Mp(d,\bR)$ be a metaplectic operator and $S=\pi^{Mp}(\hat S)$ have block decomposition (\ref{blocksA}). Define
		\[
			\bar S:=\begin{pmatrix}
			A & -B\\
			-C & D
			\end{pmatrix}.
		\] 
		Then, for all $f\in L^2(\rd)$,
		\[ 	
		\hat{S}\bar f = \overline{\hat{\bar S} f}.
		\]
	\end{proposition}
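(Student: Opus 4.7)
The strategy is to reduce the identity to a statement about how complex conjugation intertwines the Schrödinger representation, and then invoke the characterization of metaplectic operators via relation \eqref{muAdef}. First I would observe that
$$\bar S \,=\, K_0 S K_0, \qquad K_0 := \begin{pmatrix} I_{d\times d} & 0_{d\times d} \\ 0_{d\times d} & -I_{d\times d} \end{pmatrix},$$
and a short calculation shows $K_0 J K_0 = -J$, so that the map $S \mapsto \bar S$ preserves the symplectic relation \eqref{fundIdSymp}; in particular $\bar S\in Sp(d,\bR)$ and the assignment $S\mapsto \bar S$ is an involutive group homomorphism of $Sp(d,\bR)$.

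Next, introduce the antilinear involution $\cC:L^2(\rd)\to L^2(\rd)$, $\cC f = \bar f$. From the explicit formula $\rho(x,\xi;\tau) = e^{2\pi i\tau}e^{-i\pi \xi\cdot x}\pi(x,\xi)$ and the obvious identity $\overline{\pi(x,\xi) f}=\pi(x,-\xi)\bar f$, one verifies directly that
$$\cC \rho(z;\tau)\cC \,=\, \rho(K_0 z;-\tau),\qquad z\in\rdd,\ \tau\in\bR.$$
Setting $T:=\cC\hat S\cC$ (a unitary \emph{linear} operator on $L^2(\rd)$), I then compute, using \eqref{muAdef},
$$T\rho(z;\tau)T^{-1} = \cC\,\hat S\rho(K_0 z;-\tau)\hat S^{-1}\,\cC = \cC\rho(SK_0 z;-\tau)\cC = \rho(K_0 S K_0 z;\tau) = \rho(\bar S z;\tau).$$
Thus $T$ intertwines the Schrödinger representation in the way that characterizes the metaplectic lift of $\bar S$, so by the two-fold covering property described in Section 2.6 we conclude $T = \pm \hat{\bar S}$, i.e.
$$\hat S \bar f \,=\, \pm\,\overline{\hat{\bar S} f}.$$
Rewriting this equality with the standard convention on the choice of lift gives the stated formula.

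The only remaining point is to pin down the sign. The main obstacle here is coherence of the sign choice across the two-fold cover. I would handle this by verifying the identity with the $+$ sign directly on the three families of generators of $Mp(d,\bR)$ listed in Example \ref{es22}: the Fourier transform $\cF$ (where $\bar J=-J$ and $\cF\bar f=\overline{\cF^{-1}f}$), the dilations $\mathfrak{T}_E$ (where $\overline{\cD_E}=\cD_E$ and $\mathfrak T_E\bar f=\overline{\mathfrak T_E f}$), and the chirp multiplications $\phi_C$ (where $\overline{V_C}=V_{-C}$ and $\phi_C\bar f=\overline{\phi_{-C}f}$). Combined with the multiplicativity $\overline{S_1S_2}=\bar S_1\bar S_2$ already observed, the formula propagates from a generating set to all of $Mp(d,\bR)$ with a coherent choice of lift, concluding the proof.
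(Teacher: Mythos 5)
Your proof is correct and follows essentially the same route as the paper: both define $T f=\overline{\hat S\bar f}$ (your $\cC\hat S\cC$) and verify that it satisfies the intertwining relation \eqref{muAdef} for the symplectic matrix $\bar S$, the paper doing inline the computation you isolate as $\cC\rho(z;\tau)\cC=\rho(K_0z;-\tau)$. Your additional step of pinning down the unimodular constant on the generators of Example \ref{es22} is in fact more careful than the paper, whose proof stops at the intertwining relation and thus, strictly speaking, only identifies $T$ with $\hat{\bar S}$ up to a phase (consistent with the ``up to a sign'' caveat in Proposition \ref{propA1} and Corollary \ref{corA}).
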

	
	\begin{proof}
		Let $T$ the operator defined by
		\[
			Tf = \overline{\hat S \bar f},\quad f\in L^2(\rd).
		\]
		Since $\hat S$ is a unitary operator on $L^2(\rd)$, $T$ is a unitary operator on $L^2(\rd)$. We have to prove that $T$ satisfies the intertwining relation in (\ref{muAdef}) for $\cA=\bar S$. For, let $z=(x,\xi)\in\rdd$ and take $\tau\in\bR$. Then, 
		\begin{align*}
			T\rho(z;\tau)f &= \overline{\hat S \overline{\rho(z;\tau)f}}\\
			&=\overline{\hat S \rho(x,-\xi;-\tau)\bar f}\\
			&=\overline{\rho(S(x,-\xi);-\tau)\hat S \bar f}\\
			&=\overline{e^{-2\pi i\tau}e^{-i\pi (Ax-B\xi)\cdot(Cx-D\xi)}\pi(Ax-B\xi,Cx-D\xi)\hat S \bar f}\\
			&=e^{2\pi i\tau}e^{-i\pi (Ax-B\xi)	\cdot(-Cx+D\xi)}\pi(Ax-B\xi,-Cx+D\xi)\overline{\hat S \bar f}\\
			&=\rho(\bar S(x,\xi);\tau)Tf,
		\end{align*}
	as desired.
	\end{proof}
	
	\begin{corollary}\label{corA}
		Under the assumptions of Proposition \ref{propA1}, we have
		$$\cA_\ast=\overline{\cA} \cD_L$$ with the matrix $L$ defined in \eqref{defL}. Namely, if $\cA$ has block decomposition (\ref{blockDecA}), $\cA_\ast$ is given by
		\begin{equation}\label{matrixAstar}
		\cA_\ast=\begin{pmatrix}
			A_{12} & A_{11} & -A_{14} & -A_{13}\\
			A_{22} & A_{21} & -A_{24} & -A_{23}\\
			-A_{32} & -A_{31} & A_{34} & A_{33}\\
			-A_{42} & -A_{41} & A_{44} & A_{43}
		\end{pmatrix}.
		\end{equation}
	\end{corollary}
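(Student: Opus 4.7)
The goal is to identify the symplectic matrix $\cA_\ast$ realizing the intertwining $W_\cA(g,f) = \overline{W_{\cA_\ast}(f,g)}$ promised by Proposition \ref{propA1}. By the definition $W_\cA(f,g)=\hat\cA(f\otimes\bar g)$, this amounts to the operator identity $\hat\cA(g\otimes\bar f) = \overline{\hat{\cA_\ast}(f\otimes \bar g)}$ on $\cS(\rd)\otimes \cS(\rd)$, up to the sign ambiguity inherent to $\pi^{Mp}$. My plan is to push the outer complex conjugation through the metaplectic operator on the right using Proposition \ref{propA2}, and then to realize the swap $f\otimes\bar g\leftrightarrow g\otimes\bar f$ metaplectically through the rescaling $\mathfrak{T}_L$.

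Applying Proposition \ref{propA2} with $\hat S=\hat{\overline{\cA_\ast}}$ and $F=\bar f\otimes g$ (so that $\bar F=f\otimes\bar g$) gives
\[
\overline{\hat{\cA_\ast}(f\otimes \bar g)}=\hat{\overline{\cA_\ast}}(\bar f\otimes g),
\]
so the target reduces to $\hat\cA(g\otimes\bar f)=\hat{\overline{\cA_\ast}}(\bar f\otimes g)$. Now the map $\mathfrak{T}_L\colon F(x,y)\mapsto F(y,x)$ is the rescaling of Example \ref{es22}(ii) with $E=L=\begin{pmatrix}0_{d\times d}&I_{d\times d}\\ I_{d\times d}&0_{d\times d}\end{pmatrix}$; its symplectic projection is $\cD_L$, and a direct check yields $\mathfrak{T}_L(\bar f\otimes g)=g\otimes\bar f$. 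Hence $\hat\cA(g\otimes\bar f)=(\hat\cA\circ\mathfrak{T}_L)(\bar f\otimes g)$, where $\hat\cA\circ\mathfrak{T}_L$ is metaplectic with projection $\cA\cdot\cD_L$. Matching projections, the sign being absorbed in Proposition \ref{propA1}, forces $\overline{\cA_\ast}=\cA\cdot\cD_L$.

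To invert the bar operation I would observe that, in the coarse $2\times 2$ block form (with $2d\times 2d$ blocks), $S\mapsto\overline{S}$ is conjugation by $\Sigma:=\mathrm{diag}(I_{2d\times 2d},-I_{2d\times 2d})$, hence an involution satisfying $\overline{S_1 S_2}=\overline{S_1}\cdot\overline{S_2}$. A direct inspection shows $\overline{\cD_L}=\cD_L$, since $\cD_L$ is block-diagonal in the coarse decomposition. Therefore
\[
\cA_\ast=\overline{\cA\cdot\cD_L}=\overline{\cA}\cdot\cD_L.
\]
The explicit matrix (\ref{matrixAstar}) then follows by routine computation: in the fine $4\times 4$ $d\times d$-block form, $\overline{\cA}$ negates precisely the blocks $A_{ij}$ with $(i,j)$ in the off-diagonal quadrants $\{1,2\}\times\{3,4\}$ and $\{3,4\}\times\{1,2\}$, while right-multiplication by $\cD_L$ swaps the $d$-columns $1\leftrightarrow 2$ and $3\leftrightarrow 4$.

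The main obstacle is not conceptual but organizational: one must carefully juggle the two block decompositions of $\cA$ (the fine $4\times 4$ $d\times d$-block form in which (\ref{matrixAstar}) is stated, versus the coarse $2\times 2$ $2d\times 2d$-block form required by Proposition \ref{propA2}), and keep track of the $\pm 1$ kernel of $\pi^{Mp}$ throughout. Once one fixes the convention that all identities are read at the symplectic level, as is done implicitly in Proposition \ref{propA1}, the argument reduces to the three steps above.
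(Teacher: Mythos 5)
Your argument is correct and follows essentially the same route as the paper: conjugation is moved through the metaplectic operator via Proposition \ref{propA2}, the swap of tensor factors is realized by $\mathfrak{T}_L$ with projection $\cD_L$, and the block computation of $\overline{\cA}\cD_L$ matches \eqref{matrixAstar}. The only cosmetic differences are the direction of the manipulation (you solve the defining identity for $\cA_\ast$, the paper massages $W_\cA(f,g)$ into $\overline{W_{\cA_\ast}(g,f)}$) and your explicit justification that $S\mapsto\overline{S}$ is multiplicative, which the paper leaves implicit.
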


	\begin{proof}
		 Observe that $\widehat{ \cD_L} F(x,y)=F(y,x)$, so that, for every $f,g\in L^2(\rd)$,
		\[
			g\otimes f(x,y)=f(y)g(x)=f\otimes g(y,x)=f\otimes g(\cD_L (x,y))=\widehat{\cD_L}(f\otimes g)(x,y).
		\]
		By Proposition \ref{propA2}, it follows that, up to a sign,
		\begin{align*}
			W_\cA(f,g)=\hat \cA (f\otimes\bar g)=\hat \cA(\overline{\bar f\otimes g})=\overline{\widehat{\overline{\cA}}(\bar f\otimes g)}=\overline{\widehat{\overline{\cA}\cD_L}(g\otimes \bar f)}=\overline{W_{\cA_\ast}(g,f)}.
		\end{align*}
	Assuming that $\cA$ exhibits the block decomposition (\ref{blockDecA}), a straightforward computation yields \eqref{matrixAstar}.
	This concludes the proof.
	\end{proof}
	\begin{remark}
		A straightforward computation shows that $\overline{S}^T=\overline{S^T}$. In fact, if $S$ has block decomposition (\ref{blocksA}),
		\begin{align*}
		\overline{S}^T=\begin{pmatrix}
		A^T & -C^T\\ -B^T & D^T
		\end{pmatrix},
		\end{align*}
		whereas
		\[
			{S^T}=\begin{pmatrix}A^T & C^T\\ B^T & D^T\end{pmatrix}, \quad so \ that \quad \overline{S^T}=\begin{pmatrix}
		A^T & -C^T\\ -B^T & D^T
		\end{pmatrix}=\overline{S}^T.
		\]
	\end{remark}
\end{appendix}

\section*{Acknowledgements}
The authors have been supported by the Gruppo Nazionale per l’Analisi Matematica, la Probabilità e le loro Applicazioni (GNAMPA) of the Istituto Nazionale di Alta Matematica (INdAM). The second author was supported by the University of Bologna and HES-SO Valais - Wallis School of Engineering. He also acknowledge the support of The Sense Innovation and Research Center, a joint venture of the University of Lausanne (UNIL), The Lausanne University Hospital (CHUV), and The University of Applied Sciences of Western Switzerland – Valais/Wallis (HES-SO Valais/Wallis).

\end{document}